\theoremstyle{definition}
  \newtheorem{theorem}{Theorem}[subsection]
  \newtheorem{proposition}[theorem]{Proposition}
  \newtheorem{lemma}[subsubsection]{Lemma}
  \newtheorem{corollary}[theorem]{Corollary}
\theoremstyle{remark} 
  \newtheorem{remark}[subsubsection]{Remark}
  \newtheorem{example}[subsubsection]{Example}
\theoremstyle{definition}
  \newtheorem{definition}[theorem]{Definition}
\renewcommand {\geq} {\geqslant}
\newcommand {\tensor}{\otimes}
\newcommand{\Spec}[1]{\mathrm{Spec}\ \!#1}
\newcommand{\ZZ}{\mathbb{Z}}
\newcommand{\betterhookarrow}{\!\xymatrix{{}\ar@{^{(}->}[r]&{}}\! }
\def\1{{\bf 1}}
\def\lra{\longrightarrow}
\newcommand{\an}[1]{{#1}^{\mathrm{an}}}
\newcommand{\ad}[1]{{#1}^{\mathrm{ad}}}
\newcommand{\Trop}[2]{\mathrm{Trop}(#1,#2)}
\newcommand{\RR}{\mathbb{R}}
\newcommand{\TT}{\mathbb{T}}
\newextarrow{\xbigtoto}{{20}{20}{20}{20}}
   {\bigRelbar\bigRelbar{\bigtwoarrowsleft\rightarrow\rightarrow}}
\DeclareSymbolFont{largesym}{OML}{cmm}{m}{it}
\DeclareMathSymbol{\nstnsmall}{0}{largesym}{"22}
\def\@tocline#1#2#3#4#5#6#7{\relax
  \ifnum #1>\c@tocdepth 
  \else
    \par \addpenalty\@secpenalty\addvspace{#2}%
    \begingroup \hyphenpenalty\@M
    \@ifempty{#4}{%
      \@tempdima\csname r@tocindent\number#1\endcsname\relax
    }{%
      \@tempdima#4\relax
    }%
    \parindent\z@ \leftskip#3\relax \advance\leftskip\@tempdima\relax
    \rightskip\@pnumwidth plus4em \parfillskip-\@pnumwidth
    #5\leavevmode\hskip-\@tempdima
      \ifcase #1
       \or\or \hskip 1em \or \hskip 2em \else \hskip 3em \fi%
      #6\nobreak\relax
    \dotfill\hbox to\@pnumwidth{\@tocpagenum{#7}}\par
    \nobreak
    \endgroup
  \fi}
\newcommand{\mono}{\!\xymatrix{{}\!\ar@{^{(}->}[r]&\!{}}\!}
\title{Introduction to adic tropicalization}
\author{Tyler Foster}
\address{{\bf Tyler Foster}\newline Department of Mathematics\\
University of Michigan\\
Ann Arbor, MI 48109, USA}
\email{tyfoster@umich.edu}
\date {\today}
\begin{document}
\maketitle

\begin{abstract}
\vskip -.25cm
This is an expository article on the adic tropicalization of algebraic varieties. We outline joint work with Sam Payne in which we put a topology and structure sheaf of local topological rings on the exploded tropicalization. The resulting object, which blends polyhedral data of the tropicalization with algebraic data of the associated initial degenerations, is called the {\em adic tropicalization}. It satisfies a theorem of the form ``Huber analytification is the limit of all adic tropicalizations." We explain this limit theorem in the present article, and illustrate connections between adic tropicalization and the curve complexes of O. Amini and M. Baker.
\end{abstract}

\tableofcontents


\section{Introduction}\label{introduction}
	
	
	
	There is a close relationship between tropical geometry and the geometry of degenerations of algebraic varieties. This relationship takes a particularly suggestive form in much of the recent work that reinterprets mirror symmetry and enumerative geometry in terms of fibrations of complex varieties over affine manifolds. \cite{KS1} \cite{Gross-Siebert} \cite{KS2} \cite{Gross}. An interesting manifestation of these ideas appears in the recent work of B. Parker \cite{Parker1} \cite{Parker2} \cite{Parker3}. Working in the paradigm of symplectic manifolds, with a view toward applications to pseudoholomorphic curve counting, Parker defines topological spaces called {\em exploded fibrations}, which he uses to construct log Gromov-Witten invariants \cite{ParkerLog1}, as required by the Gross-Siebert program. Roughly speaking, an exploded fibration is a topological space with structure semiring, which can be glued along boundary strata from pieces of the form $\big(\mathbb{C}^{\times}\big)^{n}\times P$, where $P$ is an integral polytope in $\RR^{m}$, for some $m$. For further details, see \cite[\S4]{Parker1} and \cite[\S3]{Parker2}. An illustrative example of an exploded fibration is the {\em exploded curve} given by the fibration $p:Y\lra\!\!\!\!\rightarrow B$ of topological spaces where $B\subset\RR^{2}$ is the union of the subspaces
	$$
	\scalebox{.95}{$
	B_{1}=\big\{(x,0)\in\RR^{2}:x\ge0\big\},
	\ \ \ \ 
	B_{2}=\big\{(0,y)\in\RR^{2}:y\ge0\big\},
	\ \ \ \ \mbox{and}\ \ \ \ 
	B_{3}=\big\{(x,x)\in\RR^{2}:x\le0\big\},
	$}
	$$
and where the space $Y$ is glued from pieces
	$$
	B_{1}\times\mathbb{C}^{\times}-\big((0,0),\ \!0\big),
	\ \ \ \ \ \ \ 
	B_{2}\times\mathbb{C}^{\times}-\big((0,0),\ \!1\big),
	\ \ \ \ \ \ \ \mbox{and}\ \ \ \ \ \ \ 
	B_{3}\times\mathbb{C}^{\times}-\big((0,0),\ \!\infty\big)
	$$
along an inclusion of $\big\{(0,0)\big\}\times\big(\mathbb{P}^{1}_{\mathbb{C}}-\{0,1,\infty\}\big)$ into each $B_{i}\times\mathbb{C}^{\times}$, $i=1,2,3$.
	\begin{figure}[!htb]
	$$
	\includegraphics[scale=.25]{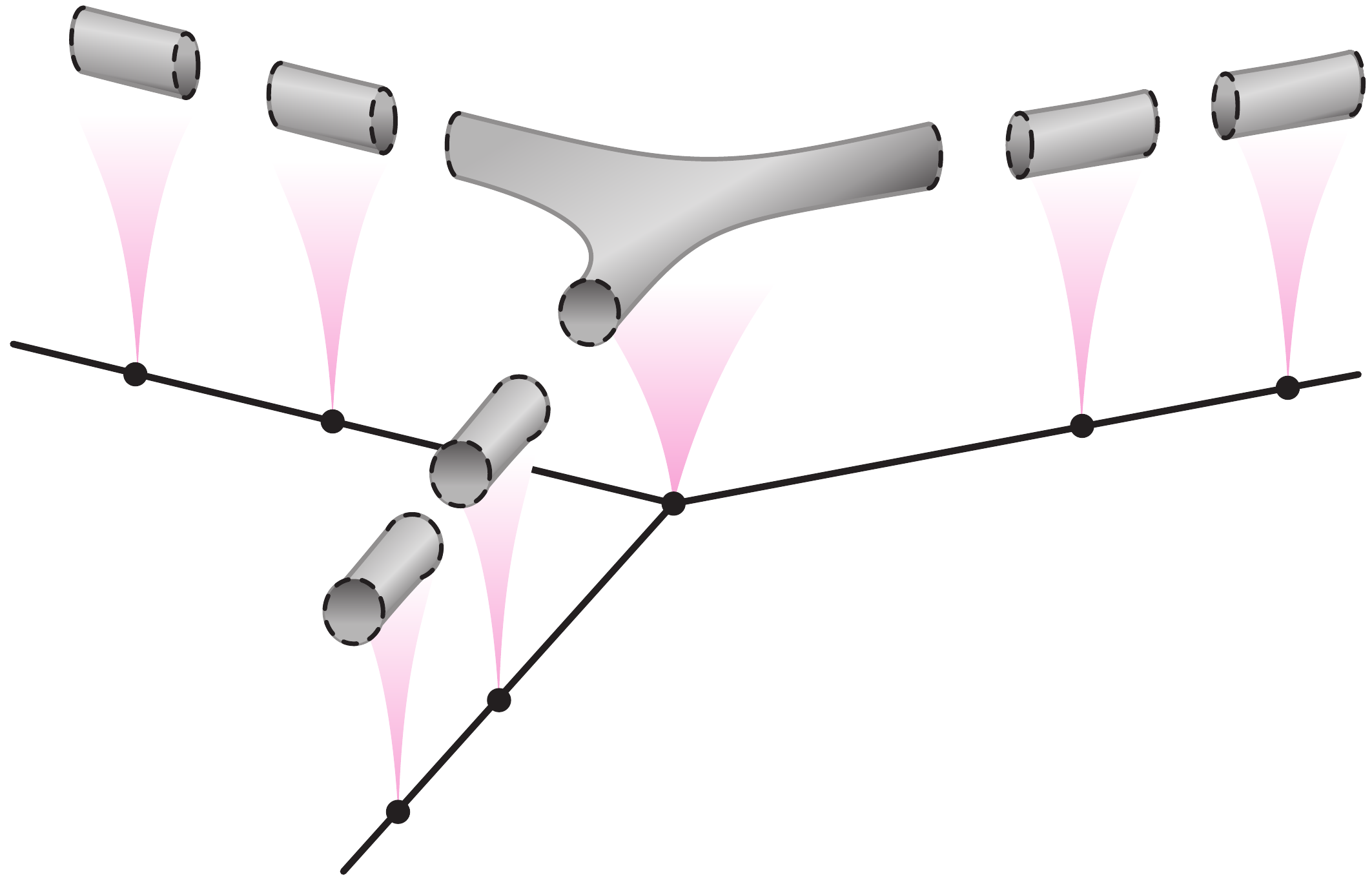}
	$$
	\caption{An example of an {\em exploded curve} in the sense of B. Parker \cite{Parker2}.}
	\label{figure: exploded curve}
	\end{figure}

	A tropical geometer recognizes the exploded curve above as a close analogue to the family of initial degenerations over all rational points in the tropicalization of an algebraic $K$-curve $X$. In more detail, let $K$ be a non-Archimedean field, which is to say that $K$ is complete with respect to a non-Archimedean absolute value $|-|:K^{\times}\lra\RR_{\ge0}$. Assume that $K$ is algebraically closed, with value group $\Gamma\overset{{}_{\mathrm{def}}}{=}-\mathrm{log}_{\ \!}|K^{\times}|\subset\mathbb{R}$. Consider an $n$-dimensional split algebraic torus $\mathbb{T}$ with character lattice $M$ and cocharacter lattice $N=\mathrm{Hom}_{\ZZ}(M,\ZZ)$. Each vector $v\in N_{\mathbb{R}}\overset{{}_{\mathrm{def}}}{=}\mathbb{R}\tensor_{\ZZ}N$ determines the {\em tilted group ring} $R[U_{v}]$, the ring generated by all those monomials $a\chi^{u}$ for which $\langle u,v\rangle-\mathrm{log}_{\ \!}|a|\geq0$. Its spectrum $T_{v}\overset{{}_{\mathrm{def}}}{=}\mathrm{Spec}_{\ \!}R[U_{v}]$ is an $R$-scheme with generic fiber $T_{v,K}\cong\mathbb{T}$. When $v\in N_{\Gamma}\overset{{}_{\mathrm{def}}}{=}\Gamma\tensor_{\ZZ}N$, this ring $R[U_{v}]$ is finitely generated, with special fiber of the same dimension as its generic fiber $\mathbb{T}$. If $X$ is any $K$-scheme, realized as a closed subvariety of the $K$-torus $\mathbb{T}$ via some closed embedding $\textit{\i}:\xymatrix{X\ar@{^{(}->}[r]&\mathbb{T}}$, then the {\em initial degeneration} of $X$ at $v$, denoted $\mathrm{in}_{v}X$, is the special fiber $(\overline{X})_{k}$ of the closure $\overline{X}$ inside $T_{v}$.
	
	The Fundamental Theorem of Tropical Geometry \cite[Theorem 3.2.4]{IntroTrop} says that the tropicalization of $X$ is the closure, in $N_{\mathbb{R}}$, of the set of those vectors $v\in N_{\Gamma}$ at which the initial degeneration $\mathrm{in}_{v}X$ is nonempty:
$$
\mathrm{Trop}(X,\textit{\i})
\ =\ 
\overline{\big\{v\in N_{\Gamma}\ \!:\ \!\mathrm{in}_{v}X\ne \emptyset \}}.
$$
This means that we can retain more information about the subvariety $X\subset\mathbb{T}$ by remembering not just the points of the tropical variety $\mathrm{Trop}(X,\textit{\i})$, but the points of all initial degenerations of $X$ at all vectors $v\in N_{\Gamma}$. Payne makes use of this set-theoretical union
	$$
	\bigsqcup_{v\in\mathrm{Trop}(X)\cap N_{\Gamma}}\!\!\!\!\!\!\!\!|\mathrm{in}_{v}X|
	$$
in order to study the fibers of the tropicalization map $X(K)\lra\mathrm{Trop}(X)$ \cite{Fiber} \cite{Fiber2}. He refers to this set as the ``exploded tropicalization," taking inspiration from Parker's exploded fibrations. In the present paper, we keep track of initial degenerations at all points of $\mathrm{Trop}(X)$, $\Gamma$-rational or not, and define the {\em exploded tropicalization} to be the union
	$$
	\mathfrak{Trop}(X)
	\ \ \overset{\mathrm{def}}{=}\!
	\bigsqcup_{v\in\mathrm{Trop}(X)}\!\!\!\!\!\!|\mathrm{in}_{v}X|.
	$$
	
	One major difference between Parker and Payne's exploded objects is the presence of both a topology {\em and} a structure sheaf (of semirings) on Parker's exploded fibrations, whereas Payne's exploded tropicalizations are mere sets. The present paper is an exposition that complements the forthcoming research paper \cite{FP}, joint with Sam Payne, in which we explain how to put both a topological structure and a structure sheaf (of topological rings) on the exploded tropicalization $\mathfrak{Trop}(X)$ of any closed subvariety $X$ of a torus $\TT$ over a non-Archimedean field $K$. Our construction turns $\mathfrak{Trop}(X)$ into a locally topologically ringed space. We denote this space $\mathrm{Ad}(X)$, and refer to it as the {\em adic tropicalization} of $X$ in $\TT$.
	
	The locally topologically ringed space $\mathrm{Ad}(X)$ interacts with Huber's adic spaces \cite{HubGen} \cite{Huber} in much the same way that tropical varieties interact with Berkovich spaces. In \cite{P09}, Payne shows that the if $X$ is a quasi-projective variety over an algebraically closed non-Archimedean field $K$, then the topological space underlying the Berkovich analytification $X^{\mathrm{an}}$ is the inverse limit of all the tropical varieties $\mathrm{Trop}(X,\textit{\i})$, taken over the inverse system formed by all closed embeddings $\textit{\i}:X\mono Y_{\Sigma}$ into quasi-projective toric varieties $Y_{\Sigma}$. In \cite{FGP}, P. Gross, S. Payne, and the author give criterion extending this result to the case of certain non-quasi-projective varieties $X$ over an algebraically closed, non-trivially valued non-Archimedean field $K$. In particular, \cite[Theorem 1.2]{FGP} $X$ says that if $X$ is a closed subvariety of a toric $K$-variety, then the topological space underlying $X^{\mathrm{an}}$ is the inverse limit of tropical varieties $\mathrm{Trop}(X,\textit{\i})$, taken over the inverse system formed by all closed embeddings $\textit{\i}:X\mono Y_{\Sigma}$ into arbitrary toric varieties $Y_{\Sigma}$. One of the central results of the forthcoming paper \cite{FP} is a theorem stating that the Huber analytification $X^{\mathrm{ad}}$ of a closed subvariety $X$ of a proper toric variety is isomorphic, as a locally topologically ringed space, to the inverse limit of the adic tropicalizations of $X$ associated to all closed embeddings of $X$ into proper toric varieties. Section \ref{Section-Main Theorem} of the present paper provides a brief exposition on this result. See Theorem \ref{Huberlimit} and Proposition \ref{Nice system, man!} below for precise statements.
	
	Because the present paper is expository, it omits most proofs. The interested reader can find all the proofs in \cite{FP}. The present paper makes up for its lack of proofs by providing concrete details and examples that do not appear in \cite{FP}. In what follows, we focus more heavily on the basic geometry of adic spaces, which underlies our geometrization of $\mathfrak{Trop}(X)$, and on concrete examples of the resulting topological spaces and their relationship to the metrized curve complexes of O. Amini and M. Baker \cite{Amini-Baker}.

\vskip .3cm

\noindent \textbf{Acknowledgments.}  The author would like to thank the Simons Foundation and the organizers of the Simons Symposium on non-Archimedean and Tropical Geometry for bringing together a fantastic group of researchers for an incredibly stimulating week of mathematics. Special thanks go to S. Payne, as this paper is a report on joint with him. The author thanks M. Baker, F. Baldassarri, W. Gubler, and D. Ranganathan for helpful remarks. Finally, the author thanks the reviewer for comments that greatly improved this paper.

The author is supported by NSF RTG grant DMS-0943832.


\vskip .75cm

\section{Preliminaries on Huber analytification}\label{section: preliminaries on Huber analytification}

	In the present \S\ref{section: preliminaries on Huber analytification}, we review the basics of Huber's theory of adic spaces and Huber's analytification functor, which produces an adic space from any $K$-scheme $X$. The original sources for this material are \cite{HubGen} and \cite{Huber}. We also strongly recommend T. Wedhorn's notes \cite{Wed} for a rather accessible introduction.
	
	The reader already well versed in the theory of adic spaces can skip ahead to \S\ref{Section-adic trop}.
	
\vskip .3cm

\noindent
{\bf Notation.}
Fix a {\em non-Archimedean} field $K$, i.e., a field complete with respect to some non-trivial, non-Archimedean valuation $v:K\lra\mathbb{R}\sqcup\{\infty\}$.
Assume that $K$ is algebraically closed. Let $R$ denote the ring of integers in $K$, let $\mathfrak{m}$ denote the unique maximal ideal in $R$, and let $k=R/\mathfrak{m}$ denote the residue field. Choose a real number $0<\varepsilon<1$ once and for all, and let
$|-|_{v}:K\lra\mathbb{R}_{\geq0}$
denote the norm $|a|_{v}=\varepsilon^{\mathrm{val}(a)}$ associated to $v$.
	
	Recall that the {\em Tate algebra}, denoted $K\langle T_{1},\dots,T_{n}\rangle$, is the subalgebra of $K[[T_{1},\dots,T_{n}]]$ consisting of those formal power series $f=\sum a_{i_{1},\dots,i_{n}}T^{m_{i_{1}}}_{i_1}\cdots T^{m_{i_{n}}}_{i_n}$ for which $|a_{i_{1},\dots,i_{n}}|_{v}\rightarrow0$ as $i_{1}+\cdots+i_{n}\rightarrow\infty$. The Tate algebra comes with the {\em Gauss\ norm} $\|-\|_{\mathrm{G}}:K\langle T_{1},\dots,T_{n}\rangle\lra\mathbb{R}_{\ge0}$, which takes $f\mapsto\|f\|_{\mathrm{G}}\overset{{}_{\mathrm{def}}}{=}\mathrm{max}|a_{i_{1},\dots,i_{n}}|_{v}$.
The Gauss norm induces a {\em quotient norm} $\|-\|_{\mathrm{q}}$ on any quotient $\mathrm{pr}:\xymatrix{K\langle T_{1},\dots,T_{n}\rangle\ar@{->>}[r]&K\langle T_{1},\dots,T_{n}\rangle/\mathfrak{a}}$, defined according to
$$
\|f\|_{\mathrm{q}}
\ \ \overset{\mathrm{def}}{=}\ \ 
\mathrm{inf}\big\{\|g\|_{\mathrm{G}}\ \!:\ \!g\in\mathrm{pr}^{-1}(f)\big\}.
$$
This quotient norm $\|-\|_{\mathrm{q}}$ gives $K\langle T_{1},\dots,T_{n}\rangle/\mathfrak{a}$ the structure of a commutative $K$-Bannach algebra. A $K$-{\em affinoid algebra} is any commutative Banach algebra $A$ admitting at least one isomorphism of $K$-Banach algebras $A\cong K\langle T_{1},\dots,T_{n}\rangle/\mathfrak{a}$. We use ``$\|-\|_{\mathrm{q}}$" to denote the norm on an arbitrary affinoid algebra $A$, even when there is no explicit mention of the presentation of $A$ that gives rise to $\|-\|_{\mathrm{q}}$.

\vskip .3cm


\subsection{Adic spectra and adic spaces}
	For an arbitrary totally ordered abelian group $\Gamma'$, written multiplicatively, let $\{0\}\sqcup\Gamma'$ denote the totally ordered abelian semigroup where $0<\gamma$ for all $\gamma\in\Gamma'$, and where $0\cdot\gamma=0$ for all $\gamma\in\{0\}\sqcup\Gamma'$. A {\em continuous seminorm} $\|-\|_{x}:A\lra\{0\}\sqcup\Gamma'$ is any homomorphism of multiplicative semigroups satisfying
$$
\|0\|_{x}=0,\ \ \ \ \ \ \|1\|_{x}=1,\ \ \ \ \ \ \mbox{and}\ \ \ \ \ \ \|a+b\|_{x}\leq\mathrm{max}(\|a\|_{x},\|b\|_{x}),
$$
such that the set
	$
	\{a\in A:\|a\|_{x}<\gamma\}
	$
is open for each $\gamma\in\Gamma'$. Two continuous seminorms
	$$
	\|-\|_{x}:A\lra\{0\}\sqcup\Gamma_{1}
	\ \ \ \ \ \ \ \mathrm{and}\ \ \ \ \ \ \ 
	\|-\|_{y}:A\lra\{0\}\sqcup\Gamma_{2}
	$$
are {\em equivalent} if there exists an inclusion $\alpha:\{0\}\sqcup\Gamma_{1}\betterhookarrow \{0\}\sqcup\Gamma_{2}$ of ordered abelian semigroups such that $\alpha{}_{{}^{\ \!\circ}}\|-\|_{x}=\|-\|_{y}$.

Let $A$ be a $K$-affinoid algebra. The norm $\|-\|_{\mathrm{q}}$ on $A$ determines the {\em power bounded subring} $A^{\circ}\overset{{}_{\mathrm{def}}}{=}\big\{f\in A:\|f\|_{\mathrm{q}}\leq1\big\}$. The {\em adic spectrum} of the pair $(A,A^{\circ})$, denoted $\mathrm{Spa}(A,A^{\circ})$, is the set of all equivalence classes of continuous seminorms $\|-\|_{x}:A\lra\{0\}\sqcup\Gamma$ such that $\|a\|_{x}\leq1$ for all $a\in A^{\circ}$. The topology on $\mathrm{Spa}(A,A^{\circ})$ is the topology generated by its {\em rational subsets}, that is, the coarsest topology containing the images of all inclusions
	$$
	\xymatrix{\mathrm{Spa}\Big(A\big\langle\!\frac{f_{1},\dots,f_{m}}{g}\!\big\rangle,\ A\big\langle\!\frac{f_{1},\dots,f_{m}}{g}\!\big\rangle^{\!\circ\ }\Big)\ \ 
\ar@{^{(}->}[r]
&
\ \ \mathrm{Spa}(A,A^{\circ})},
	$$
where $A\Big\langle\!\frac{f_{1},\dots,f_{m}}{g}\!\Big\rangle$ is any rational $K$-algebra determined by a set of elements $\{f_{1},\dots,f_{m},g\}\subset A$ generating the unit ideal in $A$ (see \cite[\S2.2.2.(ii)]{Berk} and \cite[\S8.1]{Wed}).

If $A$ is an affinoid algebra, with adic spectrum $X=\mathrm{Spa}(A,A^{\circ})$, and if $U$ is a rational subset of $X$, let $\mathscr{O}_{X}(U)$ denote a choice of rational affinoid algebra associated to $U$ as above.
These rational algebras induce a {\em structure presheaf} $\mathscr{O}_{X}$ on the adic spectrum $X$, which returns, at each open subset $U'\subset X$, the topological ring $\mathscr{O}_{X}(U')$ given by the inverse limit
$$
\mathscr{O}_{X}(U')
\ \ =\ \ 
\varprojlim_{U\subset U'} \mathscr{O}_{X}(U)
$$
taken over all rational subsets $U$ of $U'$ in $X$. The theorem \cite[Theorem 2.2]{HubGen} implies that the resulting structure presheaf $\mathscr{O}_{\!X}$ is in fact a sheaf. Furthermore, we can form the stalk $\mathscr{O}_{X,x}$ at each point $x$ in $\mathrm{Spa}(A,A^{\circ})$. This stalk is a local topological ring, and the seminorm $\|-\|_{x}$ induces a continuous seminorm on $\mathscr{O}_{X,x}$ \cite[Proposition 1.6]{HubGen}.

\subsubsection{{\bf Adic spaces.}}
	Let $\bold{LRS}$ donote the category of locally ringed spaces. By a {\em locally topologically ringed space}, we mean a locally ringed space $(Y,\mathscr{O}_{Y})$ such that $\mathscr{O}_{Y}$ comes with the structure of a sheaf of topological rings. Let $\bold{LTRS}$ denote the category of locally topologically ringed spaces.

	An {\em adic space} over $K$ is a locally topologically ringed space $(X,\mathscr{O}_{X})$ that comes equipped with a continuous seminorm $\|-\|_{x}$ on the stalk $\mathscr{O}_{X,x}$ at each point $x\in X$, and which admits an atlas $\{U_{i}\betterhookarrow X\}$ in $\bold{LTRS}$, whose charts $U_{i}$ are adic spectra of $K$-affinoid algebras, such that the seminorm $\|-\|_{x}$ on each stalk $\mathscr{O}_{X,x}=\mathscr{O}_{U_{i},x}$ coincides with the seminorm described in the previous paragraph. Note that this means, in particular, that the structure presheaf $\mathscr{O}_{X}$ on any adic space must be a sheaf. A {\em morphism} of adic spaces is any morphism $\varphi:(X,\mathscr{O}_{X})\lra(Y,\mathscr{O}_{Y})$ of locally topologically ringed spaces such that, at each point $x\in X$, the induced morphism
$$
\varphi^{\#}:\mathscr{O}_{Y,\varphi(x)}\lra\mathscr{O}_{X,x}
$$
on stalks satisfies $\|-\|_{x}{}_{{}^{\ \!\circ}}\varphi^{\#}=\|-\|_{\varphi(x)}$. We let $\bold{Adic}_{K}$ denote the category of adic spaces over $K$.

\vskip .3cm

\subsubsection{{\bf The sheaf of power bounded sections.}} 
Every adic space $(X,\mathscr{O}_{X})$ comes with a second sheaf of topological rings, called the {\em sheaf of power bounded sections}. This is the subsheaf $\mathscr{O}^{\ \!\circ}_{\!X}\subset\mathscr{O}_{X}$ that, on each open subset $U\subset X$, returns those sections $f\in\mathscr{O}_{X}(U)$ for which $\|f\|_{x}\leq1$ at every point $x\in U$.

	The sheaf of power bounded sections is closely related to the theory of degenerations of $K$-varieties to the special fiber in $\mathrm{Spec}_{\ \!}R$, and it will play an important role in our formulation of the statement that ``Huber analytification is the limit of all exploded tropicalizations" (see Theorem \ref{Huberlimit} below).

\vskip .75cm


\subsection{Huber analytification}
Fix a separated scheme $X$ of finite type over $K$. It is a space defined locally by the vanishing of algebraic functions, and has no analytic structure. The Berkovich analytification $\an{X}$ provides one way to realize an analytic structure on $X$. There is also an adic space $\ad{X}$ that we can associate to $X$, called the Huber analytification of $X$. Intuitively, the Huber analytification of $X$ is the pullback of $X$ to the category of adic spaces over $K$ along the morphism $\mathrm{Spa}(K,R)\lra\Spec{K}$ of locally ringed spaces.

To make this precise, observe that there is a forgetful functor $\mathrm{U}:\bold{Adic}_{K}\lra\bold{LRS}$ that takes a given adic space $Y$ and forgets both the seminorms on its stalks $\mathscr{O}_{Y,y}$ and the topological structure on the sheaf $\mathscr{O}_{Y}$. Let $\bold{LRS}_{K}$ denote the category of locally ringed spaces over $\Spec{K}$. There is a canonical isomorphism
	\begin{equation}\label{equation: forgetful functor}
	\mathrm{U}\big(\mathrm{Spa}(K,R)\big)\xrightarrow{\ \sim\ }\Spec{K}
	\end{equation}
of locally ringed spaces, which lets us interpret the forgetful functor $\mathrm{U}$ as a functor of the form
$$
\mathrm{U}:\bold{Adic}_{K}\lra\bold{LRS}_{K}.
$$

\vskip .3cm

\begin{definition}\label{adification}
{\bf (Huber analytification)}. If $X$ is a separated scheme of finite type over $K$, then its {\em Huber analytification}, denoted $\ad{X}$, is any adic space over $\mathrm{Spa}(K,R)$ that comes equipped with a morphism $\mathrm{U}(\ad{X})\lra X$ of locally ringed spaces over $\Spec{K}$ satisfying the following universal property:

\begin{itemize}
\item[{\bf (H)}]
If $Y$ is any adic space over $K$ that comes equipped with a map
$\varphi:\mathrm{U}(Y)\lra X$ of locally ringed spaces over $\Spec{K}$, then there exists a unique morphism $\widetilde{\varphi}:Y\lra\ad{X}$ of adic spaces over $K$ for which the diagram
$$
\ \ \ \ \ \ 
\begin{xy}
(0,0)*+{X}="1";
(-17,10)*+{\mathrm{U}(Y)}="2";
(-17,-10)*+{\mathrm{U}(\ad{X})}="3";
{\ar@{->}^{\varphi} "2"; "1"};
{\ar@{->} "3"; "1"};
{\ar@{-->}_{\mathrm{U}({\widetilde{\varphi}})\ } "2"; "3"};
\end{xy}
$$
commutes.
\end{itemize}
\end{definition}

\vskip .2cm

\begin{remark}
The universal property {\bf (H)} gives the Huber analytification the structure of a functor
	$$
	\ad{(-)}:\bold{Sch}_{K}\lra \bold{Adic}_{K}.
	$$
	
	For an explicit construction of the adic space $\ad{X}$ as a fiber product of locally ringed spaces, see \cite[\S 8.7 and Definition 8.63]{Wed}. Most relevant to our purposes is the alternate description of $\ad{X}$ as an inverse limit over admissible formal models of $\an{X}$, as established by M. van der Put and P. Schneider in \cite{Points}. To this end, we briefly review the theory of admissible formal models of $X$.
\end{remark}

\vskip .3cm

\subsubsection{{\bf Admissible formal models.}}
Recall that $\mathfrak{m}$ denotes the maximal ideal in $R$, and that $k=R/\mathfrak{m}$ denotes our residue field. The theory of admissible formal models provides us with a tool for degenerating adic spaces over $K$ to algebraic varieties over $k$. These degenerations are realized as certain formal $R$-schemes. Not all formal $R$-schemes will do. For instance, the category of formal $R$-schemes includes $k$-schemes as a full subcategory. A $k$-scheme is an example of a formal $R$-scheme that contains no information whatsoever over the generic point of $\mathrm{Spec}_{\ \!}R$. We want to excise these from our category of interest.

We say that a topological $R$-algebra $A$ is {\em admissible} if there exists an isomorphism
$$
A
\ \ \cong\ \ 
R\langle t_{1},\dots,t_{n}\rangle\big/\mathfrak{a}
$$
of topological rings, where $R\langle t_{1},\dots,t_{n}\rangle\big/\mathfrak{a}$ has its $\mathfrak{m}$-adic topology, such that:
\vskip .2cm
\begin{itemize}
\item[{\bf (i)}]
the ideal $\mathfrak{a}\subset R\langle t_{1},\dots,t_{n}\rangle$ is finitely generated;
\item[{\bf (ii)}]
\vskip .2cm
the ring $R\langle t_{1},\dots,t_{n}\rangle\big/\mathfrak{a}$ is free of $\mathfrak{m}$-torsion.
\end{itemize}
\vskip .2cm
An {\em admissible formal $R$-scheme} is any formal scheme $\mathfrak{X}$ over the formal spectrum $\mathrm{Spf}_{\ \!\!}R$ of $R$ (see \cite[Chp. II, \S 9]{Hart} for details) that admits an open covering
$\big\{\mathfrak{U}_{i}\xymatrix{{}\ar@{^{(}->}[r]&{}}\mathfrak{X}\big\}$ by formal spectra $\mathfrak{U}_{i}=\mathrm{Spf}A_{i}$, such that each $A_{i}$ is an admissible $R$-algebra.

Each admissible formal $R$-scheme $\mathfrak{X}$ has an associated adic space $(\ad{\mathfrak{X}},\mathscr{O}_{\ad{\mathfrak{X}}})$.
To construct $\ad{\mathfrak{X}}$, choose a formal affine cover
$\big\{\mathfrak{U}_{i}\betterhookarrow \mathfrak{X}\big\}$,
say with $\mathfrak{U}_{i}=\mathrm{Spf}\ \!A_{i}$, where $A_{i}$ is an admissible $R$-algbera. Then $K\tensor_{R}A_{i}$ is a $K$-affinoid algebra, and we can define
$$
\ad{\mathfrak{U}}_{i}
\ \ \overset{\mathrm{def}}{=}\ \ 
\mathrm{Spa}\big(K\tensor_{R}A_{i}\ \!,\ (K\tensor_{R}A_{i})^{\circ}\big).
$$
These adic spaces glue along the intersections
$\ad{(\mathfrak{U}_{i}\cap\mathfrak{U}_{j})}=\ad{\mathfrak{U}}_{i}\cap\ad{\mathfrak{U}}_{j}$ to produce an adic space over $K$ that we denote $\ad{\mathfrak{X}}$. The adic space that we obtain in this way is independent of our choice of covering $\big\{\mathfrak{U}_{i}\betterhookarrow \mathfrak{X}\big\}$ (see \cite[Proposition 4.1]{HubGen} for details).

\vskip .3cm

\begin{definition}\label{admissible model of a scheme}
{\bf (Admissible formal models of a scheme).}
Let $X$ be any scheme locally of finite type over $K$. An {\em admissible formal model of} $X$ is a datum consisting of an admissible formal $R$-scheme $\mathfrak{X}$ and an isomorphism $\ad{\mathfrak{X}}\xrightarrow{\ \sim\ }\ad{X}$ of adic spaces.

A {\em morphism} of admissible formal models of $X$ is a morphism $\mathfrak{X}_{1}\lra\mathfrak{X}_{2}$ whose induced morphism $\mathfrak{X}_{1}^{\mathrm{ad}}\lra\mathfrak{X}_{2}^{\mathrm{ad}}$ of $K$-analytic spaces commutes with the isomorphisms $\mathfrak{X}_{1}^{\mathrm{ad}}\xrightarrow{\ \sim\ }\ad{X}$ and $\mathfrak{X}_{2}^{\mathrm{ad}}\xrightarrow{\ \sim\ }\ad{X}$. We let $\bold{AFS}_{R}$ denote the category of admissible formal $R$-schemes, and we let $\bold{AFS}_{R}^{X}$ denote the category of admissible formal models of $X$.
\end{definition}

\vskip .3cm

\subsubsection{{\bf Specializations of adic spaces.}}\label{specializations of adic spaces}
	The adic space $\ad{\mathfrak{X}}$ associated to the formal $R$-scheme
$\mathfrak{X}$ comes with a {\em specialization morphism}
\begin{equation}\label{formal specialization}
\mathrm{sp}_{\mathfrak{X}}:(\ad{\mathfrak{X}},\mathscr{O}^{\ \!\circ}_{\!\ad{\mathfrak{X}}})\lra(\mathfrak{X},\mathscr{O}_{\mathfrak{X}})
\end{equation}
of locally topologically ringed spaces over $\mathrm{Spec}_{\ \!}K$. Recall that $\mathscr{O}^{\ \!\circ}_{\!\ad{\mathfrak{X}}}$ is the sheaf of power bounded sections on $\ad{\mathfrak{X}}$. The fact that $\mathrm{sp}_{\mathfrak{X}}$ is a morphism of locally ringed spaces is one feature of adic spaces that Berkovich spaces do {\em not} satisfy, as the reduction map $X^{\mathrm{an}}\lra\mathfrak{X}_{k}$ for Berkovich spaces fails to be continuous.

	The specialization morphism (\ref{formal specialization}) satisfies the following universal property:
\vskip .3cm
\begin{itemize}
\item[{\bf (H${}_{\bold{f}}$)}]
If $Y$ is any adic space over $K$ that comes equipped with a morphism $\varphi:(Y,\mathscr{O}^{\ \!\!\circ}_{\!Y})\lra(\mathfrak{X},\mathscr{O}_{\mathfrak{X}})$ of locally topologically ringed spaces over $\mathrm{Spec}_{\ \!}K$, then there exists a unique morphism $\widetilde{\varphi}:(Y,\mathscr{O}_{Y})\lra(\ad{\mathfrak{X}},\mathscr{O}_{\ad{\mathfrak{X}}})$ of adic spaces over $K$ for which the diagram
$$
\ \ \ \ \ \ 
\begin{xy}
(0,0)*+{(\mathfrak{X},\mathscr{O}_{\mathfrak{X}})}="1";
(-25,10)*+{(Y,\mathscr{O}^{\ \!\!\circ}_{\!Y})}="2";
(-25,-10)*+{(\ad{\mathfrak{X}},\mathscr{O}^{\ \!\!\circ}_{\!\ad{\mathfrak{X}}})}="3";
{\ar@{->}^{\varphi} "2"; "1"};
{\ar@{->}_{\mathrm{sp}_{\mathfrak{X}}} "3"; "1"};
{\ar@{-->}_{\widetilde{\varphi}\ } "2"; "3"};
\end{xy}
$$
commutes.
\end{itemize}
\vskip .3cm

\begin{remark}\label{remark: Gillam}
The universal property {\bf (H${}_{\bold{f}}$)} gives us a functor
$\ad{(-)}:\bold{AFS}_{R}\lra\bold{Adic}_{K}$
into the category of adic spaces over $K$. If $X$ is a separated finite-type $K$-scheme, then the resulting specialization morphisms
\begin{equation}\label{adic specialization}
\mathrm{sp}_{\mathfrak{X}}:(\ad{X},\mathscr{O}^{\ \!\circ}_{\!\ad{X}})\lra(\mathfrak{X},\mathscr{O}_{\mathfrak{X}}),
\end{equation}
induced by the specialization morphisms (\ref{formal specialization}), commute with morphisms in $\bold{AFS}_{R}^{X}$.

	W. D. Gillam proved in \cite[Corollary 5]{Gillam} that $\bold{LRS}$ contains all inverse limits. It is not difficult to show that one can compute any inverse limit $\varprojlim_{}(Y_{i},\mathscr{O}_{Y_{i}})$ in $\bold{LTRS}$ by first computing the inverse limit $(Y,\mathscr{O}_{Y})$ in $\bold{LRS}$ of the underlying locally ringed spaces, and then equipping each ring $\mathscr{O}_{Y}(U)$ with the finest topology for which each of the morphisms 
	\begin{equation}\label{individual restrictions}
	\mathscr{O}_{Y_{i}}(V_{i})\lra\mathscr{O}_{X}(U),
	\end{equation}
for each open subset $V_{i}\subset Y_{i}$ mapping to $U$ under $Y_{i}\lra Y$,
becomes continuous.

This means, in particular, that the inverse limit of an inverse system of formal schemes remains a topologically ringed space on which stalks are local rings. Thus the morphisms (\ref{adic specialization}) induce a morphism
\begin{equation}\label{specialization}
\mathrm{sp}:\ \!(\ad{X},\mathscr{O}^{\ \!\!\circ}_{\!\ad{X}})\ \xrightarrow{\ \ \ \ \ }\varprojlim_{\bold{AFS}_{R}^{X}}\!(\mathfrak{X},\mathscr{O}_{\mathfrak{X}})
\end{equation}
of locally topologically ringed spaces.
\end{remark}

\vskip .3cm

\begin{proposition}\label{limit}
The morphism (\ref{specialization}) is an isomorphism of locally topologically ringed spaces.
\end{proposition}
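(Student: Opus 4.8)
This proposition is the precise rendering, in the category of locally topologically ringed spaces, of the identification of $\ad{X}$ with the inverse limit of its admissible formal models recorded by van der Put and Schneider \cite{Points}, so the plan is to reduce to that input and then keep careful track of the extra topological data. The argument is local on $X$: restriction to an affine open of $X$ is compatible with both sides of (\ref{specialization}) (the category $\bold{AFS}_{R}^{X}$ restricts accordingly), so we may assume $X$ is affine. Using that any two admissible formal models of $\ad{X}$ are dominated by a common one and that admissible blowups are cofinal among such dominations (Raynaud's theory of formal models), we compute the limit in (\ref{specialization}) over a cofiltered diagram. We then must check three things: that the underlying map of topological spaces of (\ref{specialization}) is a homeomorphism; that the induced map $\mathscr{O}^{\circ}_{\ad{X}}\to\varprojlim(\mathrm{sp}_{\mathfrak{X}})_{*}\mathscr{O}_{\mathfrak{X}}$ is an isomorphism of sheaves of rings; and that the two topologies that result on the structure sheaf agree. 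By Gillam's construction of limits in $\bold{LRS}$ and the description of limits in $\bold{LTRS}$ recalled in Remark \ref{remark: Gillam}, these three statements together are exactly the assertion of the proposition.

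The homeomorphism of underlying spaces is the geometric core, and it is where the theory of admissible formal models does its work. The key fact is that every rational subset $U$ of $\ad{X}$ has the form $\mathrm{sp}_{\mathfrak{X}}^{-1}(V)$ for some admissible formal model $\mathfrak{X}$ and some open $V\subset\mathfrak{X}$; consequently the sets $\mathrm{sp}_{\mathfrak{X}}^{-1}(V)$ form a basis for the topology of $\ad{X}$ and separate its points, so the morphism (\ref{specialization}) is continuous, open onto its image, and injective. For surjectivity one takes a compatible family $(x_{\mathfrak{X}})$ of points of the formal models; since each specialization map $\mathrm{sp}_{\mathfrak{X}}$ is surjective with quasi-compact fibers and the transition maps are closed (admissible blowups are proper), the cofiltered inverse system of fibers $\mathrm{sp}_{\mathfrak{X}}^{-1}(x_{\mathfrak{X}})$ has nonempty limit, and any point of that limit maps to $(x_{\mathfrak{X}})$. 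The same analysis shows that $\varprojlim|\mathfrak{X}|$ is the underlying space of the $\bold{LRS}$-limit, since at each such point the stalk of the limit of structure presheaves is the local ring of the corresponding valuation on the affinoid sections.

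Since the $\ad{\mathfrak{U}}$ form a basis of opens, the comparison of structure sheaves reduces to comparing sections over a basic open $U=\ad{\mathfrak{U}}$ with $\mathfrak{U}=\mathrm{Spf}\,A_{0}$ an admissible affine formal model. On the right-hand side of (\ref{specialization}) these sections are $\varprojlim_{\mathfrak{U}'\to\mathfrak{U}}\mathscr{O}(\mathfrak{U}')$, the filtered union inside $K\tensor_{R}A_{0}$ of the coordinate rings of all admissible blowups of $\mathfrak{U}$, that is, of all finite $A_{0}$-subalgebras of $K\tensor_{R}A_{0}$ arising from blowups; on the left-hand side they are $(K\tensor_{R}A_{0})^{\circ}$. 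These coincide because a section of an affinoid algebra is power bounded exactly when it is integral over the image of a formal model, while conversely every finite integral $A_{0}$-subalgebra of $K\tensor_{R}A_{0}$ is the coordinate ring of an admissible blowup of $\mathfrak{U}$. Naturality in $U$ and compatibility with the specialization maps are immediate, so (\ref{specialization}) is an isomorphism in $\bold{LRS}$. For the topologies we invoke Remark \ref{remark: Gillam}: over $U=\ad{\mathfrak{U}}$ the limit structure sheaf carries the finest ring topology making each inclusion $\mathscr{O}(\mathfrak{U}')\hookrightarrow(K\tensor_{R}A_{0})^{\circ}$ continuous, where $\mathscr{O}(\mathfrak{U}')$ has its $\mathfrak{m}$-adic topology, whereas $(K\tensor_{R}A_{0})^{\circ}$ carries the subspace topology from the Banach algebra $K\tensor_{R}A_{0}$, which on this subring is $\mathfrak{m}$-adic. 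Each inclusion is $\mathfrak{m}$-adically continuous, so the finest topology refines the $\mathfrak{m}$-adic one; conversely $A_{0}$ is the coordinate ring of the trivial blowup and sits as an open subring of $(K\tensor_{R}A_{0})^{\circ}$ with its $\mathfrak{m}$-adic topology, which forces the finest topology to be no finer than the $\mathfrak{m}$-adic one. Hence the two topologies agree, and (\ref{specialization}) is an isomorphism of locally topologically ringed spaces.

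The step I expect to be the main obstacle is the homeomorphism of underlying spaces, and within it the surjectivity argument: extracting an honest point of $\ad{X}$ from a compatible system of points of the formal models, which leans on surjectivity and quasi-compactness of the specialization maps together with cofinality of admissible blowups. The identification of $\mathscr{O}^{\circ}_{\ad{X}}$ with the limit of the $\mathscr{O}_{\mathfrak{X}}$ rests on the same formal-models input (power bounded $\Leftrightarrow$ integral over a model), while the final reconciliation of topologies is a formal consequence of Remark \ref{remark: Gillam}.
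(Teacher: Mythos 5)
The paper offers no argument of its own for Proposition \ref{limit}: its entire proof is a pointer to the sketch in \cite[Theorem 2.22]{Sch}, the adic-space form of Raynaud's theorem (going back to \cite{Points} and Bosch--L\"utkebohmert) that an analytic space is the inverse limit of its admissible formal models. Your proposal is therefore not so much an alternative route as a correct reconstruction of the proof of the result the paper cites. The three pillars you isolate are the right ones: rational subsets of $\ad{X}$ arise as $\mathrm{sp}_{\mathfrak{X}}^{-1}(V)$ for admissible blowups of formal models, which gives the topological comparison; cofiltered-limit and quasi-compactness arguments give surjectivity on points; and the identification of $(K\tensor_{R}A_{0})^{\circ}$ with the filtered union of the coordinate rings of all models dominating $\mathrm{Spf}\ \!A_{0}$ (power bounded $\Leftrightarrow$ integral over a model) gives the sheaf comparison. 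The reconciliation of the two ring topologies via Remark \ref{remark: Gillam} is the only content genuinely specific to $\bold{LTRS}$ rather than $\bold{LRS}$, and you handle it correctly. Two small repairs: in the surjectivity step the fibers $\mathrm{sp}_{\mathfrak{X}}^{-1}(x_{\mathfrak{X}})$ over individual (possibly non-closed) points of $\mathfrak{X}$ need not be closed, so the compactness argument should be run with the closed sets $\mathrm{sp}_{\mathfrak{X}}^{-1}\big(\overline{\{x_{\mathfrak{X}}\}}\big)$ (or with the pro-constructible fibers in the constructible topology of the spectral space $\ad{X}$), after which one checks that a point of the nonempty intersection can be chosen mapping to each $x_{\mathfrak{X}}$ itself; and the sections of the limit structure sheaf over a basic open form a \emph{colimit} of the rings $\mathscr{O}(\mathfrak{U}')$ --- your phrase ``filtered union'' is correct, but the symbol $\varprojlim$ is not. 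Neither issue affects the validity of the argument.
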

\begin{proof}
For a sketch of the proof of Proposition \ref{limit}, see \cite[Theorem 2.22]{Sch}.
\end{proof}

\vskip .75cm


\section{The exploded tropicalization as a locally ringed space}\label{Section-adic trop}


\subsection{The combinatorics of toric degenerations}\label{subsection: The combinatorics of toric degenerations}
	In the present \S\ref{subsection: The combinatorics of toric degenerations}, we briefly review the combinatorial and algebraic geometry of toric degenerations of toric varieties. We work exclusively over the toric $k$-variety $\mathbb{A}^{1}_{k}$, where $k=R/\mathfrak{m}$ is our original residue field. The reader already familiar with the geometry of toric degenerations over $\mathbb{A}^{1}_{k}$, as developed by G. Kempf, F. F. Knudsen, D. Mumford, and B. Saint-Donat in \cite{KKMSD}, may proceed to \S\ref{subsection: algebraic Gubler models}, where we review the formalism of toric $R$-schemes introduced by W. Gubler in \cite{Gub}. The theory of toric $R$-schemes is further developed by W. Gubler and A. Soto in \cite{GS}.

	Fix a split, $n$-dimensional algebraic torus $\mathbb{T}$ over $k$, with character lattice $M$. We have a canonical isomorphism $\mathbb{T}\cong\mathrm{Spec}_{\ \!}k[M]$. Let $N\overset{{}_{\mathrm{def}}}{=}\mathrm{Hom}_{\ZZ}(M,\ZZ)$ be the cocharacter lattice, and let
	$$
	\langle-,-\rangle\ \!:\ M\times N_{\RR}\ \lra\ \RR
	$$
denote the canonical paring.
	
	If $Y_{\Sigma}$ is a proper toric variety over $K$, built from a complete $n$-dimensional fan $\Sigma$ in $N_{\RR}$, then we can freely produce degenerations of $Y_{\Sigma}$ by writing down certain $(n+1)$-dimensional fans extending $\Sigma$. Specifically, consider the $(n-1)$-dimensional half-space $N_{\RR}\times\RR_{\ge0}$, along with its projection to the second factor
	\begin{equation}\label{inset: projection to second factor}
	\mathrm{pr}_{2}\ \!:\ N_{\RR}\times\RR_{\ge0}\ \lra\!\!\!\!\rightarrow\ \RR_{\ge0}.
	\end{equation}
If $\Delta$ is any fan in $N_{\RR}\times\RR$ with support $N_{\RR}\times\RR_{\ge0}$, such that
	$$
	\Delta\ \cap\ \big(\ \!N_{\RR}\times\{0\}\ \!\big)
	\ \ \cong\ \ 
	\Sigma,
	$$
then the projection (\ref{inset: projection to second factor}) induces a morphism of fans
	\begin{equation}\label{inset: morphisms of fans}
	\mathrm{pr}_{2}\ \!:\ \Delta\ \lra\!\!\!\!\rightarrow\ \big\{0,\RR_{\ge0}\big\}
	\ \ \ \ \ \ \ \ \mbox{with}\ \ \ \ \ \ \ \ 
	\mathrm{pr}_{2}^{-1}(0)\ =\ \Sigma.
	\end{equation}
This induces a torus-equivariant morphism of toric varieties
	\begin{equation}\label{toric fibration}
	f_{\mathrm{pr}_{2}}\ \!:\ Y_{\Delta}\ \lra\!\!\!\!\rightarrow\ \mathbb{A}^{1}=\mathrm{Spec}_{\ \!}k[t].
	\end{equation}
See Figure \ref{toric degeneration} below for an example. Note that the fiber $\mathrm{pr}^{-1}_{2}(1)=\Delta\cap\big(N_{\RR}\times\{1\}\big)$ is not a fan but a polyhedral complex. We identify it with a polyhedral complex $C_{\Delta}$ in $N_{\RR}$ whose support is all of $N_{\RR}$. See Figure \ref{tropcone} below for an example.

\vskip .3cm
	
\begin{proposition}\label{proposition: how toric degenerations work}
	The map $f_{\mathrm{pr}_{2}}:Y_{\Delta}\lra\!\!\!\!\rightarrow\mathbb{A}^{1}$ induced by the projection (\ref{inset: morphisms of fans}) realizes the complex toric variety $Y_{\Delta}$ as an $\mathbb{A}^{1}$-model of the toric variety glued from $\Sigma$. More precisely:
\begin{itemize}
	\item[{\bf (i)}]\vskip .25cm
	The generic fiber of $f_{\mathrm{pr}_{2}}$ is the toric $k(t)$-variety $Y_{\Sigma}$.
	\item[{\bf (ii)}]\vskip .25cm
	The special fiber of $f_{\mathrm{pr}_{2}}$ at $t=0$ is a union of toric varieties $Y_{\mathrm{star}(v)}$ indexed by the vertices of $v$ of $C_{\Delta}$, glued along torus-invariant divisors according to the higher-dimensional face incidences appearing in $C_{\Delta}$.
\end{itemize}
\vskip .25cm
\end{proposition}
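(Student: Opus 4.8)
The plan is to reduce everything to the affine toric charts $U_\sigma=\mathrm{Spec}\,k[\sigma^\vee\cap(M\oplus\ZZ)]$, $\sigma\in\Delta$, that cover $Y_\Delta$, and to carry out the standard monoid computations for toric degenerations, following \cite{KKMSD}. On $U_\sigma$ the morphism $f_{\mathrm{pr}_2}$ is the $k$-algebra map $k[t]\to k[\sigma^\vee\cap(M\oplus\ZZ)]$ sending $t$ to $\chi^{e}$, where $e=(0,1)\in M_\RR\times\RR$ is the ``height'' functional; this is well defined precisely because the support hypothesis gives $\sigma\subseteq N_\RR\times\RR_{\ge0}$, i.e.\ $e\in\sigma^\vee$. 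Since every step below is functorial for face inclusions $\sigma_1\preceq\sigma_2$ in $\Delta$, the local statements will glue to the global ones.

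For \textbf{(i)} I would first base change along $\mathbb{G}_m=\mathrm{Spec}\,k[t,t^{-1}]\subset\mathbb{A}^1$. On the chart $U_\sigma$ this amounts to inverting $\chi^{e}$; because $e\in\sigma^\vee$ one checks that the localized monoid is $(\sigma^\vee+\RR e)\cap(M\oplus\ZZ)$, and $\sigma^\vee+\RR e=(\sigma\cap e^\perp)^\vee$. The face $\sigma\cap e^\perp=\sigma\cap(N_\RR\times\{0\})$ lies at height $0$, hence is a cone $\rho_\sigma$ of $\Sigma$ under the identification $\Delta\cap(N_\RR\times\{0\})\cong\Sigma$; and since $(\rho_\sigma\times\{0\})^\vee=\rho_\sigma^\vee\times\RR$, the localized ring is $k[\rho_\sigma^\vee\cap M]\otimes_k k[t,t^{-1}]$. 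Thus $U_\sigma\times_{\mathbb{A}^1}\mathbb{G}_m\cong U_{\rho_\sigma}\times_k\mathbb{G}_m$, compatibly with face inclusions; as $\sigma$ ranges over $\Delta$ the cones $\rho_\sigma$ range over all of $\Sigma$ with the correct incidences, so these isomorphisms glue to $Y_\Delta\times_{\mathbb{A}^1}\mathbb{G}_m\cong Y_\Sigma\times_k\mathbb{G}_m$. Base changing further along $\mathrm{Spec}\,k(t)\to\mathbb{G}_m$ identifies the generic fiber of $f_{\mathrm{pr}_2}$ with the toric $k(t)$-variety $Y_\Sigma$.

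For \textbf{(ii)} I would analyze the fiber over $t=0$, i.e.\ the closed subscheme $\{\chi^{e}=0\}\subseteq Y_\Delta$, via the orbit--cone correspondence. On the orbit $O_\tau$ the character $\chi^{e}$ is identically zero iff $e\notin\tau^\perp$, i.e.\ iff $\tau\not\subseteq N_\RR\times\{0\}$, i.e.\ iff $\mathrm{pr}_2(\tau)=\RR_{\ge0}$; so set-theoretically $f_{\mathrm{pr}_2}^{-1}(0)$ is the union of the orbit closures $\overline{O_{\sigma_v}}$ over the rays $\sigma_v=\RR_{\ge0}(v,1)$ of $\Delta$ not contained in $N_\RR\times\{0\}$, and these correspond bijectively to the vertices $v$ of $C_\Delta=\Delta\cap(N_\RR\times\{1\})$. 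Now $\overline{O_{\sigma_v}}$ is the toric variety of the star fan of $\sigma_v$ in $(N\oplus\ZZ)_\RR/\RR(v,1)$, and the canonical isomorphism $(N\oplus\ZZ)_\RR/\RR(v,1)\xrightarrow{\ \sim\ }N_\RR$, $(n,s)\mapsto n-sv$, carries it onto the fan $\mathrm{star}(v)$ of tangent cones $\mathrm{cone}(F-v)=\RR_{\ge0}(F-v)$ of the faces $F\ni v$ of $C_\Delta$; hence $\overline{O_{\sigma_v}}\cong Y_{\mathrm{star}(v)}$. Finally, for distinct vertices $v,w$ the intersection $\overline{O_{\sigma_v}}\cap\overline{O_{\sigma_w}}$ is nonempty iff $v$ and $w$ lie on a common face of $C_\Delta$, in which case it is the torus-invariant subvariety of $Y_{\mathrm{star}(v)}$ attached to the cone of $\mathrm{star}(v)$ coming from the smallest such face --- a divisor when that face is an edge, and a higher-codimension stratum for higher-dimensional faces --- which is exactly the asserted gluing pattern.

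\textbf{Main obstacle and caveat.} The substance is bookkeeping: keeping the three identifications $\Delta\cap(N_\RR\times\{0\})\cong\Sigma$, $\Delta\cap(N_\RR\times\{1\})\cong C_\Delta$, and $(N\oplus\ZZ)_\RR/\RR(v,1)\cong N_\RR$ mutually consistent, and checking that the orbit--cone dictionary transports face inclusions in $\Delta$ to face inclusions in $\Sigma$ and in $C_\Delta$ coherently enough for the affine isomorphisms to glue without introducing non-separatedness. One should also flag that the scheme-theoretic special fiber may be non-reduced: the component $\overline{O_{\sigma_v}}$ occurs with multiplicity equal to the lattice length of $(v,1)$, so the clean statement (ii) is the set-theoretic one, with the primitive case $v\in N$ giving a reduced fiber.
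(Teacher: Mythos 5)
Your proof is correct and follows essentially the same route as the paper's: part (i) by working on the affine charts and showing that inverting $t=\chi^{(0,1)}$ turns the chart semigroup $S_{\delta}$ into that of the recession cone $\delta\cap\big(N_{\RR}\times\{0\}\big)$, and part (ii) by the orbit--cone correspondence applied to the rays of $\Delta$ not lying in $N_{\RR}\times\{0\}$, which correspond to the vertices of $C_{\Delta}$. Your closing caveat --- that the scheme-theoretic fiber over $t=0$ can be non-reduced when a ray generator $(v,1)$ fails to be primitive, so that (ii) is cleanly a set-theoretic statement --- is a genuine point that the paper's proof passes over silently.
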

\begin{proof}
The complex toric variety $Y_{\Delta}$ is glued from the affine open toric varieties $U_{\delta}\overset{{}_{\mathrm{def}}}{=}\mathrm{Spec}_{\ \!}k[S_{\delta}]$, where $\delta$ runs over all cones in $\Delta$. Because $\Delta$ is a fan in $N_{\RR}\times\RR_{\ge0}$, the semigroup $S_{\delta}$ associated to $\delta$ takes the form
	\begin{equation}\label{inset: suggestive description}
	S_{\delta}
	\ \overset{\mathrm{def}}{=}\ 
	\big\{(u,n)\in M\!\times\!\ZZ:\langle u,v\rangle+rn\ge0\ \mbox{for all}\ (v,r)\in\delta\big\}.
	\end{equation}
Fix a monomial $a\ \!t^{n}\chi^{u}\in k[S_{\delta}]$, where $a\in k$ and $u\in M$. If $(u,0)\notin S_{\delta\cap(N_{\RR}\times\{0\})}$, then there exists some $(v',0)\in\delta\cap\big(N_{\RR}\times\{0\}\big)$ such that $\langle u,v'\rangle<0$. Thus, for some choice of $r\gg0$, we have $\langle u,v+rv'\rangle<0$. Because $\delta$ is closed under translation by $\delta\cap\big(N_{\RR}\times\{0\}\big)$, this implies that $u\in S_{\delta\cap(N_{\RR}\times\{0\})}$. This proves part (i) of the proposition.
	
	Part (ii) is a consequence of the orbit-cone correspondence for toric varieties \cite[\S3.2]{CLS}, upon observing that the special divisors in $Y_{\Delta}$ are exactly those codimension-$1$ orbit closures in $Y_{\Delta}$ that correspond to rays in $\Delta$ {\em not} lying in $N_{\RR}\times\{0\}$.
\end{proof}

\vskip .3cm

\begin{example}\label{example: toric degeneration}
	Let $M=\ZZ^{2}$, and let $\Sigma$ be the complete, $2$-dimensional fan in $N_{\RR}=\RR^{2}$ pictured at left in Figure \ref{tropcone} below. Then $Y_{\Sigma}=\mathbb{P}^{2}_{\mathbb{C}}$. Let $\Delta$ be the $3$-dimensional, $\ZZ$-admissible fan in $\RR^{2}\times\RR_{\ge0}$ pictured in Figure \ref{toric degeneration} below.
	\begin{figure}[!htb]
	$$
	\begin{xy}
	(0,0)*+{\ \ \ \ \ \includegraphics[scale=.3]{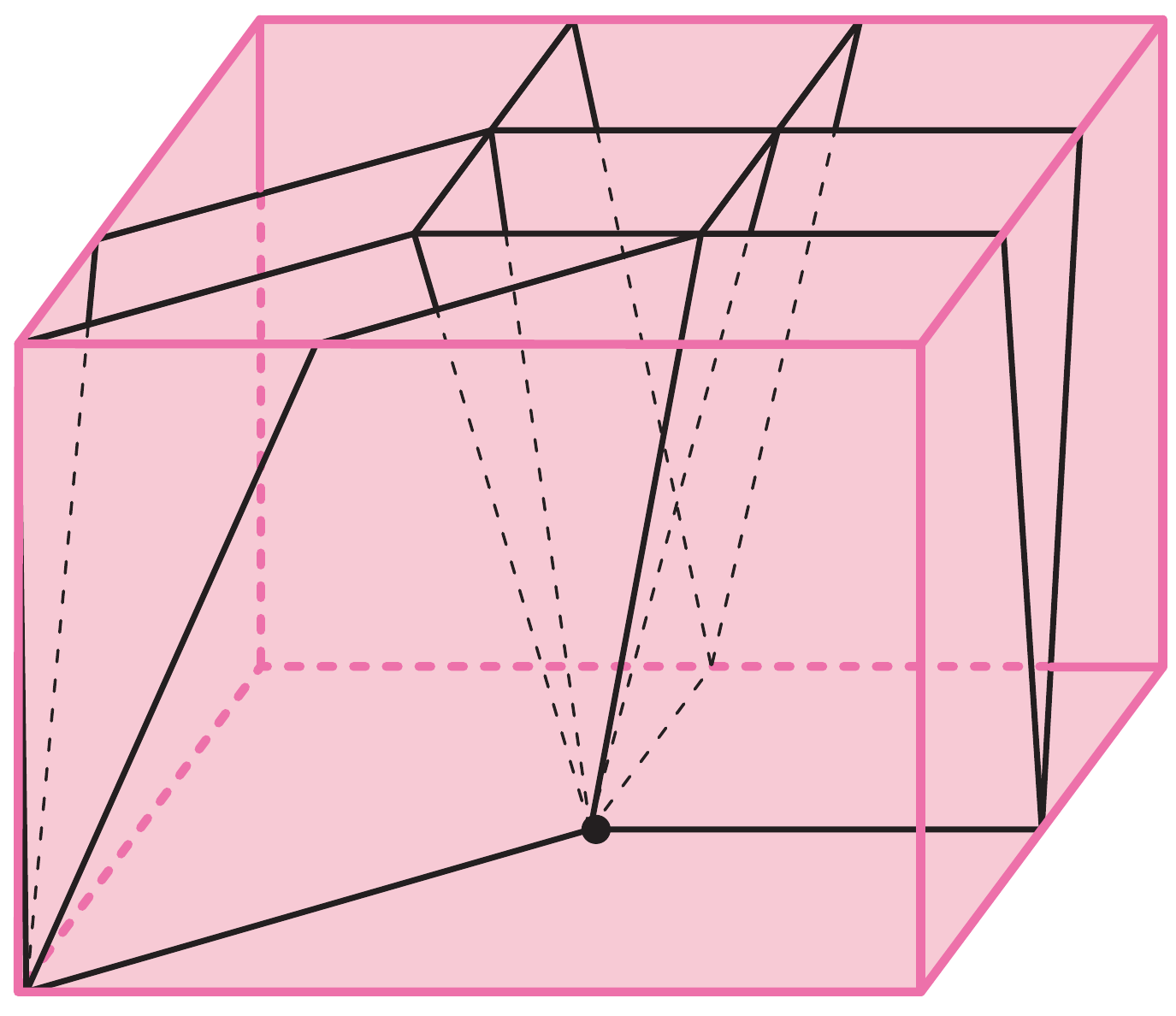}\ \ \ \ \ }="1";
	(60,0)*+{\ \ \ \ \ \includegraphics[scale=.4]{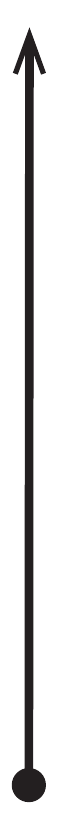}\ \ \ \ \ }="2";
	(-24,0)*+{\Delta};
	(1,-14)*+{\mbox{{\smaller $0$}}};
	(62,-16)*+{\mbox{{\smaller $0$}}};
	(69,0)*+{\big\{0,\RR_{\ge0}\big\}};
	{\ar@{->>}^{\ \ \ \ \ \ \ \ \ \ \ \ \ \ \ \ \ \ \ \ \mathrm{pr}_{2}} "1"; "2"};
	\end{xy}
	$$
	\caption{A map of fans $\mathrm{pr}_{2}:\Delta\lra\!\!\!\!\rightarrow\big\{0,\RR_{\ge0}\big\}$ inducing toric morphism of toric varieties $f_{\mathrm{pr}_{2}}:Y_{\Delta}\lra\!\!\!\!\rightarrow\mathbb{A}^{1}$.}
	\label{toric degeneration}
	\end{figure}
Because $\Delta$ satisfies $\Delta\cap\big(\RR^{2}\times\{0\}\big)=\Sigma$, it produces a complex toric variety $Y_{\Sigma}$ that comes equipped with a torus-equivariant map $Y_{\Sigma}\lra\!\!\!\!\rightarrow\mathbb{A}^{1}$. The generic fiber of the family is $\mathbb{P}^{2}$ over $k(t)$. The special fiber over $t=0$ is a union of four complex surfaces: a copy of $\mathbb{P}^{2}$, a copy of $\mathbb{P}^{1}\times\mathbb{P}^{1}$, and two Hirzebruch surfaces.
\begin{figure}[!htb]
	$$
	\underset{\mbox{The fan $\Sigma$}}{\includegraphics[scale=.325]{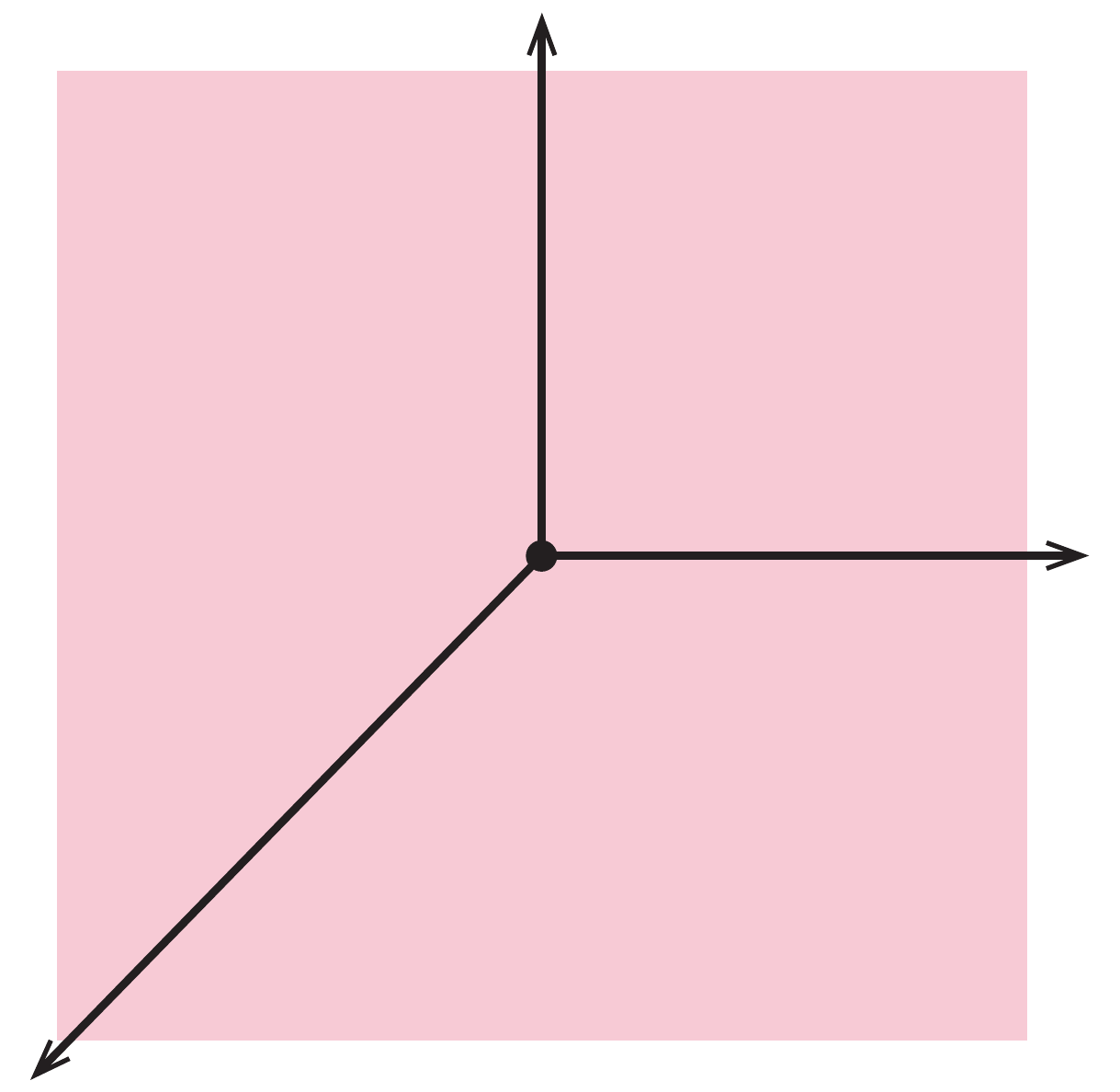}}
	\ \ \ \ \ \ \ \ \ \ \ \ \ \ \ \ \ \ \ \ \ 
	\underset{\mbox{The complex $\Delta\cap\big(\RR^{2}\times\{1\}\big)$}}{\includegraphics[scale=.325]{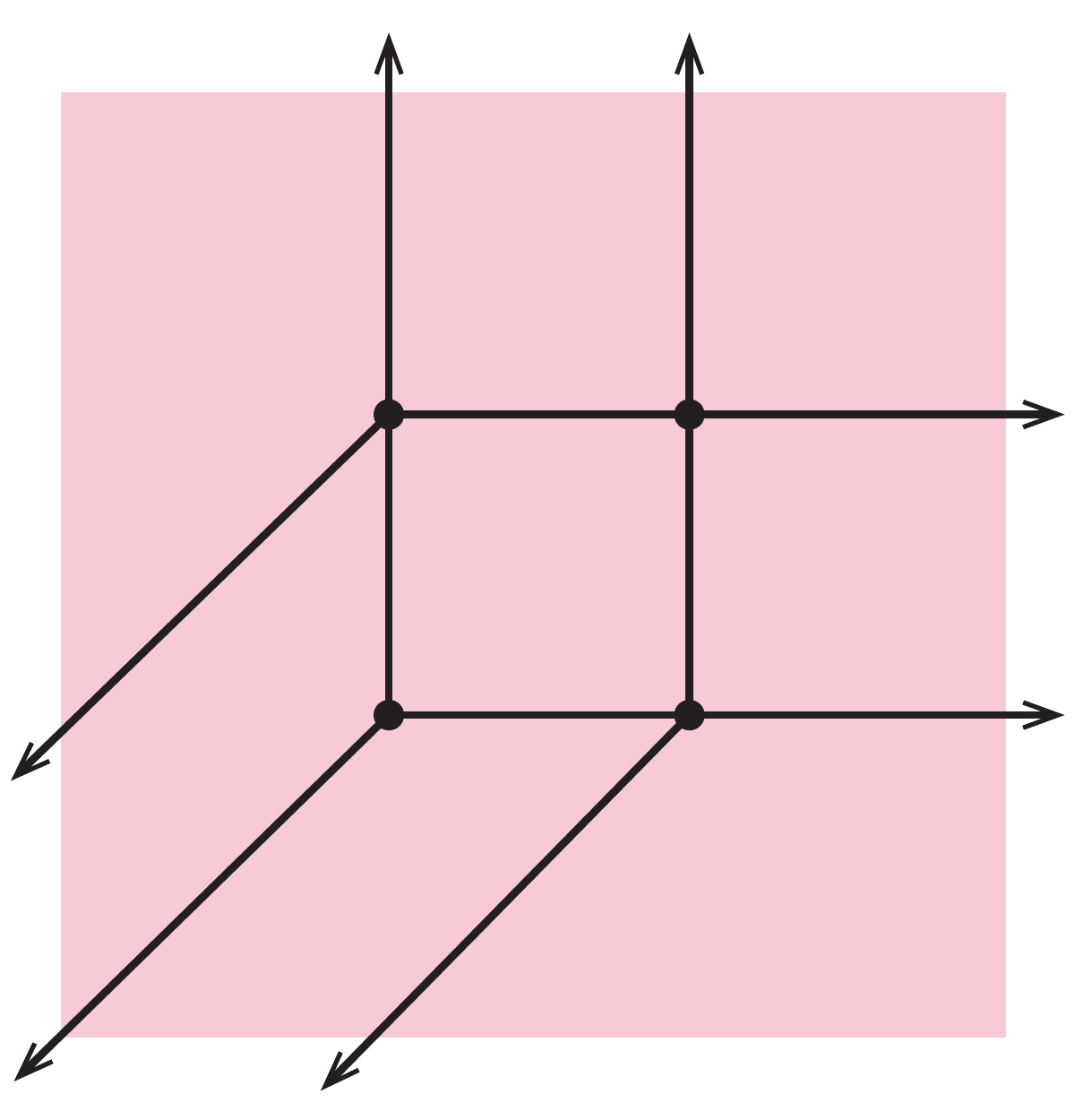}}
	$$
	\caption{The fan $\Sigma\cong\Delta\cap\big(\RR^{2}\times\{0\}\big)$ and the polyhedral complex $\Delta\cap\big(\RR^{2}\times\{1\}\big)$.}
	\label{tropcone}
	\end{figure}
These four surfaces are in one-to-one correspondence with the vertices of  the polyhedral decomposition $\Delta\cap\big(\RR^{2}\times\{1\}\big)$ pictured at left in Figure \ref{tropcone}. More precisely, they are  the toric varieties described by the stars at each vertex, and they are glued to one another along the torus-invariant divisors corresponding to the edges connecting their corresponding vertices in $C_{\Delta}$.
\end{example}
	
\vskip .3cm


\subsection{Algebraic Gubler models}\label{subsection: algebraic Gubler models}
	The fundamental observation underlying Gubler's approach to integral toric geometry, already anticipated by Kempf, Knudsen, Mumford, and Saint-Donat in \cite{KKMSD}, is that a significant part of the geometry of toric degenerations discussed in the previous \S\ref{subsection: The combinatorics of toric degenerations} works just as well upon replacing the curve $\mathbb{A}^{1}=\mathrm{Spec}_{\ \!}k[t]$ with the spectrum of the ring of integers $R$ in any non-Archimedean field $K$. This requires that we replace the factor $\ZZ$ in $M\times\ZZ$ with the value group $\Gamma=\mathrm{val}(K^{\times})\subset\mathbb{R}$ of $K$. The fact that we can express each semigroup $S_{\delta}$ appearing in the proof of Proposition \ref{proposition: how toric degenerations work} in the form (\ref{inset: suggestive description}) implies that we can express the corresponding semigroup rings as 
	\begin{equation}\label{evocative expression}
	k[S_{\delta}]\ \ =\ \ \Big\{\sum_{u\in M}a_{u}(t)\chi^{u}\in k[t,t^{-1}][M]\ :\ \langle u,v\rangle+r\ \!\mathrm{ord}_{t}\big(a_{u}(t)\big)\ge0\ \mbox{for all}\ (v,r)\in\delta\Big\},
	\end{equation}
where the coefficients $a_{u}(t)$ are Laurent polynomials in $t$, and where
	$$
	\mathrm{ord}_{t}\ \!:\ k(t)^{\times}\ \lra\ \ZZ
	$$
is the valuation on $k(t)$ given by order-of-vanishing at $t=0$. In Gubler's formalism, one replaces the ring $k[t,t^{-1}]$ in (\ref{evocative expression}) with a non-Archimedean field $K$. This produces $R$-algebras that glue to give an $R$-model of the toric $K$-variety $Y_{\Sigma}$. The present \S\ref{subsection: algebraic Gubler models} is devoted to a review of Gubler's formalism, with slight upgrades to the setting of formal $R$-schemes. In the next \S\ref{subsection: formal Gubler models}, we use the resulting formal $R$-schemes to put a topology and locally ringed structure sheaf on each exploded tropicalization.

	\vskip .3cm

\begin{definition}
A $\Gamma$-{\em admissible cone} is any convex cone
$\delta\subset N_{\mathbb{R}}\!\times\!\mathbb{R}_{\geq0}$ that can be written as an intersection of the form
$$
\delta
\ \ =\ \ 
\bigcap_{i=1}^{n}\big\{(v,c)\in N\!\times\!\mathbb{R}_{\geq0}:\langle u_{i},v\rangle+\gamma_{i}c\geq0\big\}
$$
for vectors
$(u_{1},\gamma_{1}),\dots,(u_{n},\gamma_{n})$ in $M\!\times\!\Gamma$. A $\Gamma$-{\em admissible fan} is any fan $\Delta$ in $N_{\mathbb{R}}\!\times\!\mathbb{R}_{\geq0}$ whose every cone $\delta$ is $\Gamma$-admissible.
\end{definition}
	
	\vskip .3cm
	
\begin{definition}
	If $\delta$ is a $\Gamma$-admissible cone in $N_{\mathbb{R}}\times\mathbb{R}_{\ge0}$, its associated {\em $\delta$-tilted algebra} is the ring
$$
R[U_{\delta}]
\ \ \ \overset{\mathrm{def}}{=}\ \ \ 
\Big\{\ \!\sum_{u\in M}a_{u}\chi^{u}\in K[M]\ :\langle u,v\rangle+\mathrm{val}(a_{u})\cdot c\geq0\ \mathrm{for\ all\ }(v,c)\in \delta\ \!\Big\}.
$$
\end{definition}

	\vskip .3cm
	
	By \cite[Proposition 11.3]{Gub}, every $\delta$-titled algebra $R[U_{\delta}]$ is in fact a finite-type, flat $R$-algebra. We let $U_{\delta}$ denote the $R$-scheme $U_{\delta}=\Spec{R[U_{\delta}]}$.
	
	Each inclusion $\delta'\hookrightarrow\delta$ of $\Gamma$-admissible cones in $N_{\RR}\times\RR_{\ge0}$ realizing $\delta'$ as a face of $\delta$ induces a functorial, open embedding of $R$-schemes
	\begin{equation}\label{inset: induced inclusions}
	U_{\delta'}\ \mono\ U_{\delta}.
	\end{equation}
If $\Delta$ is a $\Gamma$-admissible fan in $N_{\mathbb{R}}\times\mathbb{R}_{\ge0}$, then we can glue the affine $R$-schemes $\{U_{\delta}\}_{\delta\in\Delta}$ along these induced open embeddings (\ref{inset: induced inclusions}) to obtain a single $R$-scheme $Y_{\Delta}$. The $R$-scheme $Y_{\Delta}$ is locally of finite type over $R$. Gubler shows \cite{Gub} that when the support of $\Delta$ is $N_{\RR}\times\RR_{\ge0}$, the scheme $Y_{\Delta}$ is in fact proper over $\mathrm{Spec}_{\ \!}R$, with generic fiber canonically isomorphic to the toric $K$-variety $Y_{\Sigma}$ glued from the fan
	$$
	\Sigma\ \overset{\mathrm{def}}{=}\ \Delta\ \cap\ \big(\ \!N_{\mathbb{R}}\times\{0\}\ \!\big),
	$$
upon interpreting $\Sigma$ as a fan in $N_{\RR}$. In short:
	\begin{equation}\label{inset: non-formal generic fiber}
	(Y_{\Delta})_{K}\ \ \cong\ \ Y_{\Sigma}.
	\end{equation}
Identifying $N_{\RR}\times\{0\}$ with $N_{\RR}$, we refer to the intersection $\Delta\cap\big(N_{\mathbb{R}}\times\{0\}\big)$ as the {\em recession fan of $\Delta$ in $N_{\RR}$}, and denote it $\mathrm{rec}(\Delta)$. Just as in Proposition \ref{proposition: how toric degenerations work}.(ii), the special fiber of $Y_{\Delta}$ over the unique maximal ideal in $\mathrm{Spec}_{\ \!}R$ is a union of toric $k$-varieties indexed by the vertices in the {\em height-$1$ polyhedral complex}
	\begin{equation}\label{inset: height-1, non-Arch}
	\Delta\ \cap\ \big(\ \!N_{\mathbb{R}}\times\{1\}\ \!\big),
	\end{equation}
and glued along torus-invariant divisors according to the incidence profile of higher-dimensional polyhedra in this height-$1$ complex.
	
	We refer to $Y_{\Delta}$ as the ({\em algebraic}) {\em Gubler model} associated to the $\Gamma$-admissible fan $\Delta$.
	
	\vskip .3cm
	
\begin{example}
	Let $K=k((t))$, so that $R=k[[t]]$ and $\Gamma=\ZZ$. Then the fan $\Delta$ from Example \ref{example: toric degeneration} above, being integral, is $\Gamma$-admissible. If $\mathbb{Y}_{\Delta}$ denotes the toric $k$-variety that we constructed in Example \ref{example: toric degeneration}, equipped with its torus-equivariant fibration $f_{\mathrm{pr}_{2}}:\mathbb{Y}_{\Delta}\lra\!\!\!\!\rightarrow\mathbb{A}^{1}$, then the algebraic Gubler model associated to $\Delta$ is the fiber
	$$
	Y_{\Delta}\ \ \cong\ \ \mathrm{Spec}_{\ \!}k[[t]]\underset{\ \mathbb{A}^{1}\!}{\times}\mathbb{Y}_{\Delta}
	$$
over the $k[[t]]$-valued point $\mathrm{Spec}_{\ \!}k[[t]]\lra\mathbb{A}^{1}$ supported at $t=0$.
\end{example}
	
	\vskip .3cm
	
\begin{example}
	Let $K=\mathbb{Q}_{p}$, so that $R=\ZZ_{p}$ and $\Gamma=\ZZ$. Define $M=\ZZ$ and $N=\ZZ$, so that the standard pairing is just multiplication. Let $\Delta$ be the $\ZZ$-admissible fan in $N_{\RR}\times\RR_{\ge0}=\RR\times\RR_{\ge0}$ pictured at left in Figure \ref{figure over Z_p} below.
	\vskip -.25cm
	\begin{figure}[!htb]
	$$
	\begin{array}{ccc}
	\begin{xy}
	(0,0)*+{\ \ \ \ \ \includegraphics[scale=.4]{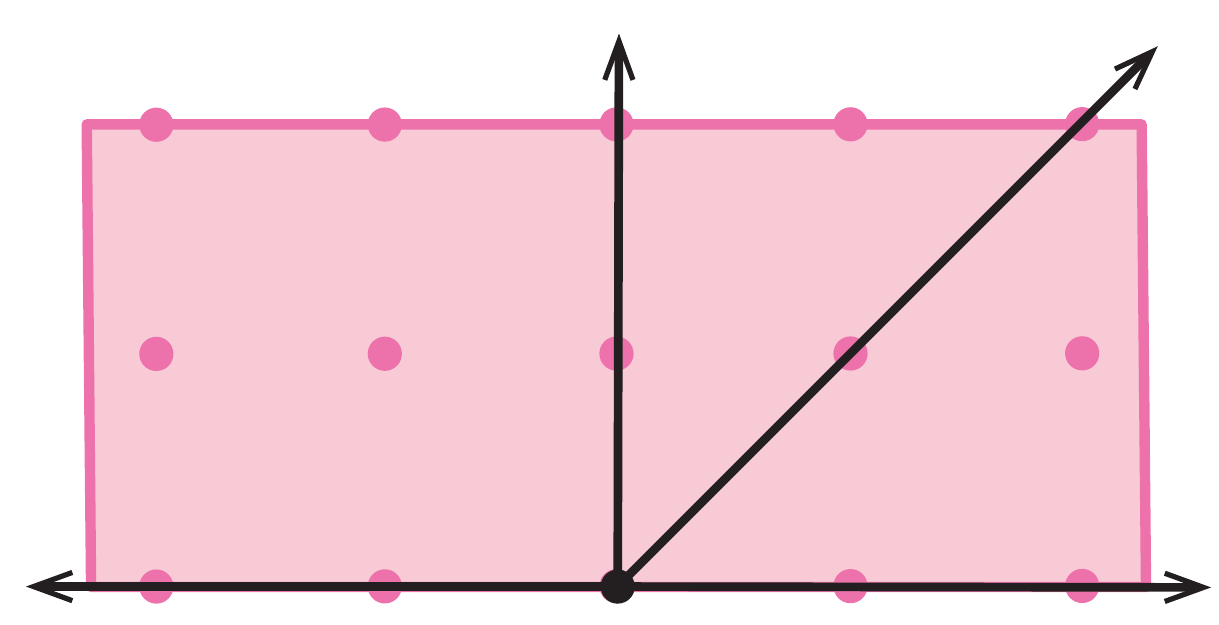}\ \ \ \ \ };
	(-25,0)*+{\Delta};
	(0,-13.5)*+{\mbox{{\smaller $0$}}};
	\end{xy}
	&
	\ \ \ \ \ \ 
	\mbox{{\larger\larger\larger $\longmapsto$}}
	\ \ 
	&
	\begin{xy}
	(-1,0)*+{\ \ \ \ \ \includegraphics[scale=.25]{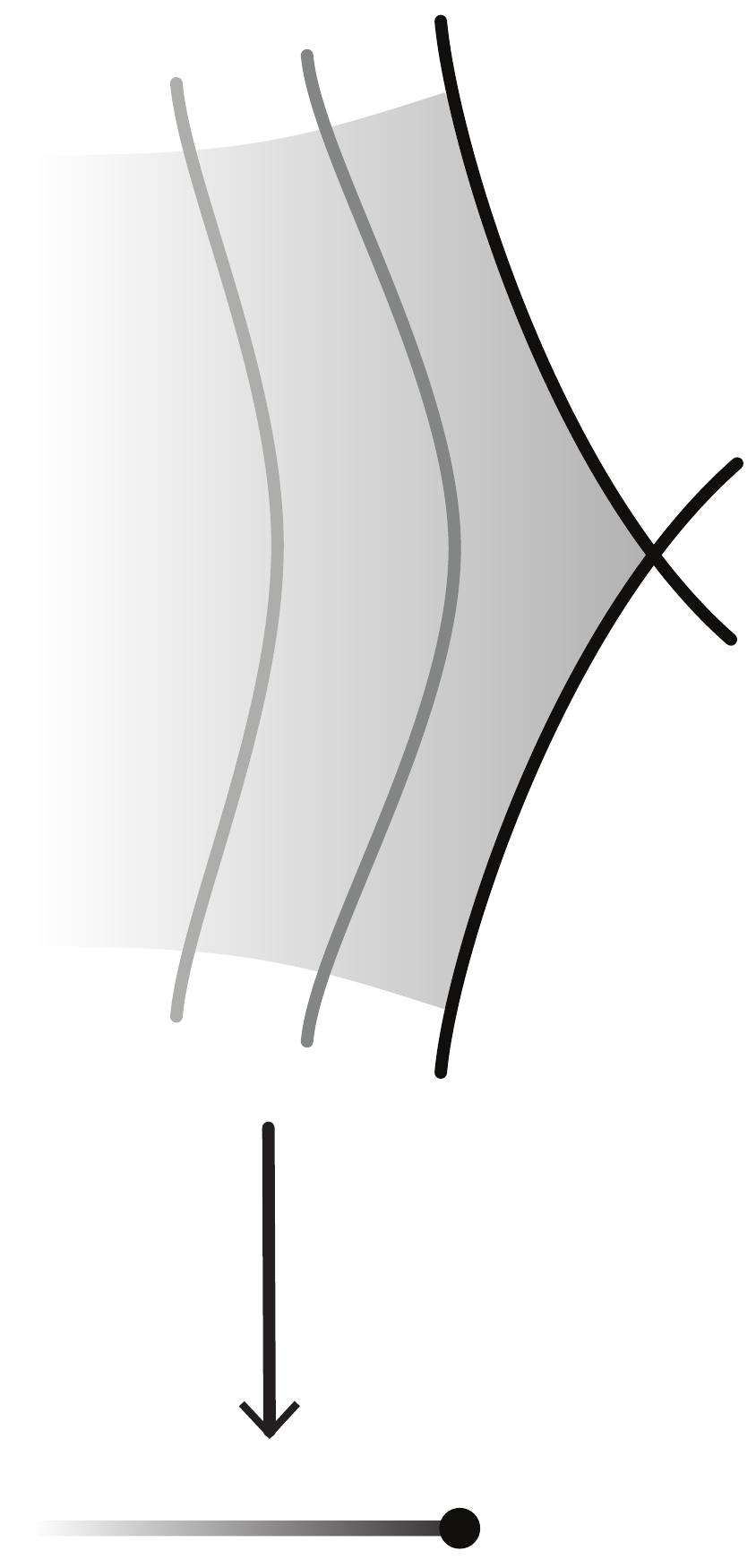}\ \ \ \ \ };
	(-9,-10.75)+*{\mathrm{Spec}_{\ \!}\ZZ_{p}};
	(3,-10.5)+*{(p)};
	(-5,3)+*{Y_{\Delta}};
	(4,-3)+*{(Y_{\Delta})_{{}_{\mathbb{F}_{p}}}};
	\end{xy}
	\end{array}
	$$
	\caption{A $\ZZ$-admissible fan in $\RR\times\RR_{\ge0}$, and its associated $\ZZ_{p}$-model of $\mathbb{P}^{1}_{\mathbb{Q}_{p}}$.}
	\label{figure over Z_p}
	\end{figure}
	
	Going from left to right, the $2$-dimensional cones in $\Delta$ have corresponding tilted algebras
	$$
	\ZZ_{p}[t^{-1}],
	\ \ \ \ \ \ \
	\ZZ_{p}[t,pt^{-1}]\ \cong\ \ZZ_{p}[x,y]\big/(xy-p),
	\ \ \ \ \ \ \ \mbox{and}\ \ \ \ \ \ 
	\ZZ_{p}[p^{-1}t].
	$$
Thus $Y_{\Delta}$ describes the projective line over $\mathbb{Q}_{p}$ degenerating to a pair of projective lines over $\mathbb{F}_{p}$ intersecting at a node.
\end{example}
	
	\vskip .3cm
	
\subsection{Formal Gubler models}\label{subsection: formal Gubler models}\ 
	If $\Delta$ is a $\Gamma$-admissible fan in $N_{\mathbb{R}}\times\RR_{\ge0}$, whose support is all of $N_{\mathbb{R}}\times\RR_{\ge0}$, then we let $\mathfrak{Y}_{\Delta}$ denote the formal $R$-scheme obtained as the completion of $Y_{\Delta}$ along its special fiber,
	$$
	\mathfrak{Y}_{\Delta}
	\ \ \overset{\mathrm{def}}{=}\ \ 
	\widehat{\ Y_{\Delta}}.
	$$
If, at each cone $\delta$ in $\Delta$, we let $R[\mathfrak{U}_{\delta}]$ denote the formal completion
	$$
	R[\mathfrak{U}_{\delta}]
	\ \ \overset{\mathrm{def}}{=}\ \ 
	\widehat{R[U_{\delta}]\ \!}
	$$
along the ideal generated by $\mathfrak{m}$ in $R[U_{\delta}]$, then $\mathfrak{Y}_{\Delta}$ is glued from the formal spectra
	$$
	\mathfrak{U}_{\delta}\ =\ \mathrm{Spf}_{\ \!}R[\mathfrak{U}_{\delta}].
	$$
Because each $\mathfrak{U}_{\delta}$ is an admissible formal $R$-scheme, this implies that $\mathfrak{Y}_{\Delta}$ is itself an admissible formal $R$-scheme.
	
	Because $\mathfrak{Y}_{\Delta}$ is a formal completion, the ``generic fiber" of $\mathfrak{Y}_{\Delta}$ is no longer a $K$-scheme as in the previous \S\ref{subsection: algebraic Gubler models}. Instead, \cite[\S2.4]{Bosch} tells us that in the formal setting, the topological $K$-algebras
	$$
	K\underset{R}{\tensor} R[\mathfrak{U}_{\delta}],
	\ \ \ \mbox{for}\ \ \ 
	\delta\ \ \mbox{in}\ \ \Delta,
	$$
are $K$-affinoid algebras. Their adic spectra
	\begin{equation}\label{inset: Berkovich spectra}
	\mathfrak{U}^{\mathrm{ad}}_{\delta}
	\ \ \overset{\mathrm{def}}{=}\ \ 
	\mathrm{Spa}\big(\ \!K\!\otimes_{R}\!R[\mathfrak{U}_{\delta}],\ R[\mathfrak{U}_{\delta}]\ \!\big)
	\end{equation}
glue to produce an {\em adic generic fiber} $\mathfrak{Y}^{\mathrm{ad}}_{\Delta}$ of the formal $R$-scheme $\mathfrak{Y}_{\Delta}$. In place of the isomorphism of $K$-varieties (\ref{inset: non-formal generic fiber}), we have a canonical isomorphism of adic spaces
	$$
	\mathfrak{Y}^{\mathrm{ad}}_{\Delta}\ \ \cong\ \ Y^{\mathrm{ad}}_{\Sigma}.
	$$
In the language of Definition \ref{admissible model of a scheme}, the formal $R$-scheme $\mathfrak{Y}_{\Delta}$ is an admissible formal model of the $K$-scheme $Y_{\Sigma}$.

\vskip .3cm


\subsection{Adic tropicalization of a closed embedding}\label{Adic tropicalization of a closed embedding}\ 

	Fix a proper algebraic variety $X$ over $K$. Even if $X$ fails to be projective, we can still ask for closed embeddings into proper toric $K$-varieties. Let us suppose that $X$ is a proper $K$-variety admiting at least one closed embedding into a proper toric $K$-variety
	$$
	\textit{\i}\ \!:\ X\ \mono\ Y_{\Sigma},
	$$
where $\Sigma$ is a complete fan in $N_{\RR}$. Using the machinery of \S\ref{subsection: algebraic Gubler models}, we can produce a flat, proper, algebraic $R$-model $Y_{\Delta}$ of $Y_{\Sigma}$ by making a choice of $\Gamma$-admissible fan $\Delta$ in $N_{\RR}\times\RR_{\ge0}$ with support equal to $N_{\RR}\times\RR_{\ge0}$. Our variety $X$ sits in the generic fiber of this model, and Gubler shows \cite{Gub} that the closure $\overline{X}$ of $X$ in $Y_{\Delta}$ is itself a flat, proper $R$-model of $X$. Let
	$$
	\mathfrak{X}_{(\Delta,\textit{\i})}
	\ \ \overset{\mathrm{def}}{=}\ \ 
	\widehat{\ \overline{X}\ }
	$$
denote the formal completion of the algebraic $R$-model $\overline{X}$ along its special fiber. This formal completion admits a canonical isomorphism
	$
	\mathfrak{X}^{\mathrm{ad}}_{(\Delta,\textit{\i})}\ \ \cong\ \ X^{\mathrm{ad}}.
	$

	\vskip .3cm
	
\begin{definition}	
	The {\em category of fans recessed over $\textit{\i}$}, denoted $\bold{REC}_{\Sigma,\textit{\i}}$, is the category with:
	\begin{changemargin}{2cm}{0cm} 
	\begin{itemize}
	\item[{\bf\em objects:}] \vskip .15cm
	Any locally finite $\Gamma$-admissible fan $\Delta$ in $N_{\mathbb{R}}\times\mathbb{R}_{\ge0}$ with recession fan $\mathrm{rec}(\Delta)=\Sigma$;
	\item[{\bf\em morphisms:}] \vskip .15cm
	Any pair of locally finite $\Gamma$-admissible fans $\Delta'$ and $\Delta$ in $N_{\mathbb{R}}\times\mathbb{R}_{\ge0}$ such that each cone $\delta'$ in $\Delta'$ is contained in at least one cone $\delta$ in $\Delta$.
	\end{itemize}
	\end{changemargin}
	\vskip .15cm
We refer to a morphism $\rho:\Delta'\lra\Delta$ in $\bold{REC}_{\Sigma,\textit{\i}}$ as a {\em refinement of $\Delta$}.
\end{definition}

\vskip .3cm
	
	If $\rho:\Delta'\lra\Delta$ is a refinement of $\Delta$, then it induces a morphism $f_{\rho}:Y_{\Delta'}\lra Y_{\Delta}$ of $R$-models of $Y_{\Sigma}$. This map of $R$-models itself restricts to a morphism
	$$
	f_{\rho}\ \!:\ \mathfrak{X}_{(\Delta',\textit{\i})}\ \lra\ \mathfrak{X}_{(\Delta,\textit{\i})}
	$$
of admissible formal $R$-models of $X$. In this way, the assignment $\Delta\mapsto\mathfrak{X}_{(\Delta,\textit{\i})}$ becomes a functor $\bold{REC}_{\Sigma,\textit{\i}}\lra\bold{AFS}_{R}^{X}$ that takes values in the category of admissible formal models of $\an{X}$.

	\vskip .3cm

\begin{definition}\label{adictrop}
{\bf (Adic tropicalization).}
The {\em adic tropicalization} of the closed embedding
$
\xymatrix{
\textit{\i}:X
\ar@{^{(}->}[r]
&
Y_{\Sigma}
}
$
is the inverse limit
$$
\big(\ \!\mathrm{Ad}(X,\textit{\i})\ \!,\ \mathscr{O}_{\!\mathrm{Ad}(X,\textit{\i})}\ \!\big)
\ \ \ \overset{\mathrm{def}}{=}\ 
\varprojlim_{\ \bold{REC}_{\Sigma,\textit{\i}}}\big(\mathfrak{X}_{_{(\Delta,\textit{\i})}},\mathscr{O}_{\mathfrak{X}_{_{(\Delta,\textit{\i})}}}\big)
$$
in the category of locally topologically ringed spaces.
\end{definition}

\vskip .3cm

	The utility of Definition \ref{adictrop} comes from the fact that, as an immediate consequence of Remark \ref{remark: Gillam}, it gives $\mathrm{Ad}(X,\textit{\i})$ the structure of a locally topologically ringed topological space. But Definition \ref{adictrop} obscures the relationship between $\mathrm{Ad}(X,\textit{\i})$ and the exploded tropicalization of $X$. To clarify this relationship, first consider the special case where $\textit{\i}:X'\mono\mathbb{T}'$ is a closed embedding into a torus, with $N'$ the cocharacter lattice of $\mathbb{T}'$. In this case, the {\em exploded tropicalization} is as described in \S\ref{introduction}: it is the union
	$$
	\mathfrak{Trop}(X',\textit{\i}')
	\ \ \overset{\mathrm{def}}{=}\ \ 
	\bigsqcup_{v\in N'_{\RR}}|\mathrm{in}_{v}X'|.
	$$
For a closed embedding $\textit{\i}:X\mono Y_{\Sigma}$ into an arbitrary toric variety $Y_{\Sigma}$, the {\em extended exploded tropicalization}, denoted $\mathfrak{Trop}(X,\textit{\i})$, is the set-theoretical union
	$$
	\mathfrak{Trop}(X,\textit{\i})\ \ =\ \ \bigsqcup_{\sigma\in\Sigma}\mathfrak{Trop}\big(\ \!\textit{\i}^{-1}\big(O(\sigma)\big),\ \!\textit{\i}\ \!\big),
	$$
where $\textit{\i}^{-1}\big(O(\sigma)\big)$ is the part of $X$ that maps to the torus-orbit $O(\sigma)\cong\mathbb{T}_{N/\sigma\cap N}$ under $\textit{\i}$ (see \cite[\S3.2]{CLS}). In other words, the extended exploded tropicalization is the union of all exploded tropicalizations encoded in $\textit{\i}$ as we run over all locally closed torus-orbit strata in $Y_{\Sigma}$.
	
	Note that each $\Gamma$-rational vector $v$ in $N_{\RR}$ determines a $\Gamma$-admissible ray $\tau_{v}\overset{{}_{\mathrm{def}}}{=}\RR_{\ge0}\ \!(v,1)$ in $N_{\RR}\times\RR_{\ge0}$, and vice versa, as depicted at left  in Figure \ref{points and rays}. When $v$ is {\em not} $\Gamma$-rational, the ray $\tau_{v}$ is no longer $\Gamma$-admissible, but there are still many $\Gamma$-admissible cones $\delta$ containing $\tau_{v}$, as depicted at right in Figure \ref{points and rays}.
	\begin{figure}[!htb]
	$$
	\ \ \ \ \ \ \ \ \ 
	\scalebox{.85}{$
	\begin{xy}
	(0,0)*+{\includegraphics[scale=.35,clip=true,trim=85pt 172pt 150pt 200pt]{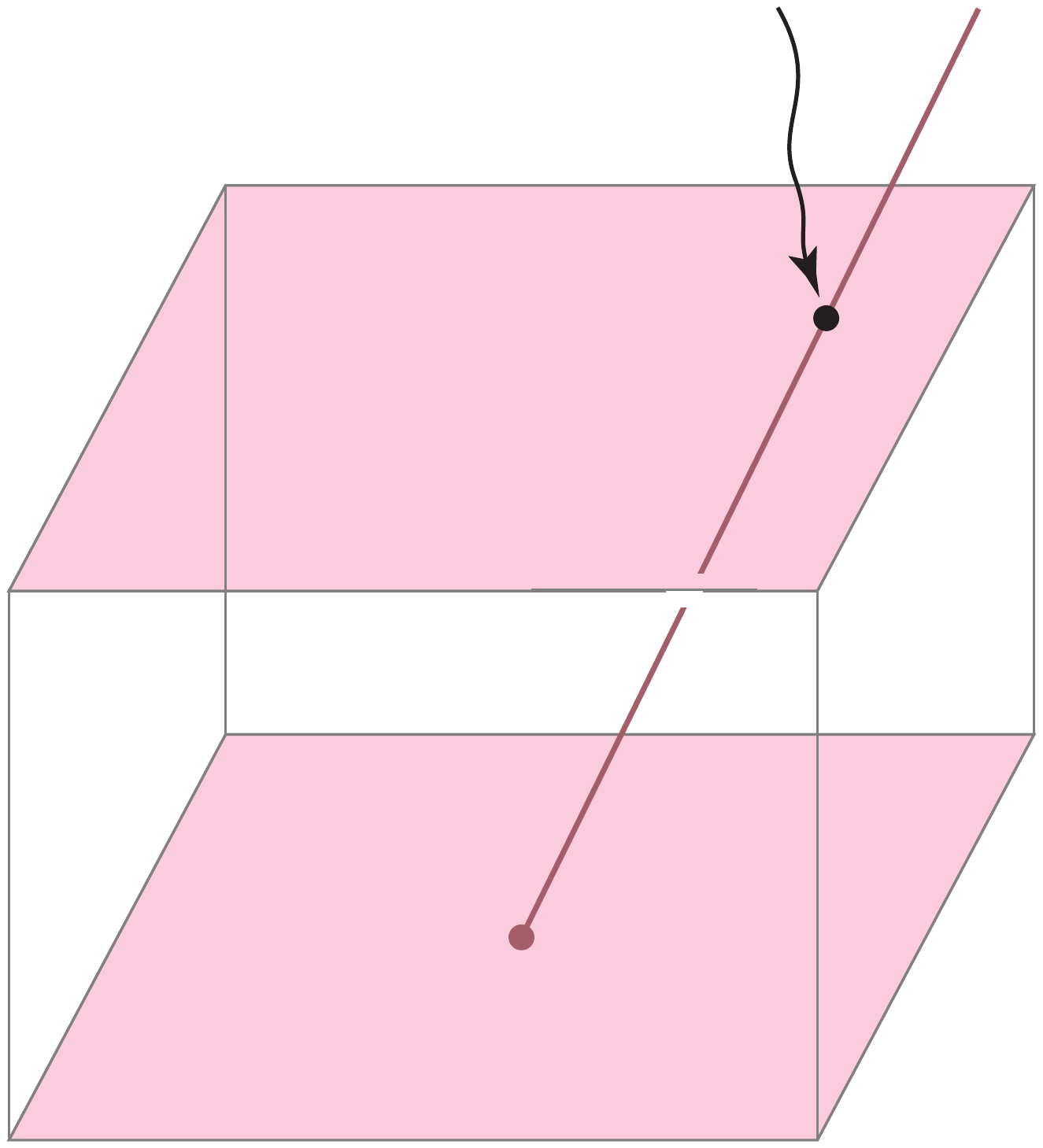}};
	(10,28)*+{(v,1)};
	(28,29)*+{\ \ \ \ \ \ \ \ \tau_{v}\ \!\overset{\mathrm{def}}{=}\ \!\mathbb{R}_{\ge0}(v,1)};
	\end{xy}
	\ 
	\begin{xy}
	(0,0)*+{\includegraphics[scale=.35,clip=true,trim=80pt 171pt 101pt 210pt]{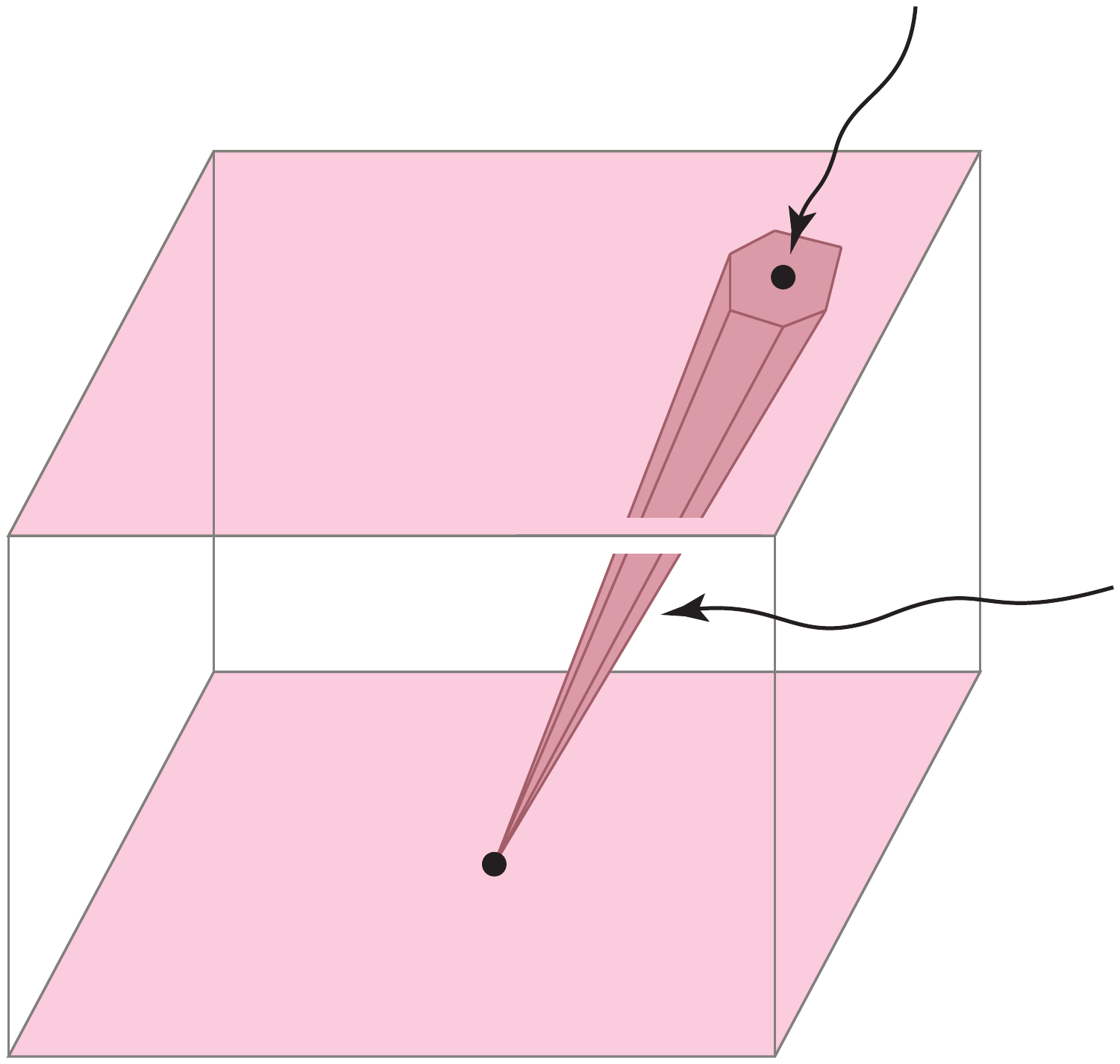}};
	(17,28)*+{(v,1)};
	(42.5,2)*+{\mbox{$\Gamma$-admissible cone}};
	(42.5,-2.75)*+{\delta\mbox{ containing the}};
	(42.5,-7)*+{\mbox{ray $\tau_{v}=\mathbb{R}_{\ge0}(v,1)$}};
	\end{xy}
	$}
	$$
	\caption{The ray $\tau_{v}$ spanned by a vector $(v,1)$ in $N_{\RR}\times\{1\}$, at left, and a $\Gamma$-admissible cone $\delta$ containing $\tau_{v}$ when $\tau_{v}$ is not itself $\Gamma$-rational.}
	\label{points and rays}
	\end{figure}

\vskip .3cm

\begin{proposition}\label{exploded lemma 1}
For any vector $v$ in $N_{\mathbb{R}}$, we have a natural isomorphism
	\begin{equation}\label{inset: inverse limit giving initial degeneration}
	\mathrm{in}_{v}X
	\ \ \cong\ \ 
	\varprojlim_{\delta\supset\tau_{v}}\big(\mathfrak{X}_{\delta}\big)_{k},
	\end{equation}
where the limit runs over all $\Gamma$-admissible cones $\delta$ in $N_{\mathbb{R}}\times\mathbb{R}_{\ge0}$ that contain the ray $\tau_{v}$. When $v$ is $\Gamma$-rational, the above isomorphism (\ref{inset: inverse limit giving initial degeneration}) simplifies to an isomorphism
	$$
	\mathrm{in}_{v}X\ \ \cong\ \ \big(\mathfrak{X}_{\tau_{v}}\big)_{k}.
	$$
\end{proposition}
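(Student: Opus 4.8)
The plan is to reduce the statement to a single identity of tilted semigroup rings and then transport that identity through the two geometric operations involved: forming the closure of $X$, and restricting to the special fiber. Throughout I work in the torus-embedded case $X\hookrightarrow\mathbb{T}=\operatorname{Spec}K[M]$, the general closed embedding into $Y_\Sigma$ being treated the same way on each torus-orbit stratum. First I would introduce the index category $\mathcal{I}_v$, whose objects are the $\Gamma$-admissible cones $\delta\subseteq N_{\RR}\times\RR_{\ge0}$ containing the ray $\tau_v$, with a unique morphism $\delta\to\delta'$ whenever $\delta'\subseteq\delta$. Since the intersection of two $\Gamma$-admissible cones is $\Gamma$-admissible and the intersection of two cones containing $\tau_v$ again contains $\tau_v$, the category $\mathcal{I}_v$ is filtered; moreover $\delta'\subseteq\delta$ exhibits the defining inequalities of $\delta'$ as a subcollection of those of $\delta$, so $R[U_\delta]\subseteq R[U_{\delta'}]\subseteq K[M]$ and $\delta\mapsto R[U_\delta]$ is a filtered diagram of subrings of $K[M]$.

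The crux is the identity $R[U_v]=\varinjlim_{\delta\in\mathcal{I}_v}R[U_\delta]=\bigcup_{\delta\in\mathcal{I}_v}R[U_\delta]$ inside $K[M]$. The inclusion $\supseteq$ is immediate from $\tau_v\subseteq\delta$. For $\subseteq$, I would take $f=\sum_u a_u\chi^u\in R[U_v]$; for each $u$ with $a_u\ne0$ one has $\langle u,v\rangle+\operatorname{val}(a_u)\ge0$, so the $\Gamma$-admissible half-space $H_u:=\{(w,c):\langle u,w\rangle+\operatorname{val}(a_u)\,c\ge0\}$ contains $\tau_v$. Then $\delta_f:=(N_{\RR}\times\RR_{\ge0})\cap\bigcap_{u:\,a_u\ne0}H_u$ is an intersection of exactly the shape appearing in the definition of a $\Gamma$-admissible cone, it contains $\tau_v$, and by construction every monomial $a_u\chi^u$, hence $f$ itself, lies in $R[U_{\delta_f}]$. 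This is the step carrying the real content, and the main obstacle lies here: it is precisely the $\Gamma$-rationality built into the notion of $\Gamma$-admissible cone, namely that $\operatorname{val}(a_u)\in\Gamma$, that makes the single cone $\delta_f$ capturing the (in general infinitely generated) element $f$ an object of $\mathcal{I}_v$.

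It remains to run this identity through the geometry. The scheme-theoretic closure of $X$ in $U_\delta:=\operatorname{Spec}R[U_\delta]$ has coordinate ring $R[U_\delta]/(I\cap R[U_\delta])$, where $I\subseteq K[M]$ is the ideal of $X$; this quotient is $R$-torsion free, hence $R$-flat since $R$ is a valuation ring, so it is the flat $R$-model used by Gubler, and the same holds with $v$ in place of $\delta$. Because filtered colimits are exact and commute with intersections inside $K[M]$, one gets $R[U_v]/(I\cap R[U_v])=\varinjlim_\delta R[U_\delta]/(I\cap R[U_\delta])$; applying $-\otimes_R k$, which commutes with colimits, yields $\mathcal{O}(\mathrm{in}_v X)=\varinjlim_\delta\mathcal{O}((\mathfrak{X}_\delta)_k)$, where I use that passing to the formal completion along the special fiber leaves the special fiber unchanged, so that $(\mathfrak{X}_\delta)_k$ is the special fiber of the closure of $X$ in $U_\delta$. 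Since $\operatorname{Spec}$ carries a filtered colimit of rings to a cofiltered limit of affine $k$-schemes, this is the asserted natural isomorphism $\mathrm{in}_v X\cong\varprojlim_{\delta\supseteq\tau_v}(\mathfrak{X}_\delta)_k$.

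For the last assertion, I would note that when $v\in N_\Gamma$ one may pick a $\ZZ$-basis $u_1,\dots,u_n$ of $M$; the $n$ hyperplanes $\{\langle u_i,w\rangle-\langle u_i,v\rangle\,c=0\}$ all contain $\tau_v$ and have $\langle u_i,v\rangle\in\Gamma$, so reading each as a pair of half-spaces and adjoining $\{c\ge0\}$ exhibits $\tau_v$ as a $\Gamma$-admissible cone. Hence $\tau_v$ is an object, and evidently a terminal object, of $\mathcal{I}_v$. A filtered colimit over a category with a terminal object is computed at that object, so $R[U_v]=R[U_{\tau_v}]$ and the isomorphism above collapses to $\mathrm{in}_v X\cong(\mathfrak{X}_{\tau_v})_k$.
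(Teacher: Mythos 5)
The paper itself gives no proof of this proposition --- it is expository and defers all proofs to the forthcoming paper [FP] --- so there is nothing to compare against line by line; I can only assess your argument on its own terms, and it is correct. You have also correctly isolated where the content lies: the identity $R[U_{v}]=\bigcup_{\delta\supseteq\tau_{v}}R[U_{\delta}]$ inside $K[M]$, and in particular the observation that for a single Laurent polynomial $f=\sum_{u}a_{u}\chi^{u}\in R[U_{v}]$ the finitely many half-spaces $H_{u}=\{(w,c):\langle u,w\rangle+\mathrm{val}(a_{u})\,c\ge 0\}$ have $\Gamma$-rational ``slopes'' $\mathrm{val}(a_{u})\in\Gamma$ automatically, so that $\delta_{f}$ is a genuine object of the index category even when $v$ itself is irrational. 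Since the paper's definition of a $\Gamma$-admissible cone does not require pointedness, $\delta_{f}$ needs no further shrinking, and the rest of your argument (exactness of filtered colimits through the quotient by $I\cap R[U_{\delta}]$, compatibility of $-\otimes_{R}k$ with colimits, invariance of the special fiber under formal completion, and $\mathrm{Spec}$ converting the filtered union of rings into a cofiltered limit of affine $k$-schemes) is routine and correctly executed. The final reduction in the $\Gamma$-rational case, exhibiting $\tau_{v}$ as a $\Gamma$-admissible cone that is terminal in the index category, is also right. One small point worth making explicit if you write this up: $R[U_{v}]$ is defined in the introduction as the ring \emph{generated by} the monomials $a\chi^{u}$ with $\langle u,v\rangle+\mathrm{val}(a)\ge 0$, so you should note (via the ultrametric inequality $\mathrm{val}(a+b)\ge\min(\mathrm{val}(a),\mathrm{val}(b))$) that every element of this ring is in fact a sum of such monomials, which is what your argument uses.
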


\vskip .3cm

	One uses Proposition \ref{exploded lemma 1} to prove that the topological space underlying $\mathrm{Ad}(X,\textit{\i})$ coincides with the extended exploded tropicalization $\mathfrak{Trop}(X,\textit{\i})$:

\vskip .3cm

\begin{theorem}\label{theorem: topology on exploded trop}
	Given any closed embedding $\textit{\i}:X\mono Y_{\Sigma}$ of a $K$-variety $X$ into a proper toric $K$-variety $Y_{\Sigma}$, there is a natural bijection 
	$$
	\mathrm{Ad}(X,\textit{\i})\ \xrightarrow{\ \sim\ }\ \mathfrak{Trop}(X,\textit{\i})
	$$
from the underlying set of the adic tropicalization to the exploded extended tropicalization.
\end{theorem}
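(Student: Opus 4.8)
The plan is to pass to underlying sets on both sides, identify the underlying set of $\mathrm{Ad}(X,\textit{\i})$ with a genuine topological inverse limit, and then extract from a compatible system of points both a vector $v\in\mathrm{Trop}(X,\textit{\i})$ and a point of the initial degeneration $\mathrm{in}_{v}X$, using Proposition \ref{exploded lemma 1} as the key input.

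First I would argue that, since every transition morphism $f_{\rho}\colon\mathfrak{X}_{(\Delta',\textit{\i})}\to\mathfrak{X}_{(\Delta,\textit{\i})}$ is proper and the indexing category $\bold{REC}_{\Sigma,\textit{\i}}$ is cofiltered (a common refinement of two $\Gamma$-admissible fans with recession fan $\Sigma$ is again such a fan), the description of limits in $\bold{LTRS}$ from Remark \ref{remark: Gillam} shows that the underlying set of $\mathrm{Ad}(X,\textit{\i})$ is the inverse limit $\varprojlim_{\Delta}\big|\mathfrak{X}_{(\Delta,\textit{\i})}\big|$ of the underlying topological spaces, exactly as in the proof of Proposition \ref{limit}. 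Since formal completion along the special fiber does not change the underlying space, $\big|\mathfrak{X}_{(\Delta,\textit{\i})}\big|=\big|(\overline{X})_{k}\big|$, and by Proposition \ref{proposition: how toric degenerations work}.(ii) together with the orbit--cone correspondence this special fiber is stratified by the cones of $\Delta$ not contained in $N_{\RR}\times\{0\}$, equivalently by the faces of the height-$1$ complex $\Delta\cap(N_{\RR}\times\{1\})$. A point $x\in\big|\mathfrak{X}_{(\Delta,\textit{\i})}\big|$ thus lies in a unique minimal stratum, indexed by a cone $\delta_{\Delta}\in\Delta$.

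Next I would build the bijection. Given a compatible system $(x_{\Delta})_{\Delta}\in\varprojlim_{\Delta}\big|\mathfrak{X}_{(\Delta,\textit{\i})}\big|$, the faces $F_{\Delta}\mathrel{:=}\delta_{\Delta}\cap(N_{\RR}\times\{1\})$ satisfy $\mathrm{relint}(F_{\Delta'})\subseteq\mathrm{relint}(F_{\Delta})$ whenever $\Delta'$ refines $\Delta$, because minimality of the strata forces $\mathrm{relint}(\delta_{\Delta'})\subseteq\mathrm{relint}(\delta_{\Delta})$. Since $\bold{REC}_{\Sigma,\textit{\i}}$ contains arbitrarily fine locally finite subdivisions, the nested intersection $\bigcap_{\Delta}F_{\Delta}$ shrinks to a single point, giving a well-defined $v\in N_{\RR}$; in the general toric case one runs this argument inside the closure of each orbit stratum $O(\sigma)$, producing a point of $\mathfrak{Trop}\big(\textit{\i}^{-1}(O(\sigma)),\textit{\i}\big)$ and hence of $\mathfrak{Trop}(X,\textit{\i})=\bigsqcup_{\sigma}\mathfrak{Trop}(\textit{\i}^{-1}(O(\sigma)),\textit{\i})$. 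Cofinally in $\Delta$ the cone $\delta_{\Delta}$ is the minimal cone of $\Delta$ containing the ray $\tau_{v}=\RR_{\ge0}(v,1)$, and these cones are cofinal among all $\Gamma$-admissible cones containing $\tau_{v}$; so $x_{\Delta}\in(\mathfrak{X}_{\delta_{\Delta}})_{k}$ for such $\Delta$, and the system restricts to a point of $\varprojlim_{\delta\supset\tau_{v}}(\mathfrak{X}_{\delta})_{k}\cong\mathrm{in}_{v}X$ by Proposition \ref{exploded lemma 1}. In particular $\mathrm{in}_{v}X\neq\emptyset$, so $v\in\mathrm{Trop}(X,\textit{\i})$ by the Fundamental Theorem of Tropical Geometry \cite[Theorem 3.2.4]{IntroTrop}, and we send $(x_{\Delta})\mapsto\big(v,\text{the induced point of }\mathrm{in}_{v}X\big)$.

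Finally I would check bijectivity. Injectivity holds because a compatible system over the cofiltered category $\bold{REC}_{\Sigma,\textit{\i}}$ is determined by its restriction to the cofinal subsystem of $\Delta$ with $\delta_{\Delta}$ minimal over $\tau_{v}$, and that restriction is precisely the point of $\mathrm{in}_{v}X$ recovered above. For surjectivity, given $v\in\mathrm{Trop}(X,\textit{\i})$ and $p\in\mathrm{in}_{v}X=\varprojlim_{\delta\supset\tau_{v}}(\mathfrak{X}_{\delta})_{k}$ with components $p_{\delta}$, set $x_{\Delta}\mathrel{:=}p_{\delta_{\Delta}}\in(\mathfrak{X}_{\delta_{\Delta}})_{k}\subseteq\big|\mathfrak{X}_{(\Delta,\textit{\i})}\big|$, where $\delta_{\Delta}$ is the minimal cone of $\Delta$ containing $\tau_{v}$; compatibility of the $p_{\delta}$ together with the face inclusions $\delta_{\Delta'}\subseteq\delta_{\Delta}$ under refinement makes $(x_{\Delta})$ a point of $\mathrm{Ad}(X,\textit{\i})$ mapping to $(v,p)$. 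Naturality is then formal from the functoriality of the Gubler models and of $\mathrm{in}_{v}$. The step I expect to be the main obstacle is the combinatorial heart of the third and fourth paragraphs: showing that a compatible system of points determines a \emph{single} limiting vector $v$, which requires playing local finiteness of the fans in $\bold{REC}_{\Sigma,\textit{\i}}$ off against the cofinality of refinements, together with the careful bookkeeping needed to match the fibers of the resulting map with initial degenerations through Proposition \ref{exploded lemma 1}.
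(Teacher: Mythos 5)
Your proposal is correct and follows essentially the route the paper itself indicates: it reduces the underlying set of $\mathrm{Ad}(X,\textit{\i})$ to the set-theoretic inverse limit of the special fibers (as in Remark \ref{remark: Gillam} and Proposition \ref{limit}), extracts the limiting vector $v$ from the nested strata exactly as in the paper's proof of Lemma \ref{tropical complex description of adic trop}, and identifies the fiber over $v$ with $\mathrm{in}_{v}X$ via Proposition \ref{exploded lemma 1}, which is precisely the input the author says is used for this theorem. The only point deserving extra care, which you correctly flag, is the cofinality/local-finiteness argument showing that the nested faces $F_{\Delta}$ intersect in a single point of the \emph{extended} tropicalization, orbit stratum by orbit stratum.
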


\vskip .3cm


\subsection{Adic tropicalization and metrized complexes}\ 
	
	In this section, we describe the adic tropicalization of a generic hyperplane in the toric variety $\mathbb{P}^{2}$ in detail, and we make some remarks about the relationship between adic tropicalizations and the metrized complexes of O. Amini and M. Baker \cite{Amini-Baker}.

\vskip .3cm

\subsubsection{{\bf Tropical complexes and metrized complexes}}

Fix a $K$-variety $X$ and a closed embedding $\textit{\i}:X\betterhookarrow Y_{\Sigma}$ into a toric variety. Each complete fan $\Delta$ in $N_{\mathbb{R}}\!\times\!\mathbb{R}_{\ge0}$, with recession fan $\mathrm{rec}(\Delta)=\Sigma$, determines a formal $R$-scheme $\mathfrak{X}_{\Delta}$. As explained in \S\ref{specializations of adic spaces}, the associated adic space $\mathfrak{X}^{\mathrm{ad}}_{\Delta}\cong X^{\mathrm{ad}}$ comes with a specialization morphism
	$$
	\mathrm{sp}_{\mathfrak{X}_{\Delta}}\ \!:\ 
	X^{\mathrm{ad}}
	\ \lra\ 
	\mathfrak{X}_{\Delta}.
	$$
Because the Berkovich analytification $X^{\mathrm{an}}$ is the maximal Hausdorff quotient $h:X^{\mathrm{ad}}\lra\!\!\!\!\rightarrow X^{\mathrm{an}}$, we also have a map
	$$
	q\ \!:\ X^{\mathrm{ad}}\ \xrightarrow{\ \ h}\!\!\!\!\rightarrow\ X^{\mathrm{an}}\ \xrightarrow{\mathrm{trop}}\ \mathrm{Trop}(X,\textit{\i}).
	$$
Define the {\em tropical complex} ({\em of varieties}) {\em associated to $\Delta$}, denoted $\mathfrak{CX}_{\Delta}$, to be the image, inside the product of topological spaces $\mathrm{Trop}(X,\textit{\i})\times\mathfrak{X}_{\Delta}$, of the product map
	$$
	q\times\mathrm{sp}_{\mathfrak{X}_{\Delta}}
	\ \!:\ 
	X^{\mathrm{ad}}
	\ \lra\ 
	\mathrm{Trop}(X,\textit{\i})\times\mathfrak{X}_{\Delta}.
	$$
	
\vskip .3cm

\begin{example}\label{metrized complex example}
{\bf Metrized complex associated to a generic hyperplane in $\pmb{\mathbb{P}^{2}}$.}
For a concrete example, let $Y_{\Sigma}=\mathbb{P}^{2}$ with its standard toric structure, let $\textit{\i}:X\!\xymatrix{{}\ar@{^{(}->}[r]&{}}Y\!$ be the closed embedding of the hyperplane $X=V(x+y+1)$ into $\mathbb{P}^{2}$, and let $C$ be any polyhedral complex decomposing $N_{\mathbb{R}}=\mathbb{R}^{2}$ such that the intersection $C_{X}\overset{{}_{\mathrm{def}}}{=}C\cap\mathrm{Trop}(X,\textit{\i})$ is the one pictured at left in Figure \ref{metrized complex} below. Then our tropical complex $\mathfrak{CX}_{\Delta_{C}}$ is the {\em metrized complex} ({\em of curves}), in the sense of Amini and Baker \cite[\S1.2]{Amini-Baker}, pictured at right in Figure \ref{metrized complex}.

\begin{figure}[!htb]
$$
\vcenter{\hbox{
\begin{xy}
(0,0)*+{\includegraphics[scale=.75]{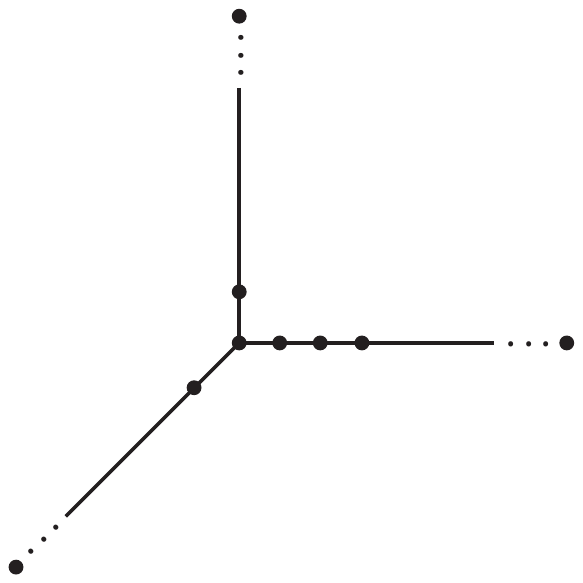}};
(4,-15)*+{C_{X}\overset{{}_{\mathrm{def}}}{=}C\cap\mathrm{Trop}(X,\textit{\i})\!\!\!\!\!\!\!\!\!\!\!\!\!};
(0,28)*+{{\color{white}.\color{black}}};
\end{xy}
}}
\ \ \ \ \ \ \ \ \ \ \ \ \ \ \ \raisebox{-14pt}{\huge $\xmapsto{\ \ \ }$}\ \ 
\vcenter{\hbox{
\begin{xy}
(0,-5)*+{\includegraphics[scale=.75]{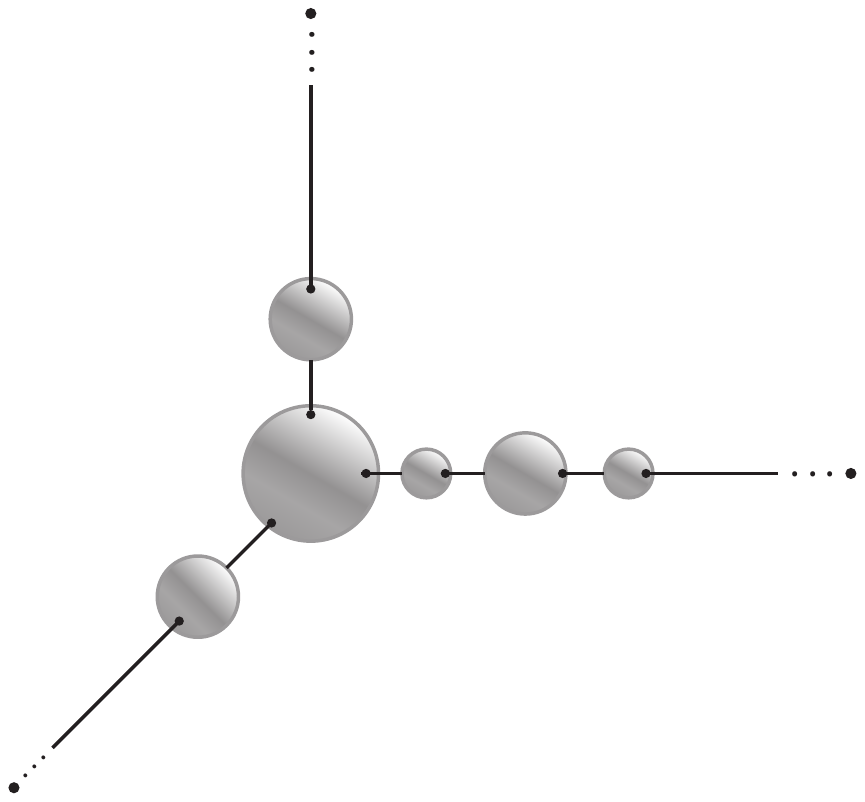}};
(-3,-20)*+{\mathfrak{CX}_{\Delta_{C}}};
\end{xy}
}}
$$
\caption{A $\Gamma$-rational polyhedral decomposition of $\mathrm{Trop}(X)$ induced by a convex polyhedral decomposition $C$ of $N_{\mathbb{R}}$, and its associated tropical complex $\mathfrak{CX}_{\Delta_{C}}$.}\label{metrized complex}
\end{figure}
\end{example}

\vskip .3cm
 	
\subsubsection{{\bf The general 1-dimesional case and metrized complexes.}}
Consider the case where our embedding $\textit{\i}:X\betterhookarrow Y_{\Sigma}$ is a closed embedding of a smooth curve $X$ into the toric variety $Y_{\Sigma}$. Recall that $\textit{\i}$ is {\em sch\"on} if the initial degeneration $X_{v}$ is smooth at every vector $v$ in the tropicalization of each torus orbit in $Y_{\Sigma}$.
	
	We claim if $Y_{\Sigma}$ is proper and $\textit{\i}$ is sch\"on, then each tropical complex $\mathfrak{CX}_{\Delta}$ is a metrized complex (of curves) in the sense of Amini and Baker \cite[\S1.2]{Amini-Baker}. Indeed, Lemma \ref{exploded lemma 1}, in conjunction with the sch\"on condition, implies that if $\delta$ is a cone in $\Delta$ whose intersection $\delta\cap\big(\mathrm{Trop}(X,\textit{\i})\times\{1\}\big)$ is a (necessarily $\Gamma$-rational) point $(v,1)\in\mathrm{Trop}(X,\textit{\i})\times\{1\}$, then the $k$-scheme underlying $\mathfrak{X}_{\delta}$ is the nonsingular curve $\mathrm{in}_{v}X$. Any point in $X^{\mathrm{ad}}$ that gets mapped to $v$ under $q:X^{\mathrm{ad}}\lra\mathrm{Trop}(X,\textit{\i})$ goes to a point of this nonsingular curve $\mathrm{in}_{v}X$ under the specialization map $\mathrm{sp}_{\mathfrak{X}_{\Delta}}\!:X^{\mathrm{ad}}\lra\mathfrak{X}_{\Delta}$. Similarly, Lemma \ref{exploded lemma 1} and the sch\"on condition imply that if $\delta\cap\big(\mathrm{Trop}(X,\textit{\i})\times\{1\}\big)$ is an edge connecting points $(u,1)$ and $(v,1)$ in $\mathrm{Trop}(X,\textit{\i})\times\{1\}$, then the $k$-scheme underlying $\mathfrak{X}_{\delta}$ is a $k$-curve with two smooth components meeting at a point, one of the components containing $\mathrm{in}_{u}X$ and the other containing $\mathrm{in}_{v}X$. The points of $\mathrm{X}^{\mathrm{ad}}$ mapping to $\mathfrak{X}_{\delta}-\big(\mathrm{in}_{u}X\cup\mathrm{in}_{v}X\big)$ under the specialization map $\mathrm{sp}_{\mathfrak{X}_{\Delta}}$ are mapped to interior points of the edge corresponding to $\delta$ inside $\mathrm{Trop}(X,\textit{\i})$.
	
	In this way, we can describe the tropical complex $\mathfrak{CX}_{\Delta}$ using exactly the kind of gluing datum that Amini and Baker use to define a metrized complex in \cite[\S1.2]{Amini-Baker}.

\vskip .3cm

\begin{example}\label{2-dimensional tropical complex}
{\bf An example of a 2-dimensional tropical complex.}
Let $X=Y_{\Sigma}=\mathbb{P}^{2}$, where $\Sigma\subset N_{\mathbb{R}}=\mathbb{R}^{2}$ is the standard fan describing $\mathbb{P}^{2}$ as a toric variety. Let $C$ be the polyhedral decomposition of $N_{\mathbb{R}}=\mathbb{R}^{2}$ pictured in Figure \ref{tropical complex} below. It determines a fan $\Delta_{C}$ in $N_{\mathbb{R}}\!\times\!\mathbb{R}_{\ge0}$ with recession fan $\mathrm{rec}(\Delta_{C})=\Sigma$. Using Lemma \ref{exploded lemma 1}, we can build the tropical complex $\mathfrak{CX}_{\Delta_{C}}$ from the datum of the $k$-varieties underlying the formal models $\mathfrak{X}_{\delta_{P}}$ associated to  each polygon $P$ in $C$.
\begin{figure}[!htb]
$$
\begin{xy}
(0,0)*+{\includegraphics[scale=.55,clip=true,trim=80pt 310pt 50pt 290pt]{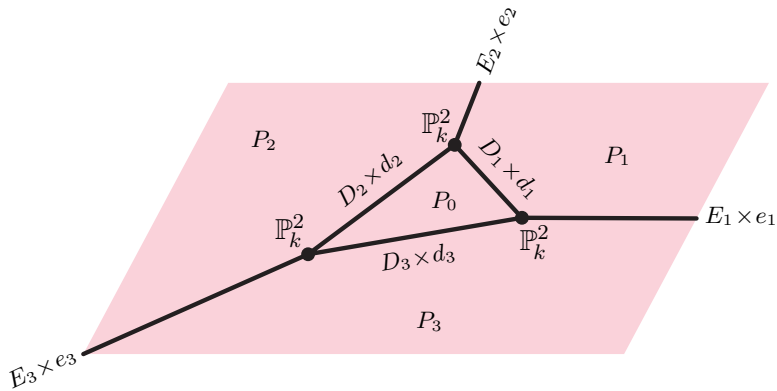}};
(1,12)*+{\mathbb{P}^{2}_{\!k}};
(14,-3.5)*+{\mathbb{P}^{2}_{\!k}};
(-18.5,-2)*+{\mathbb{P}^{2}_{\!k}};
(-8,5)*+{\rotatebox{40}{{\smaller\smaller $D_{2}\!\times\!d_{2}$}}};
(11,6)*+{\rotatebox{-45}{{\smaller\smaller $\!\!D_{1}\!\times\!d_{1}$}}};
(-1,-5.5)*+{\rotatebox{10}{{\smaller\smaller $\!\!D_{3}\!\times\!d_{3}$}}};
(9,24)*+{\rotatebox{70}{{\smaller\smaller $\!\!E_{2}\!\times\!e_{2}$}}};
(42,-0)*+{\rotatebox{0}{{\smaller\smaller $\!\!E_{1}\!\times\!e_{1}$}}};
(-51,-20)*+{\rotatebox{25}{{\smaller\smaller $\!\!E_{3}\!\times\!e_{3}$}}};
(2,2)*+{\mbox{{\smaller\smaller $P_{0}$}}};
(25,8)*+{\mbox{{\smaller\smaller $P_{1}$}}};
(-22,10)*+{\mbox{{\smaller\smaller $P_{2}$}}};
(0,-14)*+{\mbox{{\smaller\smaller $P_{3}$}}};
\end{xy}
$$
\caption{The tropical complex $\mathfrak{CX}_{\Delta_{C}}$ in Example \ref{2-dimensional tropical complex}.}
\label{tropical complex}
\end{figure}

The polyhedral complex $C$ consists of three vertices, edges $d_{i}$ and $e_{i}$ for $1\le i\le 3$, and 2-dimensional polygons $P_{j}$ for $0\le j\le 3$. The tropical complex $\mathfrak{CX}_{\Delta_{C}}$ is glued from a copy of $\mathbb{P}^{1}_{k}$ at each vertex, a product $D_{i}\!\times\!d_{i}$ with a $1$-dimensional $k$-variety $D_{i}$ associated to each edge $d_{i}$, $1\le i\le 3$, and similarly for edges $e_{i}$, and then the three $2$-dimensional polygons $P_{j}$, $1\le j\le 3$, understood as products $\mathrm{Spec}_{\ \!}k\times P_{j}$. The data telling us how to glue these topological spaces comes from the incidence relations between the irreducible components of the $k$-variety underlying the formal model $\mathfrak{X}_{\Delta_{C}}$.
\end{example}

\vskip .3cm

\begin{lemma}\label{tropical complex description of adic trop}
If $\textit{\i}:X\betterhookarrow Y_{\Sigma}$ is a closed embedding into a proper toric variety, then there is a canonical bijection $\varprojlim_{\bold{REC}_{\Sigma,\textit{\i}}}\!\!\mathfrak{CX}_{\Delta}\ \xrightarrow{\ \sim\ }\ \mathrm{Ad}(X,\textit{\i})$.
\end{lemma}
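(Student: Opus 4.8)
The plan is to exhibit $\varprojlim_{\bold{REC}_{\Sigma,\textit{\i}}}\mathfrak{CX}_\Delta$ as the graph of the tropicalization map on $\mathrm{Ad}(X,\textit{\i})$, and then to finish by projecting that graph bijectively onto $\mathrm{Ad}(X,\textit{\i})$. By construction $\mathfrak{CX}_\Delta$ is a subset of $\mathrm{Trop}(X,\textit{\i})\times\mathfrak{X}_\Delta$, and its two projections are compatible with a refinement $\rho\colon\Delta'\to\Delta$: the factor $\mathrm{Trop}(X,\textit{\i})$ is independent of $\Delta$, while the projection to $\mathfrak{X}_{\Delta'}$ is intertwined with the one to $\mathfrak{X}_\Delta$ by the induced map $f_\rho$. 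Since $\bold{REC}_{\Sigma,\textit{\i}}$ is cofiltered --- any two of its fans admit a common $\Gamma$-admissible refinement --- passing to the limit and using Definition \ref{adictrop} produces an injection $j\colon\varprojlim_\Delta\mathfrak{CX}_\Delta\hookrightarrow\mathrm{Trop}(X,\textit{\i})\times\mathrm{Ad}(X,\textit{\i})$. Theorem \ref{theorem: topology on exploded trop} identifies $\mathrm{Ad}(X,\textit{\i})$ with $\mathfrak{Trop}(X,\textit{\i})=\bigsqcup_v|\mathrm{in}_vX|$, which I would use to build the tropicalization map $p\colon\mathrm{Ad}(X,\textit{\i})\to\mathrm{Trop}(X,\textit{\i})$ collapsing each summand $|\mathrm{in}_vX|$ to the point $v$. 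The lemma then comes down to showing that the image of $j$ is exactly the graph $\{(p(\eta),\eta):\eta\in\mathrm{Ad}(X,\textit{\i})\}$, after which $\mathrm{pr}_2\circ j$ is the asserted bijection.

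To identify $\mathrm{image}(j)$ with the graph of $p$, I would work stratum by stratum in the Gubler models. For fixed $\Delta$, the special fiber of $\mathfrak{X}_\Delta$ is a union of toric $k$-varieties whose torus orbits are disjoint and indexed by the faces of the height-one complex $C_\Delta=\Delta\cap(N_\RR\times\{1\})$, a smaller face $\tau$ giving a larger orbit $S_\tau$; by Proposition \ref{exploded lemma 1}, $S_{\{v\}}=\mathrm{in}_vX$ for a vertex $\{v\}$, and, combined with Theorem \ref{theorem: topology on exploded trop}, Proposition \ref{exploded lemma 1} says that the $\Delta$-coordinate $x_\Delta$ of a point $\eta\in\mathrm{Ad}(X,\textit{\i})$ lies in $S_{\tau_\Delta(v)}$, where $v=p(\eta)$ and $\tau_\Delta(v)$ is the unique face of $C_\Delta$ whose relative interior contains $v$. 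The one further geometric input is the ``reduction equals tropicalization'' property of Gubler's toric $R$-schemes:
\begin{itemize}
\item[$(\ast)$] for every $v\in\mathrm{Trop}(X,\textit{\i})$ and every $\Delta$, the specialization morphism $\mathrm{sp}_{\mathfrak{X}_\Delta}$ carries $q^{-1}(v)\subseteq X^{\mathrm{ad}}$ \emph{onto} the stratum $S_{\tau_\Delta(v)}$.
\end{itemize}
Granting $(\ast)$, the two containments are immediate. If $\eta\in\mathrm{Ad}(X,\textit{\i})$ and $v=p(\eta)$, then $x_\Delta\in S_{\tau_\Delta(v)}=\mathrm{sp}_{\mathfrak{X}_\Delta}\!\big(q^{-1}(v)\big)$ for every $\Delta$, so $(v,x_\Delta)\in\mathfrak{CX}_\Delta$ for every $\Delta$; these pairs are compatible, since $(x_\Delta)_\Delta$ already is, and so assemble into a point of $\varprojlim_\Delta\mathfrak{CX}_\Delta$ that $j$ sends to $(p(\eta),\eta)$. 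Conversely, a point of $\varprojlim_\Delta\mathfrak{CX}_\Delta$ has the form $(v,(x_\Delta)_\Delta)$ with $x_\Delta\in\mathrm{sp}_{\mathfrak{X}_\Delta}(q^{-1}(v))=S_{\tau_\Delta(v)}$ for every $\Delta$; since distinct strata are disjoint, $v$ must be a point whose minimal $C_\Delta$-face carries $x_\Delta$'s stratum for all $\Delta$, and Theorem \ref{theorem: topology on exploded trop} together with Proposition \ref{exploded lemma 1} and the density of $\Gamma$ in $\RR$ ($\Gamma$ being divisible, as $K$ is algebraically closed) shows that such a $v$ is uniquely determined, hence equals $p\big((x_\Delta)_\Delta\big)$. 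Thus $\mathrm{image}(j)$ is the graph of $p$, as wanted.

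The step I expect to be the main obstacle is $(\ast)$. Over the dense torus of $Y_\Sigma$ it is essentially the content of Proposition \ref{exploded lemma 1} --- a point of $X^{\mathrm{ad}}$ tropicalizing to $v$ reduces into $\mathrm{in}_vX$, or into its generizing strata when $v$ is not $\Gamma$-rational --- but extending it, with the surjectivity ``onto $S_{\tau_\Delta(v)}$'' included, over the toric boundary strata $\textit{\i}^{-1}\big(O(\sigma)\big)$, $\sigma\in\Sigma$, requires the structure theory of toric $R$-models developed by Gubler and Soto in \cite{Gub} and \cite{GS}. Since the present paper is expository, I would give only a sketch of $(\ast)$ and refer to \cite{FP} for the details; once $(\ast)$ is in hand, the rest of the argument --- including the passage to the inverse limit --- is formal, since the preimage under $j$ of a point of the graph is written down explicitly from the coordinates of $\eta$, so no inverse-limit-of-surjections subtlety arises.
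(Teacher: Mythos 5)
Your argument is correct and follows essentially the same route as the paper's: both identify a point of $\varprojlim_{\bold{REC}_{\Sigma,\textit{\i}}}\mathfrak{CX}_{\Delta}$ with its compatible system of special-fiber coordinates $(x_{\Delta})$, observe that disjoint cones give disjoint strata so that under refinement this system pins down the tropical coordinate $v$ uniquely, and conclude that the projection to $\mathrm{Ad}(X,\textit{\i})$ is a bijection. The only real difference is presentational: you isolate the fiberwise surjectivity of specialization onto strata as an explicit input $(\ast)$, whereas the paper's sketch uses the same fact implicitly when it assumes the existence of a point $x'$ of $X^{\mathrm{ad}}$ specializing to the given compatible system.
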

\begin{proof}
Each projection $\mathrm{Trop}(X,\textit{\i})\times\mathfrak{X}_{\Delta}\lra\!\!\!\!\rightarrow\mathfrak{X}_{\Delta}$ induces a map $\mathfrak{CX}_{\Delta}\lra\mathfrak{X}_{\Delta}$, and these maps give rise to a map
	\begin{equation}\label{comparison map tropical complex to adic trop}
	\varprojlim_{\bold{REC}_{\Sigma,\textit{\i}}}\!\!\mathfrak{CX}_{\Delta}\ \lra\ \mathrm{Ad}(X,\textit{\i}).
	\end{equation}
	
	Fix a compatible system of points $x=(x_{\Delta})_{\Delta\in\mathrm{Rec}_{\Sigma,\textit{\i}}}$, where $x_{\Delta}$ lies in the $k$-variety underlying $\mathfrak{X}_{\Delta}$. This system describes a point $x$ in $\mathrm{Ad}(X,\textit{\i})$. Because $\mathfrak{X}_{\delta_{1}}\cap\mathfrak{X}_{\delta_{2}}=\mbox{\O}$ in $\mathfrak{X}_{\Delta}$ whenever $\delta_{1}\cap\delta_{2}=\mbox{\O}$ in $N_{\mathbb{R}}\!\times\!\mathbb{R}_{\ge0}$, we know that the compatible system $x$ has an associated point $v_{x}\in\mathrm{Trop}(X,\textit{\i})$, determined by the condition that $v_{x}$ lie in the polygon $P_{\delta}$ associated to a cone $\delta$ in $\Delta$ whenever $x_{\Delta}$ lies in $\mathfrak{X}_{\delta}$. Here $P_{\delta}$ denotes the unique polygon in $N_{\mathbb{R}}$ satisfying $P_{\delta}\!\times\!\{1\}=\delta\cap\big(N_{\mathbb{R}}\!\times\!\{1\}\big)$. The very definition of $v_{x}$ implies that if $x'$ is any point of $\mathfrak{X}^{\mathrm{ad}}$ mapping to $x$ under $\mathrm{sp}_{\mathfrak{X}_{\Delta}}:X^{\mathrm{ad}}\lra\mathfrak{X}_{\Delta}$, then $x'$ maps to $v_{x}$ under the map $q:X^{\mathrm{ad}}\lra\mathrm{Trop}(X,\textit{\i})$. This implies subjectivity of the comparison map (\ref{comparison map tropical complex to adic trop}).
	
	To see that (\ref{comparison map tropical complex to adic trop}) is injective, note that the argument of the previous paragraph implies that points given by pairs $(x,u)$ and $(y,v)$ in $\mathfrak{CX}_{\Delta}$ must map to distinct points of $\mathrm{Ad}(X,\textit{\i})$ if their second coordinates $u,v\in\mathrm{Trop}(X,\textit{\i})$ differ. On the other hand, if $u=v$, but the inverse systems $y=(y_{\Delta})_{\Delta\in\mathrm{Rec}_{\Sigma,\textit{\i}}}$ and $x=(x_{\Delta})_{\Delta\in\mathrm{Rec}_{\Sigma,\textit{\i}}}$ differ, then they map to distinct points of $\mathrm{Ad}(X,\textit{\i})$.
\end{proof}

\vskip .3cm

\begin{example}
{\bf The adic tropicalization of a generic hyperplane in $\pmb{\mathbb{P}^{2}}$.}
As in Example \ref{metrized complex example} above, let $\textit{\i}:X\!\xymatrix{{}\ar@{^{(}->}[r]&{}}Y\!$ be the closed embedding of the hyperplane $X=V(x+y+1)$ into $\mathbb{P}^{2}$. Lemma \ref{tropical complex description of adic trop} says that we can figure out how $\mathrm{Ad}(X,\textit{\i})$ looks by undrestanding it as the object that results, in the inverse limit, when we bubble of a copy of $\mathbb{P}^{1}_{k}$ at every $\Gamma$-rational point in $\mathrm{Trop}(X,\textit{\i})$. The resulting topological space appears in Figure \ref{adic trop drawing} below.

\begin{figure}[!htb]
$$
\includegraphics[scale=.625]{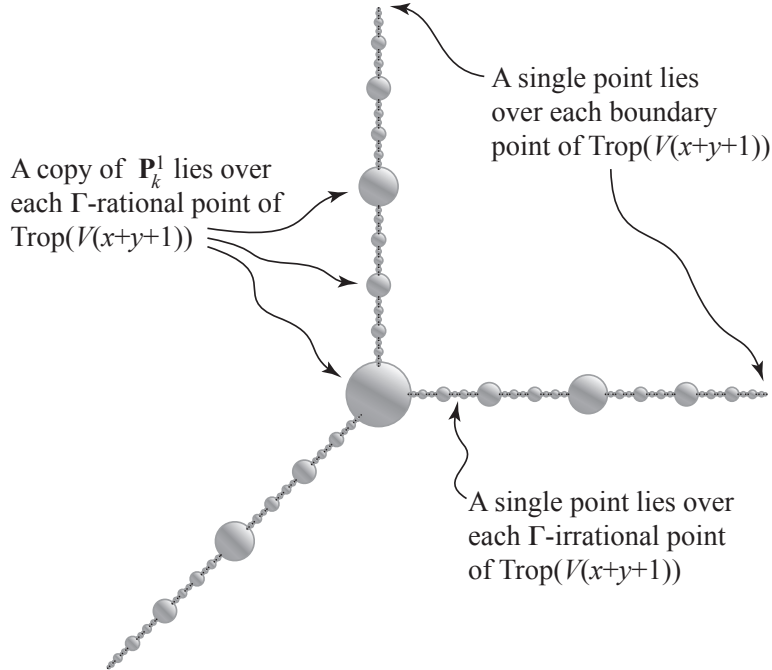}
$$
\caption{Adic tropicalization of the hyperplane $V(x+y+1)$ in $\mathbb{P}^{2}$. Compare this to the exploded curve, in the sense of B. Parker \cite{Parker2}, appearing in Figure \ref{figure: exploded curve}.}
\label{adic trop drawing}
\end{figure}
\end{example}

\vskip .3cm

\subsubsection{{\bf Tropical ``type 5" points and inverse systems of $R$-models.}}\label{tropical type five points}
We take a moment to highlight a phenomenon that occurs in adic tropicalizations, related to the presence of ``type 5" points in the adic affine line $(\mathbb{A}^{1})^{\mathrm{ad}}$.
	
	Consider a polygonal interval $P_{1}=\begin{tikzpicture}\draw[black, thick] (0,0) -- (1,0);\fill[black] (0,0) circle (.08);\fill[black] (1,0) circle (.08);\end{tikzpicture}$ inside $N_{\mathbb{R}}=\mathbb{R}$. The $k$-variety underlying the model $\mathfrak{X}_{P_{1}}$ consists of two copies of $\mathbb{A}^{1}_{k}$ intersecting at a single $k$-point $x_{1}$ as pictured in Figure \ref{bubbling off} below. If we subdivide $P_{1}$ by adding a single $\Gamma$-rational vertex in the interior of $P_{1}$, then the $k$-variety underlying our formal $R$-scheme picks up a $\mathbb{P}^{1}_{\!k}$ component where $x_{1}$ used to be.
	\begin{figure}[!htb]
	$$
	\scalebox{.95}{$
	\begin{array}{rcl}
	\begin{xy}
	(0,0)*+{\begin{tikzpicture}
		\draw[red, very thick] (0,0) -- (2,0);
		\fill[red] (0,0) circle (.075);
		\fill[red] (2,0) circle (.075);
	\end{tikzpicture}};
	(0,3)*+{{\color{red} P_{1}}};
	(-10,-3)*+{{\color{red} v}};
	\end{xy}
	&
	\ \ \ \ \ \ \ \mbox{\larger\larger $\xmapsto{\ \ \ }$}\ \ \ \ \ \ 
	&
	\begin{xy}
	(0,0)*+{\includegraphics[scale=.75]{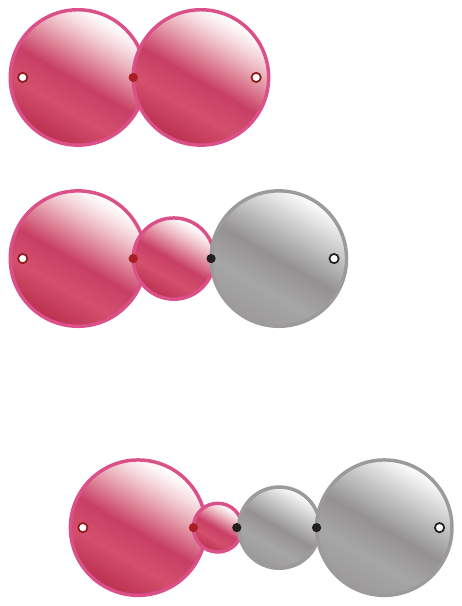}};
	(0,8.5)*+{{\color{red} \overset{\mbox{$\mathfrak{X}_{\!P_{1}}$}}{{\color{pink}\overbrace{\ \ \ \ \ \ \ \ \ \ \ \ }}}}};
	(-8.5,-6)*+{{\color{red} \mathfrak{X}_{v}}};
	(-1,-4.5)*+{\begin{tikzpicture}
		\draw[black, thick,->] (0,.25) to [out=0,in=-130] (0,1);
	\end{tikzpicture}};
	(-3,-9)*+{\mbox{{\smaller $x_{1}$}}};
	\end{xy}
	\\[30pt]
	\begin{xy}
	(0,0)*+{\begin{tikzpicture}
		\draw[red, very thick] (0,0) -- (1,0);
		\draw[black, very thick] (1,0) -- (2,0);
		\fill[red] (0,0) circle (.075);
		\fill[black] (2,0) circle (.075);
		\fill[red] (1,0) circle (.075);
	\end{tikzpicture}};
	(-4.5,3)*+{{\color{red} P_{2}}};
	(-10,-3)*+{{\color{red} v}};
	\end{xy}
	&
	\ \ \ \ \ \ \ \mbox{\larger\larger  $\xmapsto{\ \ \ }$}\ \ \ \ \ \ 
	&
	\begin{xy}
	(0,0)*+{\includegraphics[scale=.75]{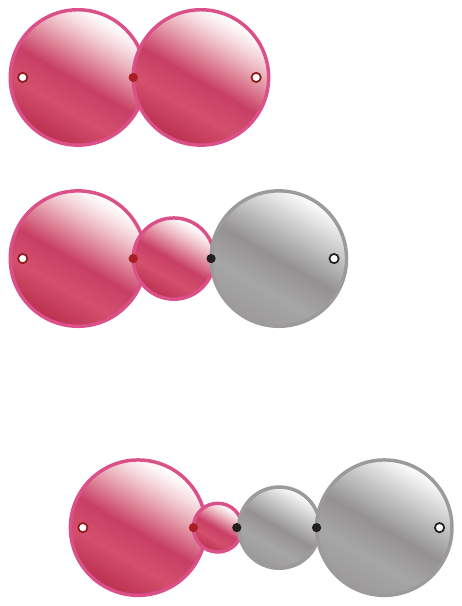}};
	(-3,7.5)*+{\rotatebox{-15}{\color{red} $\overset{\mbox{$\mathfrak{X}_{\!P_{2}}$}}{{\color{pink}\overbrace{\ \ \ \ \ \ \ \ \ \ }}}$}};
	(-11.5,-6)*+{{\color{red} \mathfrak{X}_{v}}};
	(-3.5,-4.5)*+{\begin{tikzpicture}
		\draw[black, thick,->] (0,.25) to [out=0,in=-130] (0,1);
	\end{tikzpicture}};
	(-5.5,-9)*+{\mbox{{\smaller $x_{2}$}}};
	\end{xy}
	\\[30pt]
	\begin{xy}
	(0,0)*+{\begin{tikzpicture}
		\draw[red, very thick] (0,0) -- (.5,0);
		\draw[black, very thick] (.5,0) -- (2,0);
		\fill[red] (0,0) circle (.075);
		\fill[black] (2,0) circle (.075);
		\fill[black] (1,0) circle (.075);
		\fill[red] (.5,0) circle (.075);
	\end{tikzpicture}};
	(-7.25,3)*+{{\color{red} P_{3}}};
	(-10,-3)*+{{\color{red} v}};
	\end{xy}
	&
	\ \ \ \ \ \ \ \mbox{\larger\larger  $\xmapsto{\ \ \ }$}\ \ \ \ \ \ 
	&
	\begin{xy}
	(0,0)*+{\includegraphics[scale=.75]{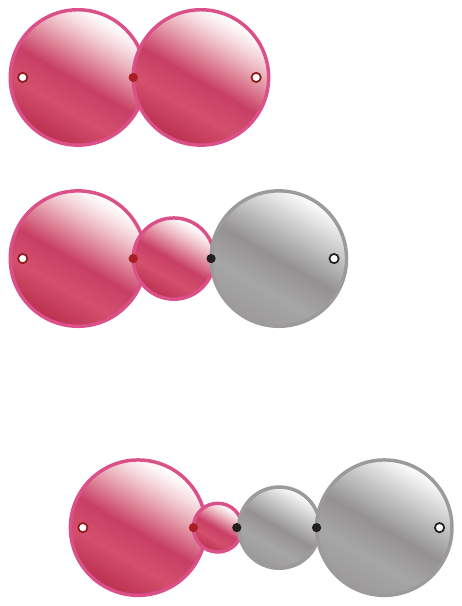}};
	(-5,7)*+{\rotatebox{-25}{\color{red} $\overset{\mbox{$\mathfrak{X}_{\!P_{3}}$}}{{\color{pink}\overbrace{\ \ \ \ \ \ \ }}}$}};
	(-13.5,-6)*+{{\color{red} \mathfrak{X}_{v}}};
	(-6,-4.5)*+{\begin{tikzpicture}
		\draw[black, thick,->] (0,.25) to [out=0,in=-130] (0,1);
	\end{tikzpicture}};
	(-8,-9)*+{\mbox{{\smaller $x_{3}$}}};
	\end{xy}
	\\[30pt]
	\mbox{{\larger\larger $\vdots$}}\ \ \ \ \ \ \ \ \ \!
	&
	\mbox{{\larger\larger $\vdots$}}
	&
	\ \ \ \ \ \ \!\mbox{{\larger\larger $\vdots$}}
	\end{array}
	$}
	$$
	\caption{The formal $R$-schemes associated to several decompositions of the interval $P_{1}$ inside $N_{\mathbb{R}}=\mathbb{R}$. For each decomposition, the $k$-variety underlying the $R$-scheme $\mathfrak{X}_{v}$ associated to the left endpoint $v$ is a copy of $\mathbb{A}^{1}_{k}\!-\!\{0\}$ that does not change from decomposition to decomposition. The nodal point $x_{i}$ that partially compactifies $\mathbb{A}^{1}_{k}\!-\!\{0\}$ in each of the $R$-schemes ``stays fixed" as further decompositions move every $\mathbb{P}^{1}_{\!k}$ component further and further from $\mathfrak{X}_{v}$.}
	\label{bubbling off}
	\end{figure}
	
Clearly $\mathfrak{X}_{v}$ sits in the inverse limit of $R$-models associated to all subdivisions of the interval, but the inverse limit also contains a point in the closure of $\mathfrak{X}_{v}$, which does {\em not} map to the component of any model associated to a segment {\em not} containing $v$. The sequence of points $x_{i}$ in Figure \ref{bubbling off} form an inverse system, and hence determine a single point $x$ in the adic tropicalization $\mathrm{Ad}(\mathbb{P}^{1})$.

\vskip .75cm


\section{The adic limit Theorem}\label{Section-Main Theorem}

	In this final \S\ref{Section-Main Theorem}, we explain that adic tropicalizations satisfy a form of ``analytification is the limit of all tropicalizations," namely, that one can recover the Huber analytification $X^{\mathrm{ad}}$ of any closed subvariety of a proper toric $K$-variety, along with the sheaf of power bound sections $\mathscr{O}^{\circ}_{\!X^{\mathrm{ad}}}$ on $X^{\mathrm{ad}}$, from an inverse limit of adic tropicalizations of $X$. The exact statement appears in Theorem \ref{Huberlimit} and Proposition \ref{Nice system, man!} below. We begin with a brief review of the corresponding theorem in the setting of Berkovich analytic spaces, as proved in \cite{P09} and \cite{FGP}.
	

\subsection{The limit theorem in the Berkovich setting}\label{systems}\ 

Since each multiplicative seminorm $K[U_{\sigma}]\lra\mathbb{R}_{\geq0}$ induces a homomorphism $S_{\sigma}\lra\mathbb{R}\sqcup\{\infty\}$ of semigroups,
each closed embedding
$
\textit{\i}:X\xymatrix{{}\ar@{^{(}->}[r]&{}}Y_{\Sigma}
$
comes with a continuous map of topological spaces
$$
\pi_{\textit{\i}}:\an{X}\lra\Trop{X}{\textit{\i}},
$$
called the {\em tropicalization map} associated to the closed embedding $\textit{\i}$.

If $f_{\phi}:Y_{\Sigma}\lra Y_{\Sigma'}$ is a toric morphism, necessarily induced by morphisms $\phi:\Sigma\lra\Sigma'$ of fans, and if
$
\textit{\i}:X\xymatrix{{}\ar@{^{(}->}[r]&{}}Y_{\Sigma}
$
and
$
\textit{\i}':X\xymatrix{{}\ar@{^{(}->}[r]&{}}Y_{\Sigma'}
$
are closed embeddings for which the diagram
\begin{equation}\label{systemmorphism}
\begin{aligned}
\begin{xy}
(0,0)*+{X}="1";
(15,7)*+{Y_{\Sigma}}="2";
(15,-7)*+{Y_{\Sigma'}}="3";
{\ar@{^{(}->}^{\textit{\i}} "1"; "2"};
{\ar@{^{(}->}_{\textit{\i}'} "1"; "3"};
{\ar@{->}^{f_{\phi}} "2"; "3"};
\end{xy}
\end{aligned}
\end{equation}
commutes, then the corresponding tropicalization maps $\pi_{\!\textit{\i}}$ and $\pi_{\textit{\i}'}$ fit into a commutative diagram
\begin{equation}\label{tropmorphism}
\begin{aligned}
\ \ \ \ \ \ 
\begin{xy}
(0,0)*+{\an{X}\!\!}="1";
(20,10)*+{\Trop{X}{\textit{\i}}}="2";
(20,-10)*+{\Trop{X}{\textit{\i}'}}="3";
{\ar@{->}^{\pi_{\!\textit{\i}}} "1"; "2"};
{\ar@{->}_{\pi_{\!\textit{\i}'}} "1"; "3"};
{\ar@{->}^{\mathrm{Trop}(f_{\phi})} "2"; "3"};
\end{xy}
\end{aligned}
\end{equation}

Let $\mathcal{S}$ denote any small category whose set of objects consists of some family of closed embeddings $\textit{\i}:X\xymatrix{{}\ar@{^{(}->}[r]&{}}Y_{\Sigma}$,
and in which each homset $\mathrm{Hom}_{\mathcal{S}}(\textit{\i},\textit{\i}')$ consists of all commutative diagrams of the above form (\ref{systemmorphism}). Then the system of tropicalization maps $\pi_{\textit{\i}}$ associated objects and morphisms in $\mathcal{S}$ induces a map
\begin{equation}\label{Berkcompmap}
\pi\ \overset{\mathrm{def}}{=}\ \varprojlim_{\mathcal{S}}\pi_{\textit{\i}}\ \!:\ \an{X}\lra\varprojlim_{\mathcal{S}}\Trop{X}{\textit{\i}}
\end{equation}
We call this map the {\em Berkovich comparison map}.

In \cite{P09}, S. Payne showed that if $X$ admits at least one closed embedding into a quasi-projective toric variety $Y_{\Sigma}$, and if $\mathcal{S}$ is the system of all closed embeddings into quasiprojective toric varieties, then the Berkovich comparison map (\ref{Berkcompmap}) is a homeomorphism.
In \cite{FGP}, P. Gross, S. Payne, and the present author extended this result to the case of any $K$-scheme admitting at least one closed embedding into a toric variety. This result is a consequence of the following Theorem \ref{Berkovich}, which describes conditions under which an arbitrary system $\mathcal{S}$ of closed toric embeddings gives rise to a homeomorphism (\ref{Berkcompmap}):

\vskip .4cm

\begin{theorem}\label{Berkovich}
{\bf \cite[Theorem 1.1]{FGP}}\ \ 
If the system $\mathcal{S}$ satisfies conditions {\bf (C1)} and {\bf (C2)} below, then the Berkovich comparison map (\ref{Berkcompmap}) is a homeomorphism:
\vskip .3cm
\begin{itemize}
\item[{\bf (C1)}]
If $\textit{\i}:X\xymatrix{{}\ar@{^{(}->}[r]&{}}Y_{\Sigma}$ and $\textit{\i}':X\xymatrix{{}\ar@{^{(}->}[r]&{}}Y_{\Sigma'}$ are closed embeddings in $\mathcal{S}$, then their product
$
\textit{\i}\times\textit{\i}':X\xymatrix{{}\ar@{^{(}->}[r]&{}}Y_{\Sigma\times\Sigma'}
$
is also in $\mathcal{S}$;
\item[{\bf (C2)}]
\vskip .2cm
There exists a finite affine open cover $\big\{\!\xymatrix{U_{i}\ar@{^{(}->}[r]&X}\!\big\}$ such that for each nonzero regular function $f\in K[U_{i}]$, there is some closed embedding
$
\textit{\i}:X\xymatrix{{}\ar@{^{(}->}[r]&{}}Y_{\Sigma}
$
in $\mathcal{S}$ that realizes $U_{i}$ as the preimage of a torus-invariant open affine, and realizes $f$ as the pullback of a monomial.
\end{itemize}
\end{theorem}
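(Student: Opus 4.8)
\noindent\emph{Proof proposal.} The plan is to prove that the Berkovich comparison map $\pi$ of (\ref{Berkcompmap}) is a continuous bijection, and then to upgrade this to a homeomorphism using the compactness of the fibers of the individual tropicalization maps $\pi_{\textit{\i}}$. Continuity is immediate, since $\pi$ is the canonical map into an inverse limit induced by the continuous maps $\pi_{\textit{\i}}$. The two substantive inputs are condition \textbf{(C2)}, which forces injectivity, and condition \textbf{(C1)}, which forces surjectivity; the final topological step will use both.

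For injectivity, suppose $x,y\in\an{X}$ satisfy $\pi(x)=\pi(y)$. First I would check that $x$ and $y$ lie over a common member $U_{i}$ of the affine cover in \textbf{(C2)}: the torus-orbit stratification of any $Y_{\Sigma}$ is detected tropically, so for an embedding realizing $U_{i}$ as the preimage of a torus-invariant affine, $\pi_{\textit{\i}}$ separates the points lying over $U_{i}$ from the rest, and hence so does $\pi$. Viewing $x$ and $y$ now as multiplicative seminorms on $K[U_{i}]$, fix any nonzero $f\in K[U_{i}]$; by \textbf{(C2)} there is a closed embedding $\textit{\i}\in\mathcal{S}$ in which $f$ is the pullback of a monomial on $Y_{\Sigma}$, so the number $-\log|f(x)|$ is recorded by one of the coordinates of $\pi_{\textit{\i}}(x)=\pi_{\textit{\i}}(y)$, giving $|f(x)|=|f(y)|$. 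Since $f$ was arbitrary, the two seminorms agree and $x=y$.

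For surjectivity, fix a compatible system $v=(v_{\textit{\i}})$ in $\varprojlim_{\mathcal{S}}\Trop{X}{\textit{\i}}$ and set $F_{\textit{\i}}=\pi_{\textit{\i}}^{-1}(v_{\textit{\i}})\subseteq\an{X}$. Each $F_{\textit{\i}}$ is nonempty, since the Fundamental Theorem of Tropical Geometry identifies $\Trop{X}{\textit{\i}}$ with the image of $\pi_{\textit{\i}}$, and each $F_{\textit{\i}}$ is compact, being a closed subset of the preimage in $\an{\mathbb{T}}$ of a point of $N_{\RR}$, which is a closed polyannulus. Condition \textbf{(C1)} makes the family $\{F_{\textit{\i}}\}$ directed downward: for $\textit{\i}_{1},\dots,\textit{\i}_{k}$ in $\mathcal{S}$ the iterated product $\textit{\i}_{1}\times\cdots\times\textit{\i}_{k}$ again lies in $\mathcal{S}$, and the commuting triangles (\ref{tropmorphism}), together with compatibility of $v$, give $F_{\textit{\i}_{1}\times\cdots\times\textit{\i}_{k}}\subseteq F_{\textit{\i}_{1}}\cap\cdots\cap F_{\textit{\i}_{k}}$ with the left side nonempty. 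Fixing any $\textit{\i}_{0}\in\mathcal{S}$, the sets $F_{\textit{\i}}\cap F_{\textit{\i}_{0}}$ are then nonempty closed subsets of the compact Hausdorff space $F_{\textit{\i}_{0}}$ with the finite intersection property, so $\bigcap_{\textit{\i}}F_{\textit{\i}}\neq\emptyset$, and any point of this intersection maps to $v$ under $\pi$.

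To promote the continuous bijection $\pi$ to a homeomorphism it suffices to show it is open, and since the inverse-limit topology has a basis of cylinder sets pulled back from the factors $\Trop{X}{\textit{\i}}$, this reduces to the following: for every open $W\subseteq\an{X}$ and every $x\in W$ there is a single $\textit{\i}\in\mathcal{S}$ with $\pi_{\textit{\i}}^{-1}(\pi_{\textit{\i}}(x))\subseteq W$. I would obtain this by rerunning the compactness argument above on the closed sets $\pi_{\textit{\i}}^{-1}(\pi_{\textit{\i}}(x))\cap(\an{X}\setminus W)$: after intersecting with a fixed compact fiber these are compact, they are directed downward by \textbf{(C1)}, and their total intersection is empty by injectivity, so one of them already is. Since each $\pi_{\textit{\i}}$ is a proper, hence closed, map onto the locally compact Hausdorff space $\Trop{X}{\textit{\i}}$, the inclusion $\pi_{\textit{\i}}^{-1}(\pi_{\textit{\i}}(x))\subseteq W$ then inflates to $\pi_{\textit{\i}}^{-1}(V)\subseteq W$ for the open neighborhood $V=\Trop{X}{\textit{\i}}\setminus\pi_{\textit{\i}}(\an{X}\setminus W)$ of $\pi_{\textit{\i}}(x)$, and the cylinder over $V$ is the required open neighborhood of $\pi(x)$ contained in $\pi(W)$. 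I expect the main obstacle to be the surjectivity step, and inside it the interplay between the product construction in \textbf{(C1)} and the boundary strata of the toric varieties $Y_{\Sigma}$: there the relevant tropicalizations live in a partial compactification of $N_{\RR}$ rather than in $N_{\RR}$ itself, and one must verify both that the fibers $F_{\textit{\i}_{1}\times\cdots\times\textit{\i}_{k}}$ stay nonempty and that they really map into the intersection of the factor fibers.
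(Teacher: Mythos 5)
Note first that this paper does not prove Theorem \ref{Berkovich}: it is quoted, statement and all, from \cite[Theorem 1.1]{FGP}, so the comparison below is against the published proof there (and its antecedent in \cite{P09}) rather than against anything in this text.

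Your treatment of bijectivity follows the published argument quite closely and is sound. For injectivity, the reduction to a common chart $U_{i}$ via the fact that $U_{i}^{\mathrm{an}}=\pi_{\textit{\i}}^{-1}\big(\mathrm{Trop}(X,\textit{\i})\cap N_{\RR}(\sigma)\big)$ is exactly the point where the \emph{extended} tropicalization (recording the orbit stratification) is needed, and then {\bf (C2)} separates seminorms on $K[U_{i}]$ coordinate by coordinate. For surjectivity, the combination of nonempty compact fibers, downward directedness of the $F_{\textit{\i}}$ via {\bf (C1)} and the commuting triangles (\ref{tropmorphism}), and the finite intersection property is precisely the mechanism used in \cite{P09} and \cite{FGP}; the one loose end, which you correctly flag yourself, is that for $v_{\textit{\i}}$ in a boundary stratum the fiber is compact not because it sits over a point of $N_{\RR}$ but because the extended tropicalization map $Y_{\Sigma}^{\mathrm{ad\,ded}}$, i.e.\ $Y_{\Sigma}^{\mathrm{an}}\lra N_{\RR}(\Sigma)$, is topologically proper; this is standard but must be cited. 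Where you genuinely diverge is the final step. The published route is shorter: since the topology on each affinoid piece of $X^{\mathrm{an}}$ is the initial topology for the evaluation maps $x\mapsto|f(x)|$, condition {\bf (C2)} (realizing each $U_{i}$ as a preimage of a torus-invariant affine and each $f$ as a monomial) makes the comparison map a topological embedding essentially by definition, and bijectivity then finishes the proof. Your alternative --- deducing openness from the emptiness of some $\pi_{\textit{\i}}^{-1}(\pi_{\textit{\i}}(x))\cap(X^{\mathrm{an}}\setminus W)$ by another directed-compactness argument, and then inflating to a saturated open neighborhood using that each $\pi_{\textit{\i}}$ is proper onto a locally compact Hausdorff target and hence closed --- is correct but longer, and it buys nothing beyond self-containment; what the initial-topology argument buys is that it makes transparent why {\bf (C2)} is the exact hypothesis needed for the topologies, not just the point sets, to match.
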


\vskip .3cm


\subsection{Adic tropicalization morphisms}\label{Adic tropicalization morphisms}\ 

Fix a $K$-variety $X$, and let 
$
\xymatrix{
\textit{\i}:X
\ar@{^{(}->}[r]
&
Y_{\Sigma}
}
$
be a closed embedding into a proper toric variety $Y_{\Sigma}$. As explained in \S\ref{subsection: formal Gubler models}, each $\Gamma$-admissible fan $\Delta$ in $N_{\mathbb{R}}\times\mathbb{R}_{\ge0}$, satisfying
	\begin{equation}\label{inset: two conditions}
	\mathrm{supp}(\Delta)\ =\ N_{\RR}\times\RR_{\ge0}
	\ \ \ \ \ \ \ \ \mbox{and}\ \ \ \ \ \ \ \ 
	\Delta\cap\big(N_{\RR}\times\{0\}\big)
	\ \ =\ \ \Sigma\times\{0\},
	\end{equation}
gives rise to an admissible formal model $\mathfrak{X}_{(\textit{\i},\Delta)}$ of $X$. By \ref{specializations of adic spaces}, this model $\mathfrak{X}_{(\textit{\i},\Delta)}$ comes with a morphism of locally topologically ringed spaces
	\begin{equation}\label{inset: second appearance of specialization}
\mathrm{sp}_{\mathfrak{X}_{(\textit{\i},\Delta)}}:\big(\ad{X},\mathscr{O}^{\ \!\!\circ}_{\ad{X}}\big)\lra\big(\mathfrak{X}_{(\textit{\i},\Delta)},\mathscr{O}_{\mathfrak{X}_{(\textit{\i},\Delta)}}\big).
	\end{equation}
Our very construction of the adic tropicalization $\mathrm{Ad}(X,\textit{\i})$ as an inverse limit implies that as $\Delta$ ranges over all $\Gamma$-admissible fans in $N_{\RR}\times\RR_{\ge0}$ satisfying the twin conditions (\ref{inset: two conditions}), the resulting specialization morphisms (\ref{inset: second appearance of specialization}) give rise to a single morphism of locally topologically ringed spaces
	\begin{equation}\label{adic tropicalization morphism}
	\varpi_{\textit{\i}}
	\ \ \overset{\mathrm{def}}{=}
	\varprojlim_{\ \bold{REC}_{\Sigma,\textit{\i}}}\mathrm{sp}_{(\textit{\i},\Delta)}\ \ :\ \ (\ad{X},\mathscr{O}^{\ \!\!\circ}_{\ad{X}})\lra\big(\mathrm{Ad}(X,\textit{\i}),\ \!\mathscr{O}_{\mathrm{Ad}(X,\textit{\i})}\big).
	\end{equation}
This morphism (\ref{adic tropicalization morphism}) is the adic counterpart to the tropicalization map
	$$
	\pi_{\textit{\i}}:\an{X}\lra\mathrm{Trop}(X,\textit{\i}),
	$$
and so we refer to (\ref{adic tropicalization morphism}) as the {\em adic tropicalization morphism} at $\textit{\i}$.

\begin{definition} A {\em system of embeddings} for $X$ is any category $\mathcal{S}$ such that $\mathrm{ob}_{\ \!}\mathcal{S}$ is some class of closed embeddings $\textit{\i}:X\mono Y_{\Sigma}$ into proper toric varieties, and where the set of morphisms between two such embeddings is the set of all commutative diagrams
	\begin{equation}\label{system 2}
	\begin{aligned}
	\begin{xy}
	(0,0)*+{X}="1";
	(15,7)*+{Y_{\Sigma}}="2";
	(15,-7)*+{Y_{\Sigma'}}="3";
	{\ar@{^{(}->}^{\textit{\i}} "1"; "2"};
	{\ar@{^{(}->}_{\textit{\i}'} "1"; "3"};
	{\ar@{->}^{f_{\phi}} "2"; "3"};
	\end{xy}
	\end{aligned}
	\end{equation}
where $f_{\phi}$ is any torus-equivariant morphism induced by a morphism $\phi:\Sigma\lra\Sigma'$ of fans.
\end{definition}

\begin{remark}
If $\Sigma$ and $\Sigma'$ are complete fans in $N_{\mathbb{R}}$ and $N'_{\mathbb{R}}$, then for each fan $\Delta'$ in $N'_{\mathbb{R}}\times\mathbb{R}_{\ge0}$ satisfying the twin conditions (\ref{inset: two conditions}), we can refine any fan $\Delta$ in $N_{\mathbb{R}}\times\mathbb{R}_{\ge0}$ to a fan $\Delta_{+}$ such that the product map
	$$
	f_{\phi}\times\mathrm{id}:N_{\mathbb{R}}\times\mathbb{R}_{\ge0}\ \lra\  N'_{\mathbb{R}}\times\mathbb{R}_{\ge0}
	$$
induces a morphism of fans $f_{\phi}\times\mathrm{id}:\Delta_{+}\lra\Delta'$. Thus if $\mathcal{S}$ is a system of embeddings for $X$, then each morphism (\ref{system 2}) induces a morphism of locally topologically ringed spaces
	\begin{equation}\label{inset: induced adic trop map}
	\mathrm{Ad}(X,f_{\phi}):\mathrm{Ad}(X,\textit{\i})\lra\mathrm{Ad}(X,\textit{\i}').
	\end{equation}
These morphisms (\ref{inset: induced adic trop map}) give the assignment $\big(\textit{\i}:X\mono Y_{\Sigma}\big)\longmapsto\mathrm{Ad}(X,\textit{\i})$ the structure of a functor
	$$
	\mathrm{Ad}(X,-):\mathcal{S}\lra\bold{LTRS}.
	$$
The inverse limit of the adic tropicalization morphisms (\ref{adic tropicalization morphism}) over $\mathcal{S}$ provides us with a morphism of locally topologically ringed spaces
	\begin{equation}\label{inset: comparison map}
	\varprojlim_{\textit{\i}\in\mathcal{S}}\varpi_{\textit{\i}}\ \!:\ (\ad{X},\mathscr{O}^{\ \!\!\circ}_{\ad{X}})\ \!\xrightarrow{\ \ \ \ \ }\ \!\varprojlim_{\textit{\i}\in\mathcal{S}}\big(\mathrm{Ad}(X,\textit{\i}),\ \!\mathscr{O}_{\mathrm{Ad}(X,\textit{\i})}\big),
	\end{equation}
an adic counterpart to the comparison map (\ref{Berkcompmap}) that appears in the context of Berkovich analytic spaces.
\end{remark}

\vskip .3cm

\begin{theorem}\label{Huberlimit}
Let $X$ be a proper variety over an algebraically closed, non-trivially valued non-Arhimedean field $K$, and let $\mathcal{S}$ be a system of embeddings for $X$. If $\mathcal{S}$ satisfies conditions {\bf (C0)}, {\bf (C1)}, and {\bf (C2)} below, then the adic comparison morphism (\ref{inset: comparison map}) is an isomorphism of locally topologically ringed spaces.
\begin{itemize}
\item[{\bf (C0)}]\vskip .25cm
If $\textit{\i}:X\xymatrix{{}\ar@{^{(}->}[r]&{}}Y_{\Sigma}$ is a closed embedding in $\mathcal{S}$, with $\mathbb{T}$ the dense torus in $Y_{\Sigma}$, then any translation of $\textit{\i}$ by a $K$-point of $\mathbb{T}$ is again in $\mathcal{S}$;
\item[{\bf (C1)}]\vskip .25cm
If $\textit{\i}:X\xymatrix{{}\ar@{^{(}->}[r]&{}}Y_{\Sigma}$ and $\textit{\i}':X\xymatrix{{}\ar@{^{(}->}[r]&{}}Y_{\Sigma'}$ are closed embeddings in $\mathcal{S}$, then their product
$
\textit{\i}\times\textit{\i}':X\xymatrix{{}\ar@{^{(}->}[r]&{}}Y_{\Sigma\times\Sigma'}
$
is also in $\mathcal{S}$;
\item[{\bf (C2)}]\vskip .25cm
There exists a finite affine open cover $\big\{\!\xymatrix{U_{i}\ar@{^{(}->}[r]&X}\!\big\}$ such that for each nonzero regular function $f\in K[U_{i}]$, there is a closed embedding
$
\textit{\i}:X\xymatrix{{}\ar@{^{(}->}[r]&{}}Y_{\Sigma}
$
in $\mathcal{S}$ that realizes $U_{i}$ as the preimage of a torus-invariant open affine, and realizes $f$ as the pullback of a monomial.
\end{itemize}
\end{theorem}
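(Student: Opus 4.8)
The plan is to realize both the source and the target of the adic comparison morphism (\ref{inset: comparison map}) as inverse limits over categories of admissible formal $R$-models of $X$, and to reduce the theorem to a cofinality statement about the subcategory of formal Gubler models cut out by $\mathcal{S}$. Concretely, Proposition \ref{limit} rewrites the source as the inverse limit $\varprojlim_{\bold{AFS}_{R}^{X}}(\mathfrak{X},\mathscr{O}_{\mathfrak{X}})$ in $\bold{LTRS}$ over \emph{all} admissible formal models of $X$, compatibly with the specialization morphisms $\mathrm{sp}_{\mathfrak{X}}$, while Definition \ref{adictrop} rewrites each $\mathrm{Ad}(X,\textit{\i})$ as $\varprojlim_{\bold{REC}_{\Sigma,\textit{\i}}}(\mathfrak{X}_{(\Delta,\textit{\i})},\mathscr{O}_{\mathfrak{X}_{(\Delta,\textit{\i})}})$, so that the target of (\ref{inset: comparison map}) is an iterated limit. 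Assembling the indices into a single category $\mathcal{T}$ --- objects the pairs $(\textit{\i},\Delta)$ with $\textit{\i}\in\mathcal{S}$ and $\Delta\in\bold{REC}_{\Sigma,\textit{\i}}$, morphisms the compatible pairs built from the diagrams (\ref{system 2}) and the refinements $\rho:\Delta'\to\Delta$ --- the assignment $(\textit{\i},\Delta)\mapsto\mathfrak{X}_{(\Delta,\textit{\i})}$ becomes a functor $G:\mathcal{T}\to\bold{AFS}_{R}^{X}$, and (\ref{inset: comparison map}) is precisely the morphism $\varprojlim_{\bold{AFS}_{R}^{X}}(\mathfrak{X},\mathscr{O}_{\mathfrak{X}})\to\varprojlim_{\mathcal{T}}(\mathfrak{X}_{(\Delta,\textit{\i})},\mathscr{O}_{\mathfrak{X}_{(\Delta,\textit{\i})}})$ induced by $G$. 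So it will suffice to prove that $G$ is an \emph{initial} functor and that the additional structure carried by the two limits matches.

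The second step is to unwind initiality of $G$: for every admissible formal model $\mathfrak{X}$ of $X$, the comma category $G\!\downarrow\!\mathfrak{X}$, with objects $\big((\textit{\i},\Delta),\ \mathfrak{X}_{(\Delta,\textit{\i})}\to\mathfrak{X}\big)$, must be nonempty and connected. Nonemptiness is the statement that every admissible formal model of $X$ is dominated by a formal Gubler model $\mathfrak{X}_{(\Delta,\textit{\i})}$ with $\textit{\i}\in\mathcal{S}$; connectedness follows from {\bf (C1)}, since any two dominating Gubler models are joined through a third obtained from a product embedding and a common refinement of the two $\Gamma$-admissible fans (using the refinement construction recalled just after the definition of a system of embeddings). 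Granting initiality, the standard fact that an initial functor computes the same inverse limit, combined with Gillam's completeness theorem for $\bold{LRS}$ \cite[Corollary 5]{Gillam} and the recipe of Remark \ref{remark: Gillam} for lifting such limits to $\bold{LTRS}$, shows that the comparison morphism is an isomorphism of locally ringed spaces; the topology on the structure sheaf of either limit is, by Remark \ref{remark: Gillam}, the finest one making the restriction maps (\ref{individual restrictions}) from the models in its indexing system continuous, and since $G$ is initial these two families of restriction maps are mutually cofinal, so the topologies agree. Finally, the seminorm on each stalk and the compatibility with specialization are built into the adic tropicalization morphisms (\ref{adic tropicalization morphism}) and into the identification of Proposition \ref{limit}, so they transport automatically along the isomorphism.

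The main obstacle is the domination statement, i.e.\ nonemptiness of $G\!\downarrow\!\mathfrak{X}$; this is the adic, integral analogue of the toric-geometric input underlying Theorem \ref{Berkovich}, and it is here that all three conditions enter. Given $\mathfrak{X}$, one reduces by standard properties of admissible formal blow-ups to the case in which $\mathfrak{X}$ is an admissible formal blow-up of $\widehat{\overline{X}}$ for a fixed embedding in $\mathcal{S}$, with center an ideal sheaf supported on the special fibre. Choosing a finite affine cover and applying {\bf (C2)} produces embeddings in $\mathcal{S}$ realizing each chart as the preimage of a torus-invariant affine and realizing each of the finitely many functions cutting out the center as the pullback of a monomial. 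Condition {\bf (C0)} supplies, in addition, the \emph{translated} toric embeddings needed to realize blow-up centers that are not torus-invariant on the special fibre; this ingredient is invisible to the purely topological statement of Theorem \ref{Berkovich}, and it is needed here because the power bounded sheaf $\mathscr{O}^{\circ}_{\ad{X}}$ genuinely remembers the admissible formal blow-up. Condition {\bf (C1)} then lets one pass to a single product embedding $\textit{\i}\in\mathcal{S}$ equipped with a $\Gamma$-admissible fan $\Delta$ --- a common refinement in $N_{\RR}\times\RR_{\ge0}$ of the subdivisions dictated by the blow-up data and by the monomials above --- whose Gubler model $\mathfrak{X}_{(\Delta,\textit{\i})}$ maps to $\mathfrak{X}$. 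Making this precise, i.e.\ translating the Cox-coordinate and toric-blow-up combinatorics behind the proof of Theorem \ref{Berkovich} into the language of $\Gamma$-admissible fans and admissible formal blow-ups over $\mathrm{Spec}_{\ \!}R$, is the technical heart of the proof.
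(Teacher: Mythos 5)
Your proposal follows essentially the same route as the paper: identify $(\ad{X},\mathscr{O}^{\ \!\!\circ}_{\ad{X}})$ with the inverse limit over all admissible formal models via Proposition \ref{limit}, and reduce the theorem to cofinality of the Gubler models indexed by $\mathcal{S}$ in $\bold{AFS}_{R}^{X}$, with the domination statement (the paper's Proposition \ref{majorizing}) as the key technical input. Both you and the paper leave that domination step as the deferred technical heart (the paper refers to \cite{FP} for it), so your outline matches the intended argument.
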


\vskip .3cm

\begin{corollary}
{\bf (Adic limit theorem on point sets).}
If $\mathcal{S}$ satisfies conditions {\bf (C0)} through {\bf (C2)}, then the point set underlying the Huber analytification of $X$ is in natural bijection with the inverse limit of all extended exploded tropicalizations of $X$:
	$$
	|X^{\mathrm{ad}}|
	\ \ \cong\ \ 
	\varprojlim_{\textit{\i}\in\mathcal{S}}\ \mathfrak{Trop}(X,\textit{\i})
	$$
\end{corollary}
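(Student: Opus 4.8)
The plan is to obtain this as a formal consequence of Theorem~\ref{Huberlimit} together with the set-level comparison of Theorem~\ref{theorem: topology on exploded trop}. Since an isomorphism in $\bold{LTRS}$ is in particular a homeomorphism of underlying topological spaces, the hypotheses {\bf (C0)}--{\bf (C2)} make the adic comparison morphism \eqref{inset: comparison map} into a homeomorphism, and hence a bijection of point sets
$$
|X^{\mathrm{ad}}|\ \xrightarrow{\ \sim\ }\ \Bigl|\varprojlim_{\textit{\i}\in\mathcal{S}}\mathrm{Ad}(X,\textit{\i})\Bigr|.
$$
The first step is then to identify the underlying point set of this inverse limit, formed in $\bold{LTRS}$, with the inverse limit $\varprojlim_{\textit{\i}\in\mathcal{S}}|\mathrm{Ad}(X,\textit{\i})|$ formed in the category of sets. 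This follows from the explicit description of inverse limits in $\bold{LTRS}$ recalled in Remark~\ref{remark: Gillam} (one first forms the $\bold{LRS}$ limit, then only retopologizes rings), combined with the fact --- implicit in Proposition~\ref{limit} and its proof --- that an inverse limit of admissible formal models of a fixed adic space has underlying set the inverse limit of the underlying sets; here one uses that $\varprojlim_{\mathcal{S}}\mathrm{Ad}(X,\textit{\i})$ is itself such an iterated limit of models $\mathfrak{X}_{(\Delta,\textit{\i})}$ of $X^{\mathrm{ad}}$, indexed by the category built from $\mathcal{S}$ and the various $\bold{REC}_{\Sigma,\textit{\i}}$.

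Next I would invoke Theorem~\ref{theorem: topology on exploded trop}, which supplies for each object $\textit{\i}:X\mono Y_{\Sigma}$ of $\mathcal{S}$ a natural bijection $\beta_{\textit{\i}}\colon\mathrm{Ad}(X,\textit{\i})\xrightarrow{\ \sim\ }\mathfrak{Trop}(X,\textit{\i})$. What must be verified is that this bijection is \emph{natural} along the morphisms of a system of embeddings: for each commutative triangle \eqref{system 2} in $\mathcal{S}$, with induced toric morphism $f_\phi$ and hence induced map $\mathrm{Ad}(X,f_\phi)$ as in \eqref{inset: induced adic trop map}, the square
$$
\begin{xy}
(0,0)*+{\mathrm{Ad}(X,\textit{\i})}="1";
(46,0)*+{\mathrm{Ad}(X,\textit{\i}')}="2";
(0,-16)*+{\mathfrak{Trop}(X,\textit{\i})}="3";
(46,-16)*+{\mathfrak{Trop}(X,\textit{\i}')}="4";
{\ar@{->}^{\mathrm{Ad}(X,f_\phi)} "1"; "2"};
{\ar@{->}_{\beta_{\textit{\i}}} "1"; "3"};
{\ar@{->}^{\beta_{\textit{\i}'}} "2"; "4"};
{\ar@{->}_{\mathfrak{Trop}(f_\phi)} "3"; "4"};
\end{xy}
$$
commutes, where the bottom arrow is the map of extended exploded tropicalizations sending a point of $|\mathrm{in}_v X|$ inside the $O(\sigma)$-stratum of $Y_\Sigma$ to its image in $|\mathrm{in}_{\phi(v)}X|$ inside the $O(\phi(\sigma))$-stratum of $Y_{\Sigma'}$. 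Granting this, the $\beta_{\textit{\i}}$ assemble into an isomorphism of $\mathcal{S}$-indexed inverse systems of sets, hence a bijection $\varprojlim_{\textit{\i}\in\mathcal{S}}|\mathrm{Ad}(X,\textit{\i})|\xrightarrow{\ \sim\ }\varprojlim_{\textit{\i}\in\mathcal{S}}\mathfrak{Trop}(X,\textit{\i})$, and composing with the bijections of the previous paragraph yields the corollary.

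The main obstacle is precisely the commutativity of that naturality square --- that is, reconciling the specialization morphisms packaged into $\mathrm{Ad}(X,f_\phi)$ with the toric pushforward of initial degenerations. The reduction I have in mind uses Proposition~\ref{exploded lemma 1} to pass to a single $\Gamma$-admissible cone: given $v\in N_{\RR}$, a $\Gamma$-admissible cone $\delta\supset\tau_v$, and the refinement $\Delta_+\to\Delta'$ induced by $f_\phi\times\mathrm{id}$ described in the remark preceding Theorem~\ref{Huberlimit}, one checks that the associated map of $\delta$-tilted algebras induces on special fibers exactly the map $|\mathrm{in}_v X|\to|\mathrm{in}_{\phi(v)}X|$, and that for $\Gamma$-rational $v$ this is controlled by the single ray $\tau_v$ as in the second assertion of Proposition~\ref{exploded lemma 1}. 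This is the functoriality of Gubler's construction from \S\ref{subsection: algebraic Gubler models}, together with the compatibility $\phi(\mathrm{rec}\,\Delta_+)=\mathrm{rec}\,\Delta'=\Sigma'$ of recession fans; once this bookkeeping is carried out, the corollary is purely formal, and all of the genuinely analytic input is absorbed into Theorem~\ref{Huberlimit}.
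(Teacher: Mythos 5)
Your proposal is correct and follows the same route as the paper, which simply deduces the corollary from Theorem \ref{theorem: topology on exploded trop} and Theorem \ref{Huberlimit}; you have merely spelled out the details (underlying sets commuting with the $\bold{LTRS}$ limit, and naturality of the bijections $\beta_{\textit{\i}}$ over $\mathcal{S}$) that the paper absorbs into the word ``immediately.''
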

\begin{proof}
This follows immediately from Theorems \ref{theorem: topology on exploded trop} and \ref{Huberlimit}.
\end{proof}

\vskip .3cm

\begin{remark}
One proves Theorem \ref{Huberlimit} by showing that the system of all Gubler models of $X$, for all closed embeddings $\textit{\i}:X\mono Y_{\Sigma}$ in proper toric embeddings, is cofinal in the system of all admissible formal $R$-models of $X$. The key technical step in the proof is Proposition \ref{majorizing} below. As in the statement of Theorem \ref{Huberlimit}, let $X$ be a proper $K$-variety over an algebraically closed, non-trivially valued non-Arhimedean field $K$, and let $\mathcal{S}$ be a system of embeddings for $X$ satisfying conditions {\bf (C0)},  {\bf (C1)}, and  {\bf (C2)} of Theorem \ref{Huberlimit}.
\end{remark}

\vskip .3cm

\begin{proposition}\label{majorizing}
Every admissible formal $R$-model $\mathfrak{X}$ of $X$ is dominated by a formal Gubler model. More precisely, if $\mathfrak{X}$ is any admissible formal $R$-model of $X$, then there exists
	\begin{itemize}
	\item[{\bf (i)}]\vskip .25cm
	a closed embedding $\textit{\i}:X\mono Y_{\Sigma}$ into the proper toric variety associated to some complete fan $\Sigma$ in some $\RR$-vector space $N_{\RR}$,
	\item[{\bf (i)}]\vskip .25cm
	a $\Gamma$-admissible fan $\Delta$ in $N_{\RR}\times\RR_{\ge0}$ satisfying the twin conditions (\ref{inset: two conditions}),
	\end{itemize}
	\vskip .25cm
such that the formal Gubler model $\mathfrak{X}_{(\textit{\i},\Delta)}$ associated to the pair $(\textit{\i},\Delta)$ admits a proper morphism
	$$
	\mathfrak{X}_{(\textit{\i},\Delta)}
	\ \lra\!\!\!\!\rightarrow\ 
	\mathfrak{X}
	$$
of formal $R$-models of $X$.
\end{proposition}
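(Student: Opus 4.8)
The goal is to establish the cofinality assertion behind the preceding Remark: the formal Gubler models $\mathfrak X_{(\textit{\i},\Delta)}$, with $\textit{\i}\in\mathcal S$ and $\Delta$ a $\Gamma$-admissible fan satisfying the twin conditions (\ref{inset: two conditions}), are cofinal in the category $\bold{AFS}_R^X$. Fix the distinguished finite affine open cover $\{U_i\mono X\}$ furnished by {\bf (C2)}, fix any $\textit{\i}_0\in\mathcal S$ together with a $\Gamma$-admissible $\Delta_0$ as in (\ref{inset: two conditions}), and let $\mathfrak X$ be the given admissible formal $R$-model. By Raynaud's theory of admissible blow-ups \cite{Bosch}, the models $\mathfrak X$ and $\mathfrak X_{(\textit{\i}_0,\Delta_0)}$ are dominated by a common admissible formal $R$-model $\mathfrak X_1$, and, after enlarging $\mathfrak X_1$, we may assume $\mathfrak X_1\to\mathfrak X_{(\textit{\i}_0,\Delta_0)}$ is the admissible blow-up along a coherent open ideal $\mathcal I\subset\mathscr{O}_{\mathfrak X_{(\textit{\i}_0,\Delta_0)}}$. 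Since it suffices to dominate $\mathfrak X_1$, I replace $\mathfrak X$ by $\mathfrak X_1$; and after a further admissible blow-up I arrange, chart by chart, that over each formal chart of the Gubler model lying over $U_i$ the ideal $\mathcal I$ is generated by finitely many functions $f_1,\dots,f_r$ lying in $K[U_i]$.

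I now build a toric embedding that ``sees'' $\mathcal I$. For each of the finitely many pairs $(U_i,f)$ appearing above, condition {\bf (C2)} supplies a closed embedding $\textit{\i}_{i,f}:X\mono Y_{\Sigma_{i,f}}$ in $\mathcal S$ realizing $U_i$ as the preimage of a torus-invariant affine and $f$ as the pullback of a monomial. By {\bf (C1)} the product $\textit{\i}:X\mono Y_\Sigma$ of $\textit{\i}_0$ with all the $\textit{\i}_{i,f}$, where $\Sigma=\Sigma_0\times\prod_{i,f}\Sigma_{i,f}$, again belongs to $\mathcal S$, realizes every $U_i$ as the preimage of a torus-invariant affine of $Y_\Sigma$, and realizes every $f$ above as the restriction of a monomial on the dense torus $\TT\subset Y_\Sigma$; condition {\bf (C0)} lets me further translate $\textit{\i}$ by a generic $K$-point of $\TT$, which I use to keep the closures of $X$ in the ensuing Gubler models flat and well behaved, as in \cite{Gub}. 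Because each such $f$ is now a monomial, its tropicalization is affine-linear on $N_\RR$, so the ideal $\mathcal I$ is encoded by a $\Gamma$-rational polyhedral subdivision of $N_\RR$.

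Since $K$ is algebraically closed and non-trivially valued, $\Gamma$ is dense in $\RR$, so I may choose a locally finite $\Gamma$-admissible fan $\Delta$ in $N_\RR\times\RR_{\ge0}$ with $\mathrm{supp}(\Delta)=N_\RR\times\RR_{\ge0}$ and $\mathrm{rec}(\Delta)=\Sigma$ that is fine enough to refine the subdivision of the previous paragraph, and (by the remark preceding Theorem \ref{Huberlimit}) to admit a morphism of fans down to $\Delta_0$. Then $\Delta\in\bold{REC}_{\Sigma,\textit{\i}}$; by \S\ref{subsection: algebraic Gubler models} and \cite{Gub} the Gubler model $Y_\Delta$ is a flat proper $R$-model of $Y_\Sigma$, the closure $\overline X_\Delta$ of $X$ in it is a flat proper $R$-model of $X$, and $\mathfrak X_{(\textit{\i},\Delta)}=\widehat{\,\overline X_\Delta\,}$. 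By construction this toric subdivision realizes, as a strict transform, the monomial blow-up of the previous step, so $\overline X_\Delta$ dominates $\mathfrak X_1$: on each tilted-algebra chart $U_\delta\subset Y_\Delta$ lying over the torus-invariant affine containing $U_i$, the generators $f_k$ are regular and power bounded, hence the rational map $\overline X_\Delta\dashrightarrow\overline X_1$ that is the identity on $X$ has empty indeterminacy locus and extends to a morphism of $R$-models. A morphism of proper $R$-schemes is proper, and it is birational, hence surjective; completing $\mathfrak m$-adically produces the required proper surjection $\mathfrak X_{(\textit{\i},\Delta)}\ \lra\!\!\!\!\rightarrow\ \mathfrak X$.

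The principal obstacle is the last sentence of the first paragraph: reducing a completely general admissible formal model of $X$ to an admissible blow-up whose centre, over each member of the {\bf (C2)}-cover, is generated by \emph{regular} functions on that member. This is exactly where Raynaud's flattening-and-blow-up machinery must be coordinated with the combinatorics of Gubler's toric $R$-schemes, and it is the formal-model analogue of the mechanism --- ``realize a separating regular function as a monomial'' --- by which {\bf (C2)} drives the proof of Theorem \ref{Berkovich} in \cite{FGP}. Once that reduction is available, the construction of $\textit{\i}$ and of $\Delta$, and the verification of properness and surjectivity, are routine, the latter two being automatic from the properness of $X$.
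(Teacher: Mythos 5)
The paper itself omits the proof of Proposition \ref{majorizing}, deferring to the research article \cite{FP}, so there is no in-text argument to compare yours against; I can only judge your proposal on its own terms. Your overall strategy --- invoke Raynaud's theorem to reduce to dominating an admissible blow-up of a fixed Gubler model, use {\bf (C1)} and {\bf (C2)} to pass to a product embedding in which the local generators of the blow-up center become pullbacks of monomials, and then refine the fan so that the tropicalizations of those monomials cut out a $\Gamma$-admissible subdivision --- is the natural one and is surely close to the intended argument. But, as you yourself concede in your final paragraph, the proposal has a genuine gap exactly where the real work lies: the reduction, at the end of your first paragraph, to a blow-up center generated over each $U_i$ by functions lying in $K[U_i]$. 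The generators of a coherent open ideal $\mathcal{I}$ live in the admissible $R$-algebras of the formal charts, i.e.\ they are elements of $\mathfrak{m}$-adically complete rings, not regular functions on an affine cover of the generic fiber. The missing ingredient is an approximation argument: because $\mathcal{I}$ is open it contains $a\mathscr{O}$ for some nonzero $a\in\mathfrak{m}$, and the image of $K[U_i]$ is dense in the relevant affinoid algebra, so each generator can be perturbed by something of norm at most $|a|$ --- hence without changing the ideal --- so as to land in $K[U_i]$. Writing ``after a further admissible blow-up I arrange\ldots'' does not supply this; no blow-up accomplishes that reduction, and asserting it is not proving it.

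A secondary soft spot: in your third paragraph you obtain the morphism $\overline{X}_\Delta\to\mathfrak{X}_1$ by claiming a rational map ``has empty indeterminacy locus.'' The correct mechanism is the universal property of the admissible blow-up: after refining $\Delta$ so that on each cone $\delta$ one of the monomials $f_k=a_k\chi^{u_k}$ satisfies $\langle u_k,v\rangle+\mathrm{val}(a_k)\,c\le\langle u_j,v\rangle+\mathrm{val}(a_j)\,c$ for all $j$ and all $(v,c)\in\delta$, each ratio $f_j/f_k$ lies in the tilted algebra $R[U_\delta]$, so the pullback of $\mathcal{I}$ to $\overline{X}_\Delta$ is principal on that chart (and invertible by $R$-flatness of the closure), and the universal property of the blow-up then produces the morphism. ``Regular and power bounded'' alone does not give you a map to a blow-up. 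With these two points repaired --- the density/approximation step and the invertibility-plus-universal-property step --- your argument would be essentially complete.
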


\vskip .3cm

To close, we give several examples of systems of embeddings that satisfy the hypotheses of Theorem \ref{Huberlimit}, so that the comparison morphism (\ref{inset: comparison map}) becomes an isomorphism.

\vskip .3cm

\begin{proposition}\label{Nice system, man!}
Let $X$ be a $K$-variety satisfying the two-point condition {\bf (A2)} appearing in \S\ref{Adic tropicalization of a closed embedding} above. Then in each of the following situations, the system $\mathcal{S}$ satisfies conditions {\bf (C0)}, {\bf (C1)}, and {\bf (C2)} of Theorem \ref{Huberlimit}:
\vskip .3cm
\begin{itemize}
\item[{\bf (i)}]
$X$ proper, $\mathcal{S}$ the category of all closed embeddings of $X$ into proper toric varieties;
\vskip .2cm
\item[{\bf (ii)}]
$X$ projective, $\mathcal{S}$ the category of all closed embeddings of $X$ into projective toric varieties;
\vskip .2cm
\item[{\bf (iii)}]
$X$ smooth and proper, $\mathcal{S}$ the category of all closed embeddings of $X$ into smooth, proper toric varieties.
\end{itemize}
\end{proposition}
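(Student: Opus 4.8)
The plan is to verify conditions \textbf{(C0)}, \textbf{(C1)}, and \textbf{(C2)} of Theorem~\ref{Huberlimit} for each of the three systems $\mathcal{S}$ and then invoke that theorem. Conditions \textbf{(C0)} and \textbf{(C1)} are formal and are handled uniformly in all three cases. For \textbf{(C0)}, a translation of $\textit{\i}\colon X\mono Y_{\Sigma}$ by a $K$-point $t$ of the dense torus $\mathbb{T}$ is the composite of $\textit{\i}$ with the variety automorphism $\tau_{t}\colon Y_{\Sigma}\to Y_{\Sigma}$ given by the $\mathbb{T}$-action; since $\tau_{t}$ is an isomorphism, the target $Y_{\Sigma}$ is unchanged, so the translated embedding lies in whichever of the three classes of toric varieties (proper, projective, smooth) $\textit{\i}$ does. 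For \textbf{(C1)}, one uses the canonical identification $Y_{\Sigma\times\Sigma'}\cong Y_{\Sigma}\times_{K}Y_{\Sigma'}$: the product fan $\Sigma\times\Sigma'$ is complete whenever $\Sigma,\Sigma'$ are (clear from supports), is projective whenever both are (the box product of ample bundles is ample), and is smooth whenever both are (a cone $\sigma\times\sigma'$ is generated by part of a $\ZZ$-basis of $N\oplus N'$ as soon as $\sigma$ and $\sigma'$ are each generated by part of a basis). Moreover $\textit{\i}\times\textit{\i}'$ factors as $X\xrightarrow{\ \Delta\ }X\times_{K}X\xrightarrow{\ \textit{\i}\times\textit{\i}'\ }Y_{\Sigma}\times_{K}Y_{\Sigma'}$, and since $X$ is separated the diagonal is a closed immersion while the second arrow is a product of closed immersions, so $\textit{\i}\times\textit{\i}'$ is a closed embedding into a toric variety of the required type.

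The real content is condition \textbf{(C2)}. The starting point is the two-point condition \textbf{(A2)} together with the method of \cite{FGP}: \textbf{(A2)} supplies a finite affine open cover $\{U_{i}\mono X\}$ by very affine opens such that, for each $i$ and each nonzero $f\in K[U_{i}]$, there is a closed embedding $\textit{\i}_{0}\colon X\mono Y_{\Sigma_{0}}$ into some (a priori neither proper nor smooth) toric variety realizing $U_{i}$ as the preimage $\textit{\i}_{0}^{-1}(\mathbb{T}_{0})$ of the dense torus $\mathbb{T}_{0}\subset Y_{\Sigma_{0}}$ and realizing $f$ as the restriction to $U_{i}$ of $\textit{\i}_{0}^{*}\chi^{m}$ for a character $\chi^{m}$. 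The crucial feature to exploit is that the distinguished torus-invariant affine is the \emph{dense torus}, which corresponds to the cone $\{0\}$ and so is never disturbed by any later modification of the fan $\Sigma_{0}$. It then remains to replace $Y_{\Sigma_{0}}$ by a toric variety of the desired type. In case \textbf{(i)} one extends $\Sigma_{0}$ to a complete fan $\Sigma$ containing $\Sigma_{0}$ as a subfan; then $Y_{\Sigma_{0}}\subset Y_{\Sigma}$ is open with the same dense torus, and since $X$ is proper the composite $X\mono Y_{\Sigma_{0}}\hookrightarrow Y_{\Sigma}$ is a proper monomorphism into the separated scheme $Y_{\Sigma}$, hence a closed embedding into the proper toric variety $Y_{\Sigma}$, still realizing $U_{i}$ and $f$. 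In cases \textbf{(ii)} and \textbf{(iii)} one instead views $U_{i}\mono\mathbb{T}_{0}$ as a closed embedding into the torus and produces, using respectively projectivity and smoothness of $X$, a projective (resp.\ smooth and complete) fan $\Sigma$ on $N_{\RR}$ whose associated compactification of $U_{i}\mono\mathbb{T}_{0}$ is the given $X$; the resulting closed embedding $X\mono Y_{\Sigma}$ realizes $U_{i}$ and $f$ as before and lands in a toric variety of the required type.

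The main obstacle is precisely the projective and smooth cases of \textbf{(C2)}. A fan can always be completed while keeping all of its cones, so once one knows (from \cite{FGP}) that the distinguished affine may be taken to be the dense torus, case \textbf{(i)} is routine. But forcing the target to be \emph{projective} or \emph{smooth} requires \emph{refining} the fan, which subdivides its cones and destroys the naive picture in which $Y_{\Sigma_{0}}$ sits as an open subvariety of $Y_{\Sigma}$. The substitute is to exhibit $X$, for the relevant embedding, as the closure of the already-fixed torus-embedding $U_{i}\mono\mathbb{T}_{0}$ inside a suitable projective (resp.\ smooth complete) toric variety --- that is, as a tropical compactification in the sense of Tevelev --- and to check that such a compactification can be chosen to recover $X$ on the nose rather than some toric modification of it. This is where projectivity, respectively smoothness, of $X$ is essential, and arranging it compatibly with the chosen embedding of $U_{i}$ and the function $f$ is the heart of the argument; the remaining bookkeeping --- that refinement never subdivides the cone $\{0\}$, so $U_{i}$ stays the preimage of the dense torus and $f$ the pullback of the same character --- is then immediate, and one concludes by applying Theorem~\ref{Huberlimit}.
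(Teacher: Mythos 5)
Your verifications of \textbf{(C0)} and \textbf{(C1)} are correct (the paper treats these as routine and does not even spell them out), and your argument for \textbf{(C2)} in case \textbf{(i)} --- complete the fan $\Sigma_{0}$ to $\Sigma$, observe that $Y_{\Sigma_{0}}\subset Y_{\Sigma}$ is open with the same torus-invariant affines and characters, and use properness of $X$ to promote the resulting immersion to a closed embedding --- is a legitimate and essentially self-contained route to the proper case. The gap is in cases \textbf{(ii)} and \textbf{(iii)}, which you yourself flag as ``the heart of the argument'' and then do not carry out. Worse, the strategy you sketch for them is not the right one: you propose to exhibit $X$ as the closure of the already-fixed embedding $U_{i}\mono\mathbb{T}_{0}$ inside a projective (resp.\ smooth complete) toric compactification of $\mathbb{T}_{0}$, chosen so that the closure ``recovers $X$ on the nose.'' But the embedding $U_{i}\mono\mathbb{T}_{0}$ is pinned down by the requirement that $f$ pull back from a prescribed character, and the closure of $U_{i}$ in a toric compactification of $\mathbb{T}_{0}$ is whatever compactification of $U_{i}$ the fan dictates; refining the fan replaces that closure by a modification of it, and there is in general no fan for which the closure is isomorphic to the given $X$ compatibly with $U_{i}$ (nor, in the smooth case, any reason the closure is smooth even when the ambient toric variety is). So the step you defer is not bookkeeping; the approach itself needs to change.

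The paper's proof is by citation, and the citations point to a genuinely different mechanism. For case \textbf{(ii)} it invokes Payne's Lemma~4.3 from \cite{P09}: given projective $X$, the affine open $U_{i}$, and $f$, one chooses an ample line bundle and sections adapted to the pair $(U_{i},f)$ and builds a closed embedding into a projective toric variety directly --- no closure of $U_{i}$ is ever taken. For cases \textbf{(i)} and \textbf{(iii)} it invokes the construction in the proof of \cite[Theorem 4.2]{FGP} together with W\l{}odarczyk's Theorem~A \cite{Wlod}: W\l{}odarczyk supplies a globally defined closed embedding of $X$ into a proper (resp.\ smooth and proper) toric variety, and the FGP construction combines that global embedding with auxiliary data adapted to $(U_{i},f)$ --- the point being that once one factor of a product embedding is already a closed immersion into a toric variety of the required type, the combined embedding is too. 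In other words, the product mechanism you verified for \textbf{(C1)} is precisely the device that makes \textbf{(C2)} work in the projective and smooth cases; the ingredient your proposal is missing is the external input (W\l{}odarczyk's embedding theorem, or ampleness) that provides the good global embedding to take the product with.
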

\begin{proof}
Case {\bf (ii)} follows from \cite[Lemma 4.3]{P09}. Cases {\bf (i)} and {\bf (iii)} follow from the construction described in the proof of \cite[Theorem 4.2]{FGP}, using the fact that we can construct the toric embedding with the requisite property in each of these cases, as explained in \cite[Theorem A]{Wlod}.
\end{proof}

\vskip .75cm


\bibliographystyle{acm}
\bibliography{Intro_to_adic_trop}
\vskip .75cm

\end{document}